\newcommand{\subsectionruninhead}{\@startsection{subsection}{2}{0mm}
{-\baselineskip}{-0mm}{\bf\large}}
\newcommand{\subsubsectionruninhead}{\@startsection{subsubsection}{3}{0mm}
{-\baselineskip}{-0mm}{\bf\normalsize}}
\newtheorem*{theorem*}{Theorem}
\newtheorem*{proof*}{Proof}
\newtheorem*{proposition*}{Proposition}
\newtheorem*{corollary*}{Corollary}
\newtheorem*{claim*}{Claim}
\newtheorem{theorem}{Theorem}
\newtheorem{proposition}{Proposition}[section]
\newtheorem{lemma}[proposition]{Lemma}
\newtheorem{claim}[proposition]{Claim}
\theoremstyle{definition}
\newtheorem{definition}[proposition]{Definition}
\theoremstyle{remark}
\newtheorem{remark}[proposition]{Remark}
\numberwithin{equation}{section}
   \def\TT{{\mathbb T}}
\newcommand{\leb}{\operatorname{Leb}}
\newcommand{\supp}{\operatorname{Supp}}
\newcommand{\diff}{\operatorname{Diff}}
\newcommand{\id}{\operatorname{Id}}
\newcommand{\dd}{\operatorname{d}}
\newcommand{\orb}{\operatorname{Orb}}
\newcommand{\width}{\operatorname{Wd}}
\begin{document}
\title{A robustly transitive diffeomorphism of Kan's type}
\author{CHENG CHENG, SHAOBO GAN AND YI SHI}
\date{\today}
\maketitle
\begin{abstract}
We construct a family of partially hyperbolic skew-product diffeomorphisms on $\mathbb{T}^3$ that are robustly transitive and admitting two physical measures with intermingled basins. In particularly, all these diffeomorphisms are not topologically mixing. Moreover, for every such example, it exhibits a dichotomy under perturbation: every perturbation of such example either has a unique physical measure and is robustly topologically mixing, or has two physical measures with intermingled basins.
\end{abstract}

\section{Introduction}\label{sec:introduction}

Let $M$ be a compact Riemannian manifold of dimension $d$, and $f:M\rightarrow M$ be a diffeomorphism. We call an $f$-invariant Borel probability measure $\mu$ a \emph{physical measure} if the set $B(\mu)$ defined as
\begin{displaymath}
B(\mu)=\{x\in M:\lim_{n\rightarrow\infty}\frac{1}{n}\sum_{i=0}^{n-1}\varphi(f^i(x))=\int\varphi d\mu,\textrm{for all}\ \varphi\in C^0(M)\}
\end{displaymath}
has positive Lebesgue measure. The set $B(\mu)$ is called the \emph{basin of $\mu$}. These notions were introduced by Sinai in \cite{S}, where it was proved the existence of physical measures for Anosov diffeomorphisms. Subsequently, in \cite{R} and \cite{BR}, Bowen and Ruelle showed that for hyperbolic diffeomorphisms and flows, there exist a finite number of physical measures whose basins are all essentially open (i.e., open module a set of null Lebesgue measure), and the union of these basins cover a full Lebesgue measure subset of the ambient manifold. For non-hyperbolic diffeomorphisms, in \cite{P} Palis conjectured that every system can be approximated by systems with a finite number of physical measures whose basins unite to cover a full Lebesgue measure subset of the whole manifold. In this term, as a positive step, in \cite{BV} and \cite{ABV}, Alves, Bonatti and Viana proved the existence and finiteness of physical measures for a large class of non-hyperbolic diffeomorphisms, namely partially hyperbolic systems with mostly contracting or mostly expanding center.

For hyperbolic systems, basins of physical measures are essentially open. However, the boundaries of the basins are often fractal sets (e.g., see \cite{DGOY}). In \cite{K}, Kan constructed for the first time examples of partially hyperbolic endomorphisms defined on the 2-dimensional cylinder with exactly two physical measures whose basins are \emph{intermingled}. Here we say two physical measures $\mu$ and $\nu$ have \emph{intermingled basins} or their basins are \emph{intermingled} if for any open set $U$ in the manifold, we have $\leb(B(\mu)\cap U)>0$ and $\leb(B(\nu)\cap U)>0$.

Kan's examples robustly admit two physical measures with intermingled basins within boundary preserving systems. His construction was extended by Ilyashenko, Kleptsyn and Saltykov\cite{IKS}. Kan also constructed partially hyperbolic examples on the thickened torus with two physical measures whose basins are intermingled. Then in \cite{KS}, Kleptsyn and Saltykov proved that the boundaries of basins of physical measures in Kan's examples have Hausdorff dimension strictly smaller than the dimension of the phase space.

One would ask whether there exist more than two physical measures with their basins intermingled. In \cite{DVY}, Dolgopyat, Viana and Yang constructed examples on $\mathbb{T}^2\times\mathbb{S}^2$ admitting arbitrarily finitely many physical measures with intermingled basins. And recently, Bonatti and Potrie (\cite{BP}) give examples on $\mathbb{T}^3$ with arbitrarily finitely many physical measures whose basins are intermingled. Their examples are partially hyperbolic but not strongly partially hyperbolic, that is, the center stable bundle $E^{cs}$ can not be split to two invariant subbundles, which is different from the previous examples.

It is well known that Kan's examples can be extended to boundaryless manifolds such as $\mathbb{T}^3$, following the same arguments. In this case, however, Ures and Vasquez proved recently in \cite{UV} that Kan's examples can not be robust. In fact, they proved for $C^r(r\geq2)$-partially hyperbolic, dynamically coherent diffeomorphisms of $\mathbb{T}^3$ with compact center leaves, if two physical measures are intermingled, then they must both support in periodic tori which are $s,u$-saturated. Thus these systems can not be accessible, and the number of physical measures with intermingled basins can not be larger than two. Recall that in \cite{BHHTU}, Burns, Herts, Herts, Talitskaya and Ures proved that accessibility property is $C^1$ open and $C^{\infty}$ dense among partially hyperbolic diffeomorphisms with one-dimensional center. This implies that partially hyperbolic diffeomorphisms on $\mathbb{T}^3$ with intermingled basins can not form a $C^r$ open set.

It's also a natural question to ask whether Kan's examples can be transitive or robustly transitive. In Chapter 11 of \cite{BDV}, it is showed that with two more assumptions, Kan's examples on 2-dimensional cylinder can be transitive. And recently, in \cite{O}, Okunev proved that for the set of $C^r$ partially hyperbolic diffeomorphisms which are skew products over transitive Anosov diffeomorphisms and fibered by $\mathbb{S}^1$, there exists a residual subset $\mathcal{R}$, such that for every $f\in\mathcal{R}$ either it is transitive or its non-wandering set has zero Lebesgue measure.

In this paper, we append some extra constructions to Kan's examples on $\mathbb{T}^3$, which lead them to be robustly transitive. First, we need to give the precise definitions of Kan's examples on $\mathbb{T}^2\times[0,1]$ and $\mathbb{T}^3=\mathbb{T}^2\times \mathbb{S}^1$, where in this paper $\mathbb{S}^1$ will be identified with $\mathbb{R}/2\mathbb{Z}$. These definitions were abstracted from Kan's original examples (see \cite{BDV}), which will guarantee the existence of two physical measures with intermingled basins. In this paper, we will use ${\diff^{1+}(M)}$ to denote the set of diffeomorphisms on $M$ with H$\rm\ddot{o}$lder derivative.

\begin{definition}\label{def:Kan's example with boundary}
Let $M=\mathbb{T}^2\times[0,1]$. $K\in\diff^{1+}(M)$ is called a \emph{Kan's example} if it is a skew product diffeomorphism $K(x,\theta)=(\mathbb{A}x,\phi(x,\theta))$, where $\mathbb{A}:\mathbb{T}^2\rightarrow\mathbb{T}^2$ is a hyperbolic automorphism, and $\phi:M\rightarrow[0,1]$ is $C^{1+}$, satisfying the following conditions:
\begin{itemize}
\item[(K1)] For every $x\in\mathbb{T}^2,\phi(x,0)=0$ and $\phi(x,1)=1.$
\item[(K2)] For $r,s\in\mathbb{T}^2$, fixed points of $\mathbb{A}$, the map $\phi(r,\cdot)$ (resp. $\phi(s,\cdot)$) is Morse--Smale and has exactly two fixed points, a source (resp. sink) at $\theta=1$ and a sink (resp. source) at $\theta=0$.
\item[(K3)] For every $(x,\theta)\in M$, $\parallel \mathbb{A}^{-1}\parallel^{-1}<|\partial_{\theta}\phi(x,\theta)|<\parallel \mathbb{A}\parallel.$
\item[(K4)] $\int_{\mathbb{T}^2}\log|\partial_{\theta}\phi(x,0)|dx<0$ and $\int_{\mathbb{T}^2}\log|\partial_{\theta}\phi(x,1)|dx<0$.
\end{itemize}
The set of $C^{1+}$ Kan's examples on $\mathbb{T}^2\times[0,1]$ will be denoted by $\mathscr{K}^{1+}(\mathbb{T}^2\times[0,1]).$
\end{definition}

For the boundaryless case, we have naturally the following definition (see also \cite{UV}).

\begin{definition}\label{def:Kan's example on boundaryless manifolds}
Let $M=\mathbb{T}^3=\mathbb{T}^2\times \mathbb{S}^1$. $K\in\diff^{1+}(M) $ is called a \emph{Kan's example} if it is a skew product diffeomorphism $K(x,\theta)=(\mathbb{A}x,\phi(x,\theta))$, where $\mathbb{A}:\mathbb{T}^2\rightarrow\mathbb{T}^2$ is a hyperbolic automorphism, and $\phi:M\rightarrow\mathbb{S}^1$ is $C^{1+}$, satisfying the following conditions:
\begin{itemize}
\item[(K1)] For every $x\in\mathbb{T}^2,\phi(x,0)=0$ and $\phi(x,1)=1.$
\item[(K2)] For $r,s\in\mathbb{T}^2$, fixed points of $\mathbb{A}$, the map $\phi(r,\cdot)$ (resp. $\phi(s,\cdot)$) is Morse--Smale and has exactly two fixed points, a source (resp. sink) at $\theta=1$ and a sink(resp. source) at $\theta=0$.
\item[(K3)] For every $(x,\theta)\in M$, $\parallel \mathbb{A}^{-1}\parallel^{-1}<|\partial_{\theta}\phi(x,\theta)|<\parallel \mathbb{A}\parallel.$
\item[(K4)] $\int_{\mathbb{T}^2}\log|\partial_{\theta}\phi(x,0)|dx<0$ and $\int_{\mathbb{T}^2}\log|\partial_{\theta}\phi(x,1)|dx<0$.
\end{itemize}
The set of $C^{1+}$ Kan's examples on $\mathbb{T}^3$ will be denoted by $\mathscr{K}^{1+}(\mathbb{T}^3).$
\end{definition}

Noting that Kan's aim is to show the existence of intermingle property, we may generalize Kan's examples as in \cite{UV} to the following Kan--like diffeomorphisms.

\begin{definition}[Kan--like diffeomorphisms]\label{def:Kan-lian diffeo}
Let $M$ be some compact Riemannian manifold. A $C^{1+}$-partially hyperbolic diffeomorphism $f:M\rightarrow M$ is called a \emph{Kan--like diffeomorphism} if it admits at least two hyperbolic physical measures such that their basins are intermingled. The set of Kan--like diffeomorphisms is denoted by $\mathscr{K}_l^{1+}(M)$.
\end{definition}

We have mentioned that a beautiful description of Kan--like diffeomorphisms on $\TT^3$ has been given in \cite{UV}. They proved that every partially hyperbolic Kan--like diffeomorphism on $\TT^3$ must admit two $s,u$-saturated periodic tori as supports of physical measures.

Our example is $C^{\infty}$ smooth, and can be got by an arbitrarily small perturbation of $\mathbb{A}\times (-\id_{\mathbb{S}^1})$, where $\mathbb{A}$ is a hyperbolic automorphism on $\mathbb{T}^2$, and $-\id_{\mathbb{S}^1}$ is an orientation reversing isometry on $\mathbb{S}^1=\mathbb{R}/2\mathbb{Z}$, which admits $0$ and $1(\equiv-1)$ as two fixed points.

\begin{theorem}\label{theo:main theorem}
Let $\mathbb{A}:\mathbb{T}^2\rightarrow\mathbb{T}^2$ be a hyperbolic automorphism. Then there exists a $C^{\infty}$-arc $\{K_t\}_{t\in[0,1]}$ of $C^{\infty}$-diffeomorphisms on $\mathbb{T}^3=\mathbb{T}^2\times \mathbb{S}^1$, such that
\begin{enumerate}
\item $K_0=\mathbb{A}\times (-\id_{\mathbb{S}^1})$.
\item For every $t\in(0,1]$, $K_t\in\mathscr{K}^{1+}(\mathbb{T}^3)$ has two physical measures with intermingled basins and is $C^1$-robustly transitive, but not topologically mixing.
\end{enumerate}

Moreover, for every $t\in(0,1]$, there exists a $C^1$-neighborhood $\mathcal{U}_t$ of $K_t$, such that for every  $g\in\mathcal{U}_t\cap\diff^{1+}(\mathbb{T}^3)$,
\begin{itemize}
\item either $g$ has two physical measures with intermingled basins and $g\in\mathscr{K}_l^{1+}(\mathbb{T}^3)$,
\item or $g$ has a unique physical measure and $g$ is robustly topologically mixing.
\end{itemize}
\end{theorem}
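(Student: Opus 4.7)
I would build the arc explicitly as a skew-product perturbation of $K_0$, verify the four axioms (K1)--(K4), quote the classical Kan argument for intermingled basins, then prove robust transitivity and the dichotomy separately.

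Write $K_t(x,\theta)=(\mathbb{A}x,\phi_t(x,\theta))$ with $\phi_t(x,\theta)=-\theta+t\,h(x,\theta)$, where $h\in C^\infty(\mathbb{T}^3,\RR)$ vanishes on $\mathbb{T}^2\times\{0,1\}$ (ensuring (K1) for all $t$) and is supported near the fibers over the two hyperbolic fixed points $r,s$ of $\mathbb{A}$. By tuning $\partial_\theta h$ at the four distinguished points $(r,0),(r,1),(s,0),(s,1)$ I can arrange the Morse--Smale fixed-point structure of (K2), namely $|\partial_\theta\phi_t(r,0)|<1<|\partial_\theta\phi_t(r,1)|$ and the reverse at $s$. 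For $t$ small, $|\partial_\theta\phi_t|=|-1+t\partial_\theta h|$ lies in a neighborhood of $1$, so (K3) holds as soon as $\|\mathbb{A}\|$ is large enough. For (K4) I would localize $h$ so that on $\mathbb{T}^2\times\{0\}$ the set where $|\partial_\theta\phi_t(x,0)|<1$ has much larger Lebesgue measure than the set where it exceeds $1$, forcing $\int_{\mathbb{T}^2}\log|\partial_\theta\phi_t(x,0)|\,dx<0$; symmetrically on $\mathbb{T}^2\times\{1\}$. With (K1)--(K4) in hand, the classical Kan argument from \cite{BDV}, adapted to $\mathbb{T}^3$ in \cite{UV}, yields two physical measures $\mu_0,\mu_1$ supported on $T_i=\mathbb{T}^2\times\{i\}$ with intermingled basins whose union has full Lebesgue measure; I would cite this without reproving it.

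The robust transitivity is the main obstacle. The point of starting from $-\id_{\mathbb{S}^1}$ rather than the identity is that $K_t$ swaps the two open cylinders $C_+=\mathbb{T}^2\times(0,1)$ and $C_-=\mathbb{T}^2\times(-1,0)$ whose common boundary is $T_0\cup T_1$, so no proper forward-invariant open set can sit inside a single cylinder. My plan is then to show that the strong unstable foliation $\cF^u$ of $K_t$ is minimal on the closure of each cylinder: the density of $\mathbb{A}$-unstable leaves in $\mathbb{T}^2$ combined with the nontrivial $u$-holonomy in the $\theta$-direction coming from the perturbation $h$ should yield density of $\cF^u$-leaves in each $\overline{C_\pm}$, and the cylinder swap upgrades this to density on $\mathbb{T}^3$. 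For $C^1$-robustness I would invoke the standard technique producing $C^1$-open minimality of strong unstable foliations of partially hyperbolic skew products over Anosov, in the spirit of Bonatti--Diaz and the accessibility literature \cite{BHHTU}. This step, and especially producing $\cF^u$-minimality in an open set of parameters, is where I expect the bulk of the technical effort to concentrate.

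Non-mixing is then immediate: $K_t$ permutes $C_+$ and $C_-$, so no odd iterate of an open $U\subset C_+$ can meet $U$. For the dichotomy under $g\in\cU_t\cap\diff^{1+}(\mathbb{T}^3)$, I would invoke the Ures--Vasquez classification quoted in the introduction. If $g$ admits two intermingled physical measures, they must be supported on two $s,u$-saturated periodic tori, necessarily $C^1$-close to $T_0,T_1$, placing $g$ in $\mathscr{K}_l^{1+}(\mathbb{T}^3)$; this is the first alternative. Otherwise the two invariant tori do not survive, so the cylinder decomposition is destroyed and the $\cF^u$-minimality of the previous step propagates from each cylinder to all of $\mathbb{T}^3$; hence $g$ is robustly topologically mixing, and the mostly-contracting-center condition (K4), being $C^1$-open, together with Alves--Bonatti--Viana gives a unique physical measure. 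Both alternatives are $C^1$-open, yielding the required neighborhood $\cU_t$.
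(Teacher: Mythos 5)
Your overall outline (construct a Kan skew product satisfying (K1)--(K4), glue along two invariant tori, observe the cylinder swap forces non-mixing, appeal to Ures--Vasquez for the dichotomy) tracks the paper in broad strokes, and the non-mixing observation is exactly right. But the heart of the theorem is robust transitivity, and there your plan has a genuine gap.

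You propose to establish robust transitivity by proving that the strong unstable foliation $\cF^u$ of $K_t$ is minimal on (the closure of) each cylinder $C_\pm$ and then letting the cylinder swap spread that density. This cannot work. The two Kan tori $T_0=\mathbb{T}^2\times\{0\}$ and $T_1=\mathbb{T}^2\times\{1\}$ are $K_t$-invariant and $u$-saturated, so each is a compact proper $\cF^u$-saturated subset of $\overline{C_\pm}$; hence $\cF^u$ is not minimal there, and in fact the paper's own Corollary insists the examples admit $su$-tori precisely so that neither strong foliation is minimal. More fundamentally, $\cF^u$-minimality alone would not give transitivity: you also need backward-spreading, i.e.\ density of a stable object, and the stable manifold of the natural reference saddle $P=(p,0.5)$ is only one-dimensional (in the fiber over $p$, $P$ is a source, so its stable index is $1$). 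A one-dimensional stable manifold has no reason to be dense in $\mathbb{T}^3$ by transversality, and nothing in (K1)--(K4) forces it to be.

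The missing idea is the blender-horseshoe. The paper inserts, by an extra and fairly delicate perturbation supported in a homoclinic box around $p$, a blender-horseshoe $\Lambda_t\subset\mathbb{T}^2\times(0,1)$ with reference saddles $P$ and $O$, together with a hyperbolic fixed point $Q$ in the fiber $\bar q$ whose unstable manifold crosses the superposition region. The blender makes $W^s(\Lambda_t)$ behave, robustly, as if it were one dimension larger: $W^u(Q_g)$ must meet $W^s_{loc}(\Lambda_{t,g})$ for all nearby $g$, which by the $\lambda$-lemma gives $W^s(Q_g,g^2)\subset\overline{W^s(P_g,g^2)}$. Since $Q,Q'$ are sinks in the normally hyperbolic fiber $\bar q$, one has $\overline{W^s(\orb(Q_g),g)}=\overline{W^s(\bar q_g,g)}=\mathbb{T}^3$ by Hirsch--Pugh--Shub, and chaining through the blender gives $\overline{W^s(\orb(P_g),g)}=\mathbb{T}^3$ robustly. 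Together with the (easy, HPS-based) robust density of $W^u(\orb(P_g),g)=\overline{W^u(\bar p_g,g)}$, Lemma~7.3 of \cite{BDV} gives robust transitivity. None of this is available in your skew-product perturbation $\phi_t(x,\theta)=-\theta+t\,h(x,\theta)$ supported only near $r,s$: without the tailored blender construction near $p$ and the activating saddle over $q$, the stable manifold of $P$ simply isn't dense. For the dichotomy you would also want to replace the appeal to Alves--Bonatti--Viana (which gives finiteness, not uniqueness) by the paper's argument that a unique physical measure forces at least one Kan torus to break, and that a broken torus, again fed through the blender, upgrades density of $W^s(P_g)$ and $W^u(P_g)$ to all of $\mathbb{T}^3$, whence robust mixing.
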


\begin{remark}
A diffeomorphism is $C^r$-robustly transitive if it has a $C^r$-neighborhood consisting of transitive diffeomorphisms. All the known examples of $C^1$-robustly transitive diffeomorphisms are topologically mixing. So our example is the first $C^1$-robustly transitive diffeomorphism that are not topologically mixing, which gives a negative answer to a question in \cite{AC}.
\end{remark}
\begin{remark}
We would like to point out here that if $f\in\mathscr{K}^{1}(\mathbb{T}^3)$ preserves the orientation of the center fibers, then $f$ can not be robustly chain transitive. To see this, notice that there are two open sets $U=\mathbb{T}^2\times(-1,0)$ and $V=\mathbb{T}^2\times(0,1)$ that are both $f$-invariant. Define $\mathcal{X}$ to be a Morse--Smale vector field on $\mathbb{S}^1$ with a unique sink at $\theta=-0.5$ and a unique source at $\theta=0.5$. Then $f_t=(\id_{\mathbb{T}^2}\times\mathcal{X}_t)\circ f$ is a family of diffeomorphisms isotopic to $f$, such that $\overline{f_t(U)}\subset U$ for every $t>0$. This means that $U$ is a trapping region of $f_t$ for every $t>0$, and it follows that $f_t$ is not chain transitive.
\end{remark}

To prove Theorem~\ref{theo:main theorem}, we will firstly give the constructions in Section~\ref{sec:construction}, and then prove the corresponding properties via a sequence of propositions in Section~\ref{sec:proofs}. In Proposition~\ref{prop:robust transitivity}, we prove the examples we constructed are robustly transitive. In Proposition~\ref{prop:two physical measures}, we prove that if the perturbed system has two physical measures, then their basins are intermingled. And in Proposition~\ref{prop:unique physical measure}, we prove that if the perturbed system has a unique physical measure, then the system is robustly topologically mixing. We should point out here that instead of proving Proposition~\ref{prop:two physical measures}, we will actually prove the following theorem which is more general than Proposition~\ref{prop:two physical measures}.

\begin{theorem}\label{theo:two-pm}
For any $f\in\mathscr{K}^{1+}(\mathbb{T}^3)$, there exists a $C^1$-neighborhood ${\cal U}$ of $f$, such that for any $g\in{\cal U}\cap\diff^{1+}(\mathbb{T}^3)$, if $g$ has more than one physical measures, then $g\in\mathscr{K}_l^{1+}(\mathbb{T}^3)$.
\end{theorem}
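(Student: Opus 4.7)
The plan is to combine the robustness of the partial hyperbolicity / mostly-contracting-center framework, which pins down all physical measures of $g$, with a density and absolute-continuity argument on the stable foliation, which forces intermingling. After shrinking $\mathcal{U}$, every $g\in\mathcal{U}$ is strongly partially hyperbolic with splitting $E^s_g\oplus E^c_g\oplus E^u_g$ uniformly close to that of $f$. Since $\mathcal{F}^c_f$ is a normally hyperbolic foliation with compact $\mathbb{S}^1$ leaves, Hirsch--Pugh--Shub gives dynamical coherence for $g$: it admits foliations $\mathcal{F}^c_g,\mathcal{F}^{cs}_g,\mathcal{F}^{cu}_g$ whose center leaves are $C^1$ circles $C^0$-close to $\{x\}\times\mathbb{S}^1$. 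By $C^1$-continuity of $\log\|Dg|_{E^c_g}\|$ and upper semicontinuity of the Lyapunov exponents, (K4) propagates to $g$: any $g$-invariant measure whose support lies in a small tubular neighborhood of $\mathbb{T}^2\times\{0\}$ (resp. $\mathbb{T}^2\times\{1\}$) has strictly negative center Lyapunov exponent. Hence $g$ has mostly contracting center in the Bonatti--Viana sense, and by Alves--Bonatti--Viana $g$ admits only finitely many physical measures $\mu_1,\ldots,\mu_k$ with $\bigcup_i B(\mu_i)$ of full Lebesgue measure; each $\supp\mu_i$ is $\mathcal{F}^u_g$-saturated; and each $B(\mu_i)$ is $\mathcal{F}^s_g$-saturated modulo a null set, because Birkhoff averages of continuous functions are constant along uniformly contracting stable leaves.

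Suppose now that $k\geq 2$. If $g$ were accessible, then accessibility combined with mostly contracting center (Burns--Wilkinson, Dolgopyat) would yield a unique physical measure, contradicting $k\geq 2$. Hence $g$ is not accessible, and the Ures--Vasquez analysis of non-accessible partially hyperbolic diffeomorphisms on $\mathbb{T}^3$ with compact center leaves forces the supports of the $\mu_i$ to lie inside $s,u$-saturated periodic tori; close to $f$ these can only be continuations of $\mathbb{T}^2\times\{0\}$ and $\mathbb{T}^2\times\{1\}$. Consequently at least two of the $\mu_i$, call them $\mu_0$ and $\mu_1$, are supported on two distinct perturbed tori $T_0^g$ and $T_1^g$.

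To conclude I must show $\leb(B(\mu_j)\cap U)>0$ for every open $U\subset\mathbb{T}^3$ and $j=0,1$, which gives $g\in\mathscr{K}_l^{1+}(\mathbb{T}^3)$. Since $T_j^g$ is $\mathcal{F}^s_g$-saturated and its intrinsic stable foliation is a $C^1$-perturbation of the dense foliation $W^s_{\mathbb{A}}$, the Pesin stable disks of $\mu_j$-typical points fill a full-Lebesgue subset of a tubular neighborhood of $T_j^g$ in the $\mathcal{F}^{cs}_g$-direction, obtained by iterating the negative center exponent. Pulling back by $g^{-n}$ and invoking the uniform domination of the splitting together with the absolute continuity of the stable holonomy, one propagates this to a positive Lebesgue intersection of $B(\mu_j)$ with every open set of $\mathbb{T}^3$.

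The hardest step will be this last one: upgrading ``$B(\mu_j)$ has positive Lebesgue measure and is $\mathcal{F}^s_g$-saturated'' to ``$B(\mu_j)$ meets every open set of $\mathbb{T}^3$ in positive Lebesgue measure''. This requires a delicate Pesin-theoretic argument valid at $C^{1+}$ regularity, combined with the $s,u$-saturation of $T_j^g$, so as to rule out the scenario that the stable manifolds of $\mu_j$-typical points confine to one ``side'' of the two tori rather than spread over all of $\mathbb{T}^3$.
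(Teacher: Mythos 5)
Your overall architecture (mostly contracting center $\Rightarrow$ finitely many physical measures, then pin down their supports as two $su$-tori, then deduce intermingling from density of the strong stable foliation) parallels the paper's, but the middle step has a real problem. You invoke Ures--Vasquez to deduce that the supports must lie in $s,u$-saturated periodic tori ``because $g$ is non-accessible''. The Ures--Vasquez theorem as cited in this paper runs in the opposite direction: it takes \emph{intermingled basins} as hypothesis and concludes that the measures sit on $su$-tori. Since intermingling is precisely what you are trying to prove, this is circular. There is no stated form of Ures--Vasquez that converts non-accessibility alone into the $su$-torus conclusion. There is also a regularity mismatch: Ures--Vasquez requires $C^r$, $r\geq 2$, while the theorem must hold for every $g\in\mathcal{U}\cap\diff^{1+}(\mathbb{T}^3)$.

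The paper replaces this step with a self-contained argument tied to the Kan structure, in particular to property (K2). Lemma~\ref{lem:trans-inters} guarantees that for some uniform $L$, every strong unstable plaque $W^{uu}_L(X,g)$ transversally hits $W^s_L(R_{0,g},g)$ or $W^s_L(S_{1,g},g)$, and this is $C^1$-robust. Combined with the $\lambda$-lemma and the disjointness of $\supp(\mu_1),\supp(\mu_2)$, one first shows each center fiber meets each support in exactly one point (Claim~\ref{claim:unique intersection}), hence each support is a topological torus transverse to the center foliation; then a second application of the same ingredients shows the supports are $s$-saturated. This bypasses accessibility and Ures--Vasquez entirely and works at $C^{1+}$ regularity; it is also why the paper's remark after the theorem notes the proof relies on (K2) and does not extend to all Kan-like diffeomorphisms. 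Finally, your intermingling step is left as a wish (``the hardest step will be this last one''), whereas the paper gives the actual mechanism: forward-iterate any small $u$-disk $\gamma$ until it crosses $W^s_{loc}(R_{0,g},g)$ and $W^s_{loc}(S_{1,g},g)$, use absolute continuity of the Pesin stable lamination of each $\mu_i$ near its supporting torus, and pull back by $g$-invariance of the basins. You should fill that in, or adopt the paper's route.
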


\begin{remark}
Generally, this theorem doesn't work for $f\in\mathscr{K}_l^{1+}(\mathbb{T}^3)$ since our proof of Theorem~\ref{theo:two-pm} relies on property (K2) in the definition of Kan's examples.
\end{remark}

Another byproduct of our main result is the following corollary.

\begin{corollary*}
There exists a partially hyperbolic skew product diffeomorphism $g\in\diff^{\infty}(\mathbb{T}^3)$ such that $g$ is robustly topologically mixing, but neither the stable nor the unstable foliation of $g$ is minimal.
\end{corollary*}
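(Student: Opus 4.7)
The plan is to produce the example as an explicit $C^\infty$ skew-product perturbation of some $K_t$ from Theorem~\ref{theo:main theorem} (with $t>0$ chosen small) that preserves both fixed tori $T_0=\mathbb{T}^2\times\{0\}$ and $T_1=\mathbb{T}^2\times\{1\}$ while destroying the physical measure on $T_1$. Writing $K_t(x,\theta)=(\mathbb{A}x,\phi(x,\theta))$ and setting $\lambda_1:=\int_{\mathbb{T}^2}\log|\partial_\theta\phi(x,1)|\,dx<0$ (by (K4)), I would pick a $C^\infty$ orientation-preserving diffeomorphism $h\colon\mathbb{S}^1\to\mathbb{S}^1$ fixing $0$ and $1$, with $h'(0)=1$ and $h'(1)$ slightly above $\exp(-\lambda_1)$, and define
\[
g := K_t\circ(\id_{\mathbb{T}^2}\times h),\qquad g(x,\theta)=(\mathbb{A}x,\phi(x,h(\theta))).
\]
This $g$ is a $C^\infty$ skew product over $\mathbb{A}$ for which $T_0$ and $T_1$ remain invariant, and (for $h$ close to the identity) it is partially hyperbolic.

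Since $K_0=\mathbb{A}\times(-\id_{\mathbb{S}^1})$ is an isometry on each center fiber, $|\lambda_1(K_t)|\to 0$ as $t\to 0$, so the lower bound $\exp(-\lambda_1)$ on $h'(1)$ is arbitrarily close to $1$ for $t$ small. Thus $h$ can be taken $C^1$-close to the identity, guaranteeing $g\in\mathcal{U}_t$, where $\mathcal{U}_t$ is the $C^1$-neighborhood of $K_t$ furnished by Theorem~\ref{theo:main theorem}. By the chain rule the center Lyapunov exponent of $g$ along $T_1$ with respect to $\leb_{\mathbb{T}^2}\times\delta_1$ equals $\lambda_1+\log h'(1)>0$; Pesin theory then implies that the stable set of $T_1$ has zero Lebesgue measure, so $T_1$ cannot support a physical measure of $g$, while $T_0$ retains its negative center exponent and continues to support a hyperbolic physical measure. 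If $g$ had two physical measures, Theorem~\ref{theo:two-pm} combined with the Ures--Vasquez characterization \cite{UV} of Kan--like diffeomorphisms on $\mathbb{T}^3$ would force their supports to be $s,u$-saturated periodic tori close to $T_0$ and $T_1$, which in this skew-product setting can only be $T_0$ and $T_1$ themselves, contradicting the absence of a physical measure on $T_1$. Hence $g$ has a unique physical measure, and the dichotomy in Theorem~\ref{theo:main theorem} places $g$ in the second alternative: $g$ is robustly topologically mixing.

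For non-minimality, observe that along each $T_i$ the splitting $E^s_g\oplus E^u_g$ is tangent to the $\mathbb{T}^2$-direction, since $Dg$ preserves $TT_i=T\mathbb{T}^2$ and acts on it by $\mathbb{A}$ (with the partial hyperbolicity domination still holding when $h$ is close to the identity). Consequently, for every $x\in T_i$ the strong stable and strong unstable leaves $W^s_g(x)$ and $W^u_g(x)$ are contained in $T_i\subsetneq\mathbb{T}^3$ and hence are not dense. Neither the stable nor the unstable foliation of $g$ is therefore minimal. The main obstacle is the simultaneous requirement that $g\in\mathcal{U}_t$, forcing $h$ to be $C^1$-small, while the center exponent along $T_1$ crosses zero, forcing $h'(1)$ to exceed a threshold depending on $\lambda_1(K_t)$; this tension is resolved by shrinking $t$ so that $|\lambda_1(K_t)|$ becomes negligible.
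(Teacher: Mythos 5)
Your approach is genuinely different from the paper's, and it has a fatal flaw. The paper keeps $T_0$ as an $su$-torus but \emph{breaks} $T_1$ (in the sense of Proposition~\ref{prop:break invariant torus}), and then runs the second half of the argument of Proposition~\ref{prop:unique physical measure} to get robust density of $W^s(P_g)$ and $W^u(P_g)$. You instead keep both tori intact as $su$-tori but make the center Lyapunov exponent on $T_1$ positive, hoping to lose the physical measure there and fall into the ``unique physical measure'' alternative of Theorem~\ref{theo:main theorem}.

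The problem is that a diffeomorphism whose Gibbs $u$-state on the $u$-saturated torus $T_1$ has positive center exponent cannot have mostly contracting center (Proposition~\ref{prop:mcc equivalent property}), and the neighborhoods $\mathcal{U}_t$ of Theorem~\ref{theo:main theorem} (and likewise the neighborhood $\mathcal{U}$ of Theorem~\ref{theo:two-pm}) are chosen, via Lemma~\ref{lem:openness of mcc}, precisely so that every $C^{1+}$ element has mostly contracting center; otherwise the dichotomy ``exactly one or exactly two physical measures'' could not even be stated. So your $g$ is forced to lie \emph{outside} $\mathcal{U}_t$, and the dichotomy does not apply to it. Shrinking $t$ does not resolve this: the neighborhood $\mathcal{U}_t$ is fixed once $t$ is fixed, and its defining constraint (mostly contracting center) is violated by any $g$ with a positive center exponent along $T_1$, no matter how $C^1$-small the perturbation $h$ is. Consequently the claimed conclusion ``$g$ has a unique physical measure, hence is robustly mixing'' is unjustified; indeed, without mostly contracting center you have not even shown that $g$ has any physical measure, nor can you invoke the Ures--Vasquez description, which in the paper is applied only inside the mcc neighborhood. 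By contrast, the paper's argument does not need to make $T_1$ repelling at all: it applies the perturbation of Proposition~\ref{prop:break invariant torus} to break the $su$-torus structure of $T_1$ while remaining inside $\mathcal{U}_t$ (hence keeping mcc), which automatically rules out a second physical measure by Theorem~\ref{theo:two-pm} and feeds directly into the mixing argument of Proposition~\ref{prop:unique physical measure}. Your observation that both $T_0$ and $T_1$ remain $su$-tori and hence neither foliation is minimal is correct, but the mixing half of the statement is not established by your construction.
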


\begin{remark}\label{remark}
Since $g$ in the above corollary is a skew product diffeomorphism over a linear Anosov automorphism of the torus, it must have normally hyperbolic periodic center compact leaves. Moreover, $g$ can also be constructed such that it preserves the orientation of all three invariant bundles of $g$'s partially hyperbolic splitting. However, for an open dense subset of such kind systems, Theorem 1.6 of \cite{BDU} showed that both the stable and the unstable foliations are robustly minimal.
\end{remark}

Notice that Kan constructed in \cite{K} an endomorphism on the cylinder admitting two physical measures with intermingled basins. One can define Kan's map on the cylinder like Definition \ref{def:Kan's example with boundary} (see \cite{BDV}). Recently, Gan and Shi \cite{GS} proved that all these Kan's maps are robustly transitive in the setting of boundary preserving case.

~

\noindent\emph{Outline of the paper}: Our main results are all stated in Section~\ref{sec:introduction}. In Section~\ref{sec:pre}, we recall some materials that will be necessary for the proofs. In Section~\ref{sec:construction}, we show how to construct the examples we need. The core step of this section is the construction of blender-horseshoes, which is kind of technical and complicated. The readers could skip this part for the first reading, because we will summarize the main properties of our examples at the beginning of Section~\ref{sec:proofs}. Finally in the remaining of Section~\ref{sec:proofs}, we will give the proofs of our results.

\section{Preliminaries}\label{sec:pre}

In this section, we will give some definitions and notations that will be used in this paper. Firstly, we will introduce the definition of Gibbs $u$-states, which was introduced in \cite{PS}, and has been proved to be a powerful tool for studying physical measures. Then, we will define a special class of diffeomorphisms, now called systems with mostly contracting center. These systems are firstly introduced and studied by Bonatti and Viana\cite{BV}. In a word, they have shown the existence and finiteness of physical measures for such systems.

Secondly, we will give a brief introduction to blender-horseshoes and blenders. The conception of blenders is firstly introduced by Bonatti and D\'iaz \cite{BD1}, and it has been proved to be a remarkable idea and a powerful tool for proving persistence of cycles and transitivity (see \cite{BD1,BD2}). Then in \cite{BD3}, they raise another conception, called blender-horseshoes, and prove it to be a special class of blenders.

\subsection{Gibbs \boldmath{$u$-}states and systems with mostly contracting center}
Given a compact Riemannian manifold $M$, a diffeomorphism $f:M\rightarrow M$ is called \emph{partially hyperbolic} if the tangent bundle admits a dominated splitting $TM=E^s\oplus E^c\oplus E^u,$ such that $Df|_{E^s}$ is uniformly contracting, $Df|_{E^u}$ is uniformly expanding and $Df|_{E^c}$ lies between them:
\begin{displaymath}
\parallel Df|_{E^s(x)}\parallel<\parallel Df^{-1}|_{E^c(f(x))}\parallel^{-1},\quad \parallel Df|_{E^c(x)}\parallel<\parallel Df^{-1}|_{E^u(f(x))}\parallel^{-1},\quad\textrm{for all}\ x\in M.
\end{displaymath}

The \emph{stable bundle} $E^s$ and \emph{unstable bundle} $E^u$ are automatically integrable: there exists a unique foliation $\mathcal{F}^u$ (resp. $\mathcal{F}^s$) tangent to $E^u$ (resp. $E^s$) at every point. $\mathcal{F}^u$ and $\mathcal{F}^s$ are called \emph{unstable foliation} and \emph{stable foliation}, respectively. A subset $\Lambda$ of $M$ is called \emph{$u$-saturated} (resp. \emph{$s$-saturated}) if it consists of entire strong unstable (resp. strong stable) leaves. Any compact disk embedded in a leaf of $\mathcal{F}^u$ is called a \emph{$u$-disk}.

An $f$-invariant probability measure $\mu$ is called a \emph{Gibbs $u$-state} if the conditional probabilities of $\mu$ along $\mathcal{F}^u$ are absolutely continuous with respect to the Lebesgue measures on the leaves.

The following proposition summarizes the basic properties of Gibbs $u$-states.

\begin{proposition}[\cite{BDV,PS}]\label{prop:gibbs ustate basic properties}
\begin{itemize}
\item[(1)] For every $u$-disk $D$, every accumulation point of the sequence of probability measures
$$\mu_n=\frac{1}{n}\sum_{i=0}^{n-1}f^i_*\left(\frac{m_D}{m_D(D)}\right)$$
is a Gibbs $u$-state, where $m_D$ is the Lebesgue measure restricted to the disk $D$.
\item[(2)] The support of every Gibbs $u$-state is $u$-saturated.
\item[(3)] Every ergodic component of a Gibbs $u$-state is still a Gibbs $u$-state.
\item[(4)] For Lebesgue almost every point $x$ in any $u$-disk, every accumulation point of $\frac{1}{n}\sum_{i=0}^{n-1}\delta_{f^i(x)}$ is a Gibbs $u$-state.
\item[(5)] Every physical measure is a Gibbs $u$-state. Conversely, every ergodic Gibbs $u$-state whose center Lyapunov exponents are all negative is a physical measure.
\end{itemize}
\end{proposition}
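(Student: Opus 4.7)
The plan is to prove the five items in order, since each builds on the previous ones, relying on the classical bounded distortion property along unstable leaves together with the disintegration theorem.

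For item (1), I would first note that each iterate $f^i_*(m_D/m_D(D))$ is supported on the $u$-disk $f^i(D)$ and absolutely continuous with respect to its Lebesgue measure. The key input is a standard bounded distortion estimate: for any two points $y,z$ on the same local unstable plaque, the ratio of the Jacobians $\prod_{j=0}^{n-1}|\det Df|_{E^u(f^j(y))}| / \prod_{j=0}^{n-1}|\det Df|_{E^u(f^j(z))}|$ is uniformly bounded using $C^{1+}$-H\"older regularity of $E^u$ and of $Df|_{E^u}$. This yields uniformly bounded densities of $f^i_*m_D$ on plaques of $f^i(D)$, and a partition-of-unity / disintegration argument shows that any weak-$*$ limit of the Cesaro averages $\mu_n$ disintegrates into densities along local unstable leaves that are absolutely continuous with respect to Lebesgue, hence is a Gibbs $u$-state. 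Item (2) then follows because on any foliation box, the conditional probabilities of a Gibbs $u$-state are strictly positive on any open subplaque intersecting $\supp \mu$, and combined with $f$-invariance of $\mathcal{F}^u$ one concludes $\supp\mu$ contains every unstable leaf through any of its points. Item (3) is obtained by disintegrating $\mu$ into its ergodic components: since the disintegration of $\mu$ along $\mathcal{F}^u$ refines further into the ergodic disintegration, almost every ergodic component inherits the absolute continuity of conditionals along $\mathcal{F}^u$.

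For item (4) the strategy is to apply item (1) to the restriction of Lebesgue to the $u$-disk $D$ and then use the Birkhoff ergodic theorem for the resulting limit Gibbs $u$-state $\mu$. Since $\mu$ is absolutely continuous along unstable plaques, its conditional measures are equivalent to $m_D$ on a positive-measure piece of $D$. By decomposing $\mu$ into ergodic components, each of which is itself a Gibbs $u$-state by item (3), Birkhoff's theorem applied componentwise shows that for $\mu$-almost every $x$, and hence for $m_D$-almost every $x$ in the plaque neighborhood of $\supp\mu$ intersected with $D$, the empirical averages $\tfrac{1}{n}\sum\delta_{f^i(x)}$ converge to an ergodic Gibbs $u$-state. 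Exhausting $D$ by such plaque pieces (using that the Gibbs $u$-states one can obtain by varying subsequences are all absolutely continuous along $\mathcal{F}^u$) yields the claim for a.e.\ $x\in D$.

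For item (5), the forward implication is a direct consequence of (4): any physical measure $\nu$ has basin $B(\nu)$ of positive Lebesgue measure, and by Fubini there is some $u$-disk $D$ where $B(\nu)\cap D$ has positive $m_D$-measure, so item (4) forces $\nu$ to be a Gibbs $u$-state. The converse is the Pesin-theoretic part and will be the main obstacle. Given an ergodic Gibbs $u$-state $\mu$ with all center Lyapunov exponents negative, Pesin theory furnishes, on a full-$\mu$-measure set, local stable manifolds tangent to $E^{cs}=E^c\oplus E^s$ of definite size on a set of positive $\mu$-measure. The plan is to combine (a) absolute continuity of the strong-unstable holonomy between transversals (applied along $\mathcal{F}^u$, using the $C^{1+}$ hypothesis), and (b) absolute continuity of Pesin's center-stable holonomy along the stable lamination of $\mu$, to conclude that the set of points whose forward orbit has Birkhoff averages equal to $\int\varphi\,d\mu$ contains a positive Lebesgue measure set. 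The main technical difficulty is setting up the Pesin charts and proving the absolute continuity of the center-stable holonomy with the required uniformity on Pesin blocks; once that is established, a saturation argument with strong-unstable holonomy (which is absolutely continuous by the $C^{1+}$ assumption on $Df|_{E^u}$) completes the proof.
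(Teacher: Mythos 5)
The paper does not prove this proposition at all: it is stated as a citation to \cite{BDV,PS}, where the standard theory of Gibbs $u$-states (going back to Pesin--Sinai) is developed. So there is no in-paper argument to compare against, and your sketch must be judged on its own.

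Items (1), (2) and the forward half of (5) follow the standard line (bounded distortion along unstable plaques, positivity and absolute continuity of conditionals, Fubini plus item (4) for the forward implication of (5)). For the converse of (5) you correctly identify the Pesin-theoretic core --- absolute continuity of the Pesin center-stable holonomy over a Pesin block, combined with $u$-holonomy, to show the Birkhoff-typical set of $\mu$ has positive Lebesgue measure; that is the right route.

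Item (4), however, has a genuine gap. You apply item (1) to obtain a Gibbs $u$-state $\mu$ as a Cesaro limit of $m_D/m_D(D)$ and then invoke Birkhoff for $\mu$. But Birkhoff yields convergence of empirical measures for $\mu$-almost every $x\in M$, whereas item (4) requires a statement for $m_D$-almost every $x\in D$; these measure classes are not comparable. Being a Gibbs $u$-state only means that the \emph{conditional} measures of $\mu$ along $\mathcal{F}^u$ are equivalent to leaf Lebesgue on some plaques, but $\mu$ may give zero mass to the particular disk $D$, and the relation between $m_D$-generic points of $D$ and $\mu$-generic points of $M$ is precisely the nontrivial content of item (4). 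The standard proof (see \cite{BDV}, around Lemma~11.12--11.13) does \emph{not} route through Birkhoff on the limit measure: it shows directly, using bounded distortion on iterated sub-plaques of $D$ together with a covering/Chebyshev argument, that the set of $x\in D$ whose empirical measures admit an accumulation point outside the closed convex set of Gibbs $u$-states has $m_D$-measure zero. You would need this kind of quantitative estimate on the disk itself; the ergodic-theorem-plus-change-of-measure-class shortcut does not close. Relatedly, the phrase that the ``disintegration of $\mu$ along $\mathcal{F}^u$ refines further into the ergodic disintegration'' in item (3) is not literally correct (the two disintegrations are along different partitions and neither refines the other); the usual proof shows instead that the Gibbs $u$-state condition is a face condition on the simplex of invariant measures, so it passes to ergodic components. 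The conclusion of (3) is right but the stated mechanism is not.
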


We say a $C^{1+}$-partially hyperbolic diffeomorphism $f:M\rightarrow M$ has \emph{mostly contracting center} if for any $u$-disk $D^u$, we have
\begin{displaymath}
\limsup_{n\rightarrow\infty}\frac{1}{n}\log\parallel Df^n(x)|_{E^c(x)}\parallel<0
\end{displaymath}
for every $x$ in a subset $D_0^u\subset D^u$ with positive Lebesgue measure.

\begin{proposition}[\cite{A,VY}]\label{prop:mcc equivalent property}
A diffeomorphism $f$ has mostly contracting center if and only if the center Lyapunov exponents of all ergodic
Gibbs $u$-states of $f$ are negative.
Moreover, if $f$ has mostly contracting center, then
\begin{itemize}
\item there exist finitely many ergodic Gibbs $u$-states for $f$, and each of them is an ergodic physical measure.
\item the supports of the ergodic Gibbs $u$-states of $f$ are pairwise disjoint.
\end{itemize}
\end{proposition}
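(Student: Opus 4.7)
The plan is to leverage the properties of Gibbs $u$-states in Proposition~\ref{prop:gibbs ustate basic properties}, combined with absolute continuity of unstable holonomy, to connect the center Lyapunov spectrum of ergodic Gibbs $u$-states with the pointwise behavior on arbitrary $u$-disks. For the forward direction (mostly contracting center implies every ergodic Gibbs $u$-state has negative center exponent), I would fix an ergodic Gibbs $u$-state $\mu$ and, using part~(2), pick a $u$-disk $D\subset \supp\mu$. Absolute continuity of the conditional of $\mu$ along $\mathcal{F}^u$ combined with the ergodic theorem produces a Lebesgue-positive subset $D_1\subseteq D$ on which $\tfrac{1}{n}\log\|Df^n|_{E^c}\|$ converges to the top center exponent $\lambda^c(\mu)$. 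The mostly contracting hypothesis supplies another Lebesgue-positive set $D_0\subseteq D$ where the corresponding limsup is strictly negative. To conclude $\lambda^c(\mu)<0$, one transports $D_0$ onto $D_1$ by forward iteration and the absolute continuity of unstable holonomy; since the limsup along orbits is invariant under both $f$ and (modulo Lebesgue-null transport) holonomy, the two conditions must be compatible on a positive-Lebesgue set.

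For the reverse direction, fix an arbitrary $u$-disk $D$. By part~(4), Lebesgue-a.e.\ $x\in D$ has empirical averages $\tfrac{1}{n}\sum_{i=0}^{n-1}\delta_{f^i(x)}$ accumulating on Gibbs $u$-states, and by part~(3) these decompose into ergodic Gibbs $u$-states, each of negative center exponent by hypothesis. A subadditive/Fatou-type argument then yields $\limsup\tfrac{1}{n}\log\|Df^n(x)|_{E^c}\|<0$ on a Lebesgue-positive subset of $D$, establishing mostly contracting center. For the finiteness claim in the ``moreover'' part, by part~(5) each ergodic Gibbs $u$-state with negative center exponent is an ergodic physical measure; its basin captures a definite Lebesgue-positive share of every $u$-disk near its support (again via absolute continuity of the unstable conditional), and since basins of distinct physical measures are pairwise disjoint, only finitely many can coexist on $M$. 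For the disjointness of supports, I would argue that if two such supports intersected, then $u$-saturation of both supports together with the local product structure of the corresponding basins would force a Lebesgue-positive overlap of two disjoint basins, a contradiction.

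The main obstacle is the forward direction of the equivalence: the ``good'' sets $D_0$ and $D_1$ extracted respectively from mostly contracting center and from $\mu$-genericity are each only of positive Lebesgue measure and need not intersect directly on $D$, so one must bridge them using the absolute continuity of the unstable holonomy together with $f$-invariance of the limsup in a careful iteration scheme. This is precisely where the $C^{1+}$ regularity enters (via absolute continuity of holonomy) and constitutes the technical heart of the arguments of Andersson and Viana--Yang on which the statement rests.
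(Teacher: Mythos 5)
The paper does not prove this proposition; it is cited verbatim from Andersson~\cite{A} and Viana--Yang~\cite{VY}, so there is no in-paper argument to compare against. I will therefore assess your sketch on its own terms.

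The forward direction contains a genuine gap. You propose to reconcile the set $D_0$ (of positive Lebesgue measure, supplied by the mostly contracting hypothesis) with the $\mu$-generic set $D_1$ by ``transport by forward iteration and the absolute continuity of unstable holonomy,'' on the grounds that the limsup is holonomy-invariant. This is not so: two points on the same strong unstable leaf separate exponentially under forward iteration, so their forward center cocycle growth rates are a priori unrelated. Strong unstable holonomy preserves \emph{backward} asymptotics; it does not transport forward Lyapunov data. The fact that $\lim_n \tfrac{1}{n}\log\|Df^n|_{E^c}\|$ is constant on a Lebesgue-large subset of a $\mu$-typical unstable plaque comes from ergodicity of $\mu$ plus the Gibbs $u$-state structure, not from any invariance under unstable holonomy, and you need to say this explicitly. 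In fact your detour is avoidable: the conditional densities of a Gibbs $u$-state along unstable plaques are H\"older and bounded away from $0$ and $\infty$ on the support (Pesin--Sinai), so the conditionals are \emph{equivalent} to Lebesgue, not merely absolutely continuous. Consequently the $\mu$-generic set has \emph{full} Lebesgue measure on $\mu$-a.e.\ unstable plaque in $\supp\mu$. Choosing a $u$-disk $D$ inside such a plaque, the mostly contracting hypothesis gives $D_0\subset D$ of positive Lebesgue measure with negative limsup, which must meet the full-measure $\mu$-generic set; at any such point the limit equals $\lambda^c(\mu)$, forcing $\lambda^c(\mu)<0$ with no transport argument.

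The ``moreover'' part is also under-specified. Pairwise disjointness of basins does not by itself bound their number; you need a \emph{uniform} lower bound on the Lebesgue proportion of an arbitrary $u$-disk absorbed by each basin, which comes from the uniform size of Pesin stable manifolds on a uniformly large measure set together with absolute continuity of the stable holonomy and the equivalence of conditionals above. For disjointness of supports, the sketch is in the right spirit, but you should phrase it via $u$-saturation of supports (Proposition~\ref{prop:gibbs ustate basic properties}(2)) plus the same local product structure, rather than ``local product structure of the basins,'' which is not a notion you have established.
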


Proposition~\ref{prop:mcc equivalent property} gives the equivalent characterization of systems with mostly contracting center. Next lemma shows that mostly contracting systems are $C^1$ open.

\begin{lemma}[\cite{Y}, Theorem~B]\label{lem:openness of mcc}
Let $M$ be some compact Riemannian manifold. Assume $f$ is any $C^{1+}$-partially hyperbolic diffeomorphism on $M$ having mostly contracting center, then there exists a $C^1$-neighbourhood $\mathcal{U}$ of $f$, such that every $g\in\mathcal{U}\cap\diff^{1+}(M)$ also has mostly contracting center.
\end{lemma}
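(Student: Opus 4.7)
My plan is to argue by contradiction, leveraging the characterization of mostly contracting center from Proposition~\ref{prop:mcc equivalent property}: $f$ has mostly contracting center if and only if every ergodic Gibbs $u$-state of $f$ has negative top center Lyapunov exponent. If the lemma fails, there exists a sequence $g_n \to f$ in the $C^1$ topology with $g_n \in \diff^{1+}(M)$, and for each $n$ an ergodic Gibbs $u$-state $\mu_n$ of $g_n$ whose top center exponent $\lambda^c_{\max}(\mu_n)$ is non-negative. Passing to a subsequence, I may assume $\mu_n \to \mu$ in the weak-$*$ topology, and the goal becomes to extract from $\mu$ an ergodic Gibbs $u$-state of $f$ with non-negative top center exponent.

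First I would verify that the weak-$*$ limit $\mu$ is itself a Gibbs $u$-state for $f$. Since $f$ is partially hyperbolic, so are all $g_n$ for large $n$, and the strong unstable bundles $E^u_{g_n}$ together with the local strong unstable plaques of $g_n$ depend continuously on $g$ in the $C^1$-topology, by the standard stability of the strongest normally hyperbolic foliation. The H\"older regularity of $Dg_n|_{E^u_{g_n}}$ gives the classical infinite-product expression for the conditional densities of $\mu_n$ along $g_n$-unstable plaques, with uniform bounds and modulus of continuity in $n$. Passing to the limit in this formula yields absolute continuity of the disintegration of $\mu$ along local $f$-unstable plaques, so $\mu$ is a Gibbs $u$-state for $f$.

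Next I would control the top center Lyapunov exponent of $\mu$. By subadditivity of the cocycle $\log\|Dg_n^N|_{E^c_{g_n}}\|$, for every integer $N \geq 1$,
\[
\frac{1}{N}\int \log\bigl\|Dg_n^N|_{E^c_{g_n}}\bigr\|\,d\mu_n \;\geq\; \lambda^c_{\max}(\mu_n)\;\geq\;0.
\]
Because $g_n \to f$ in $C^1$ and $E^c_{g_n} \to E^c_f$ uniformly on $M$, the integrand converges uniformly to $\log\|Df^N|_{E^c_f}\|$, and weak-$*$ convergence of $\mu_n$ to $\mu$ gives $\frac{1}{N}\int \log\|Df^N|_{E^c_f}\|\,d\mu \geq 0$ for every fixed $N$. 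Letting $N \to \infty$ and invoking Kingman's subadditive ergodic theorem yields $\int \lambda^c_{\max}(x)\,d\mu(x) \geq 0$, so $\mu$ has an ergodic component $\nu$ with $\lambda^c_{\max}(\nu) \geq 0$. By Proposition~\ref{prop:gibbs ustate basic properties}(3), $\nu$ is an ergodic Gibbs $u$-state for $f$, contradicting the characterization of $f$ in Proposition~\ref{prop:mcc equivalent property}.

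The main obstacle is the first step — showing that the weak-$*$ limit of Gibbs $u$-states is again a Gibbs $u$-state under a $C^1$-perturbation of the dynamics. The strong unstable foliation is transversally only H\"older, and the density formula involves an infinite product of ratios of unstable Jacobians whose convergence relies on H\"older control of $Df|_{E^u}$; this is exactly where the $C^{1+}$ hypothesis on each $g_n$ becomes indispensable. The subtlety is that one perturbs in the $C^1$-topology but must retain uniform H\"older-type estimates along the sequence $g_n$ in order to pass to the weak-$*$ limit inside the density formula and conclude that $\mu$ disintegrates absolutely continuously along $f$-unstable plaques.
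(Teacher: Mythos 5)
The paper does not prove this lemma; it is cited verbatim from \cite{Y} (Theorem~B), so there is no paper proof to compare against. Your overall strategy — contradict, pass to a weak-$*$ limit of ergodic Gibbs $u$-states with non-negative top center exponent, then invoke Proposition~\ref{prop:mcc equivalent property} — is the natural first attempt, and your second step (subadditivity plus Kingman to push the non-negativity of $\lambda^c_{\max}$ through the limit) is correct. The gap is exactly where you flag it, and it is not a removable subtlety but the crux of the theorem: you assert that the conditional density formula for $\mu_n$ holds ``with uniform bounds and modulus of continuity in $n$,'' but this is false under $C^1$ convergence. Each $g_n$ is individually $C^{1+}$, so each $Dg_n|_{E^u_{g_n}}$ is H\"older, but the H\"older exponent and constant are allowed to degenerate as $n\to\infty$; $C^1$-closeness to $f$ gives no control on them. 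Consequently the infinite-product expression for the unstable conditional densities $\rho_n=\prod_{k\geq 1} J^u g_n(g_n^{-k}x)/J^u g_n(g_n^{-k}y)$ need not be uniformly convergent or equicontinuous along the sequence, and you cannot pass to the limit to conclude that $\mu$ is a Gibbs $u$-state for $f$. In other words, upper semicontinuity of the set of Gibbs $u$-states in the diffeomorphism variable is standard in a $C^{1+}$ (bounded-H\"older) neighbourhood but is precisely what fails to be elementary in the $C^1$ topology.

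The argument in \cite{Y} avoids the density formula entirely and instead uses the entropy characterization of Gibbs $u$-states (Ledrappier--Young type: an invariant measure is a Gibbs $u$-state iff its partial entropy along the unstable foliation equals $\int\log|\det Df|_{E^u}|$), together with upper semicontinuity of the partial unstable entropy jointly in the measure (weak-$*$) and the diffeomorphism ($C^1$). Entropy along expanding foliations only sees first-derivative data, so it is robust under $C^1$ perturbation in a way the H\"older density estimates are not; this is what makes the lemma a theorem rather than a routine compactness argument. To repair your proof you would need to replace your first step with this entropy-based passage to the limit (or some other mechanism that does not rely on uniform H\"older control along the sequence $g_n$).
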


\subsection{\boldmath{Blender-horseshoes and $cu$-}blenders}
Suppose $f:M\rightarrow M$ is a $C^1$-diffeomorphism, where $M$ is an $n$-dimensional compact Riemannian manifold, $n\geq3$. In this subsection, we write $n=n_s+1+n_u,$ where $n_s,n_u\geq1.$ Consider a $C^1$-embedding $\varphi: D^s\times[-1,1]\times D^u\rightarrow M$, where $D^s=[-1,1]^{n_s}$ and $D^u=[-1,1]^{n_u}$. Denote by $\Gamma$ the image $\varphi(D^s\times[-1,1]\times D^u)$. In $\Gamma$, we take the coordinate $(x_s,x_c,x_u), x_s\in D^s, x_c\in[-1,1], x^u\in D^u.$ In the manifold $M$, we take a metric $\parallel\cdot\parallel$ that induces in the cube $\Gamma$ the product of the usual euclidean metrics in $D^s,[-1,1]$ and $D^u$. Denote four specific parts of $\partial \Gamma$ as follows:
\begin{displaymath}
\begin{array}{rcl}
\partial^u\Gamma&=&D^s\times\partial([-1,1]\times D^u).\\
\partial^{uu}\Gamma&=&D^s\times[-1,1]\times\partial D^u.\\
\partial^{ss}\Gamma&=&(\partial D^s)\times[-1,1]\times D^u.\\
\partial^c\Gamma&=&D^s\times\{-1,1\}\times D^u.
\end{array}
\end{displaymath}

Given a $k$-dimensional plane $\Pi\subset\mathbb{R}^n$ and $\varepsilon>0$, define the \emph{$\varepsilon$-cone around $\Pi$} by $C_{\varepsilon}(\Pi)=\{u\in \mathbb{R}^n: u=v+w,w\in\Pi,w\in\Pi^{\bot},\parallel w\parallel\leq\varepsilon\parallel v\parallel\}$.

For $\varepsilon>0,$ define $\mathcal{C}^{uu}_{\varepsilon},\mathcal{C}^u_{\varepsilon}$ and $\mathcal{C}^{ss}_{\varepsilon}$ to be  conefields of size $\varepsilon$ around the tangent space of the families of disks $\{x_s,x_c\}\times D^u$, $\{x_s\}\times[-1,1]\times D^u$ and $D^s\times\{x_c,x_u\}$, respectively.

Now fix an $\varepsilon>0$. We say that an $n_u$-disk(resp. $n_s$-disk) $\Delta$ in $\Gamma$ is a \emph{vertical disk}(resp. horizontal disk) if $\Delta$ is tangent to $\mathcal{C}^{uu}_{\varepsilon}$(resp. $\mathcal{C}^{ss}_{\varepsilon}$) and its boundary is contained in $\partial^{uu}\Gamma$(resp. $\partial^{ss}\Gamma$).

Let $\Delta\subset \Gamma$ be a horizontal disk such that $\Delta\cap\partial^u\Gamma=\varnothing$. Then there are two different homotopy classes of vertical disks through $\Gamma$ disjoint from $\Delta$. A vertical disk that does not intersect $\Delta$ is \emph{at the right of $\Delta$} if it is in the homotopy class of $\{0_s\}\times\{1\}\times D^u$. Otherwise we say that it is \emph{at the left of $\Delta$}.

Let $\Delta$ be some horizontal disk. A \emph{vertical strip} (\emph{at the right of $\Delta$}) is defined to be an embedding $\Phi:[0,1]\times D^u\rightarrow\Gamma$ such that for every $t\in[0,1]$, $\Phi(\{t\}\times D^u)$ is a vertical disk (at the right of $\Delta$) and $\Phi([0,1]\times D^u)$ is tangent to $\mathcal{C}^{u}_{\varepsilon}$. We denote $\Phi([0,1]\times D^u)$ by $\mathcal{S}$. The width of a vertical strip $\mathcal{S}$, denoted by $\width(\mathcal{S})$, is defined to be the minimum of the length of arcs contained in $\mathcal{S}$ and connecting $\Phi(\{0\}\times D^u)$ and $\Phi(\{1\}\times D^u)$.

\begin{definition}[Blender-horseshoes]\label{def:blenderhorseshoe}
We call $\Lambda=\cap_{i\in\mathbb{Z}}f^i(\Gamma)$, the maximal invariant set in $\Gamma$, a \emph{blender-horseshoe} if the following conditions hold(with respect to some local coordinate system)
\begin{itemize}
\item[(1)] $f(\Gamma)\cap(\mathbb{R}^s\times\mathbb{R}\times[-1,1]^u)$ consists of two components, denoted by $f(\tilde{\Gamma}_1)$ and $f(\tilde{\Gamma}_2)$. Moreover, $f(\tilde{\Gamma}_1)\cup f(\tilde{\Gamma}_2)\subset(-1,1)^s\times\mathbb{R}\times[-1,1]^u$, and $(\tilde{\Gamma}_1\cup\tilde{\Gamma}_2)\cap\partial^{uu}\Gamma=\varnothing.$
\item[(2)] There exists $\varepsilon_0>0$ such that the conefields $\mathcal{C}^{uu}_{\varepsilon_0},\mathcal{C}^u_{\varepsilon_0}$ and $\mathcal{C}^{ss}_{\varepsilon_0}$ are $Df$-invariant. That is, for $x\in f(\tilde{\Gamma}_1)\cup f(\tilde{\Gamma}_2)$, $Df^{-1}(\mathcal{C}^{ss}_{\varepsilon_0}(x))\subsetneqq \mathcal{C}^{ss}_{\varepsilon_0}(f^{-1}x)$, and for $x\in\tilde{\Gamma}_1\cup\tilde{\Gamma}_2$, $Df(\mathcal{C}^{uu}_{\varepsilon_0}(x))\subsetneqq \mathcal{C}^{uu}_{\varepsilon_0}(f(x))$ and $Df(\mathcal{C}^{u}_{\varepsilon_0}(x))\subsetneqq \mathcal{C}^{u}_{\varepsilon_0}(f(x))$. Moreover, $\mathcal{C}^{ss}_{\varepsilon_0}$ and $\mathcal{C}^{u}_{\varepsilon_0}$ are uniformly contracting and expanding under $Df$, respectively.
\item[(3)] Denote by $\Gamma_i=f^{-1}(f(\tilde{\Gamma}_i)\cap\Gamma), i=1,2.$ Then $\Gamma_1\cup\Gamma_2$ is disjoint from $\partial^u\Gamma$ and $f(\Gamma_1)\cup f(\Gamma_2)$ is disjoint from $\partial^{ss}\Gamma$. It can be deduced that $\{\Gamma_1,\Gamma_2\}$ forms a Markov partition of $\Lambda$. Denote by $P_1$ and $P_2$ to be the unique hyperbolic fixed points contained in $\Gamma_1$ and $\Gamma_2$, respectively.
\item[(4)] For any $x\in\Lambda$, define $W^{\sigma}_{loc}(x)$ to be the connected component of $W^{\sigma}(x)\cap \Gamma$ containing $x$, where $\sigma=s,u,uu$. Then for any vertical disks $\Delta$ and $\Delta'$ such that $\Delta\cap W^s_{loc}(P_1)\neq\varnothing$ and $\Delta'\cap W^s_{loc}(P_2)\neq\varnothing$, we have $\Delta\cap\partial^c\Gamma=\Delta'\cap\partial^c\Gamma=\Delta\cap\Delta'=\varnothing$.
\item[(5)] If $\Delta$ is a vertical disk at the right (resp. left) of $W^s_{loc}(P_i)$, then $f(\Delta\cap\Gamma_i)$ is a vertical disk at the right (resp. left) of $W^s_{loc}(P_i)$, where $i=1,2$. If $\Delta$ is at left of $W^s_{loc}(P_1)$ or $\Delta\cap W^s_{loc}(P_1)\neq\varnothing$, then $f(\Delta\cap\Gamma_2)$ is a vertical disk at the left of $W^s_{loc}(P_1)$. If $\Delta$ is at right of $W^s_{loc}(P_2)$ or $\Delta\cap W^s_{loc}(P_2)\neq\varnothing$, then $f(\Delta\cap\Gamma_1)$ is a vertical disk at the right of $W^s_{loc}(P_2)$.
\item[(6)] If $\Delta$ is a vertical disk in between $W^s_{loc}(P_1)$ and $W^s_{loc}(P_2)$, then either $f(\Delta\cap\Gamma_1)$ or $f(\Delta\cap\Gamma_2)$ is a vertical disk in between $W^s_{loc}(P_1)$ and $W^s_{loc}(P_2)$.
\end{itemize}
\end{definition}

\begin{remark}
The cube $\Gamma$ is called the \emph{reference cube} of the blender-horseshoe $\Lambda$. And $P_1,P_2$ are called the \emph{reference saddles}. For more details of Definition~\ref{def:blenderhorseshoe}, see \cite{BD3}.
\end{remark}

\begin{definition}[$cu$-blenders]\label{def:blender}
Let $f:M\rightarrow M$ be a $C^1$-diffeomorphism and $\Lambda$ be a transitive hyperbolic set with stable index $n_s$. Then $(\Lambda,f)$ is called a \emph{$cu$-blender} if there are a $C^1$-neighbourhood $\mathscr{U}$ of $f$ and a $C^1$-open set $\mathcal{D}$ of embeddings of $n_u$-dimensional disks $\Delta$ into $M$ such that for every $g\in\mathscr{U}$ and every $\Delta\in\mathcal{D}$, $\Delta\cap W^s_{loc}(\Lambda_g)\neq\varnothing$. Then $\mathcal{D}$ is called the \emph{superposition region} of the blender, and $\mathbb{D}=\cup_{\Delta\in\mathcal{D}}\Delta$ is called the \emph{superposition domain} of the blender. We also say $\Lambda$ is a $cu$-blender if there is no ambiguity.
\end{definition}

\begin{lemma}\label{lem:blenderhorseshoe and cu blender}
$(1)$(\cite{BD3},Lemma~3.9) Let $f:M\rightarrow M$ be a $C^1$-diffeomorphism and $\Lambda$ be a blender-horseshoe with reference cube $\Gamma$ and reference saddles $P_1$ and $P_2$. Then there exists a $C^1$- neighbourhood $\mathcal{U}$ of $f$ such that for any $g\in\mathcal{U}$, $\Lambda_g$ is a blender-horseshoe with reference cube $\Gamma$ and reference saddles $P_{1,g}$ and $P_{2,g}$.

$(2)$(\cite{BD3},Remark~3.10) The blender-horseshoe $\Lambda$ is also a $cu$-blender, and the superposition region contains the set of the vertical disks lying in between $W^s_{loc}(P_1)$ and $W^s_{loc}(P_2)$.
\end{lemma}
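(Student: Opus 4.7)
Since this lemma is quoted from \cite{BD3}, I treat the statement as two separate claims to verify.

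For part (1), my plan is to check that each of the six defining conditions of a blender-horseshoe is $C^1$-open. Conditions (1) and (3) are disjointness assertions between compact sets ($f(\tilde\Gamma_1)\cup f(\tilde\Gamma_2)$, $\tilde\Gamma_1\cup\tilde\Gamma_2$, $f(\Gamma_1)\cup f(\Gamma_2)$) and the boundary faces of $\Gamma$, hence manifestly $C^1$-open. Condition (2) is a cone-field invariance, which is the classical $C^1$-open property obtained by slightly enlarging $\mathcal{C}^{\sigma}_{\varepsilon_0}$ and showing the fattened cones still satisfy strict $Dg$-invariance for $g$ close to $f$. For (4)--(6) the key inputs are the hyperbolic continuations $P_{1,g},P_{2,g}$ of the reference saddles together with the continuous dependence of $W^s_{loc}(P_{i,g})$ on $g$ in the $C^1$-topology: the disjointness and ``left of / right of / in between'' relations among vertical disks and these local stable manifolds involve only compact pieces that move continuously, so the configurations persist for nearby $g$. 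Consequently $\Lambda_g$ is again a blender-horseshoe with reference cube $\Gamma$ and reference saddles $P_{1,g},P_{2,g}$.

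For part (2), the goal is to show that any vertical disk $\Delta$ in between $W^s_{loc}(P_1)$ and $W^s_{loc}(P_2)$ intersects $W^s_{loc}(\Lambda)$, and that this persists $C^1$-robustly. The strategy is a nested-set argument driven by condition (6). Set $\Delta_0:=\Delta$ and recursively use (6) to pick $i_n\in\{1,2\}$ so that $\Delta_{n+1}:=f(\Delta_n\cap\Gamma_{i_n})$ is again a vertical disk in between. Pulling back gives the decreasing sequence of nonempty compact sets
\[
K_n\;:=\;f^{-(n+1)}(\Delta_{n+1})\;=\;\Delta\cap\Gamma_{i_0}\cap f^{-1}(\Gamma_{i_1})\cap\cdots\cap f^{-n}(\Gamma_{i_n}).
\]
Since $\Delta$ is tangent to $\mathcal{C}^{uu}_{\varepsilon_0}\subset\mathcal{C}^{u}_{\varepsilon_0}$ and the latter cone is uniformly expanded by $Df$ (condition (2)), the map $f^{n+1}|_{\Delta}$ expands intrinsic distances by at least $\lambda^{n+1}$ for some $\lambda>1$. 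Combined with the bounded diameter of $\Delta_{n+1}\subset\Gamma$, this forces $\operatorname{diam}(K_n)\to 0$, so $\bigcap_n K_n=\{x\}$ for a single point $x\in\Delta$ whose entire forward orbit remains in $\Gamma$. By hyperbolicity this puts $x\in W^s_{loc}(\Lambda)$, proving $\Delta\cap W^s_{loc}(\Lambda)\neq\varnothing$. Letting $\mathcal{D}$ be the $C^1$-open family of such vertical disks in between, part (1) permits us to rerun exactly the same construction with $g$ in place of $f$ for every $g$ in a small $C^1$-neighbourhood of $f$, which yields the $cu$-blender property with $\mathcal{D}$ as superposition region.

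The main obstacle is the inductive bookkeeping in part (2): one must verify, at each step, that the image $\Delta_{n+1}$ is a \emph{genuine} vertical disk in between, not one crossing $W^s_{loc}(P_1)$ or $W^s_{loc}(P_2)$ and not one whose boundary has escaped $\partial^{uu}\Gamma$. This is precisely what the dichotomy in condition (6) is designed to provide, provided that condition (5) preserves the ``vertical disk'' category under forward iteration and condition (2) ensures the tangency to $\mathcal{C}^{uu}_{\varepsilon_0}$ is inherited. Once these structural pieces are used correctly, the nested compactness argument together with the $C^1$-openness package of part (1) deliver both statements of the lemma.
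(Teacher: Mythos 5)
The paper does not give a proof of this lemma at all: both parts are quoted as black boxes from Bonatti--D\'iaz \cite{BD3} (Lemma~3.9 and Remark~3.10), so there is no internal argument to compare against. Your reconstruction is, up to one gloss, the argument that \cite{BD3} actually runs, and the overall structure is sound: part~(1) is verified by checking openness of the six defining conditions, and part~(2) is the nested-rectangle argument driven by condition~(6), with uniform $\mathcal{C}^u_{\varepsilon_0}$-expansion forcing $\operatorname{diam}(K_n)\to 0$ and the limit point having forward orbit trapped in $\Gamma$.

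Two small points you gloss over. First, the family of vertical disks lying strictly in between $W^s_{loc}(P_1)$ and $W^s_{loc}(P_2)$ is not itself $C^1$-open: tangency to $\mathcal{C}^{uu}_{\varepsilon_0}$ (with $\|w\|\leq\varepsilon_0\|v\|$) and the boundary constraint $\partial\Delta\subset\partial^{uu}\Gamma$ are closed conditions. The superposition region $\mathcal{D}$ is therefore a $C^1$-open \emph{neighborhood} of that family, and the lemma's phrasing (``the superposition region \emph{contains} the set of vertical disks in between'') reflects exactly this. The slack that makes this work is the strict cone-invariance in condition~(2): a disk tangent to a slightly fatter cone $\mathcal{C}^{uu}_{\varepsilon'}$ becomes tangent to $\mathcal{C}^{uu}_{\varepsilon_0}$ after one iterate, so the recursion re-enters the ``genuine vertical disk'' category after the first step and then proceeds as you describe. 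Second, concluding from ``forward orbit of $x$ stays in $\Gamma$'' that $x\in W^s_{loc}(\Lambda)$ (rather than merely in the stable set of $\Lambda$) uses the Markov structure $\{\Gamma_1,\Gamma_2\}$ and the $Df^{-1}$-invariance of $\mathcal{C}^{ss}_{\varepsilon_0}$ to produce the $cs$-disk inside $\Gamma$ through $x$ that shadows a genuine $\Lambda$-orbit; it is worth saying explicitly that the relevant connected component of $W^s(y)\cap\Gamma$ stays in $\Gamma$ under forward iteration, which is what makes it $W^s_{loc}(y)$. With these two remarks supplied, your proposal matches the \cite{BD3} proof.
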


The following lemma can be deduced from the definition of $cu$-blenders and Palis' $\lambda$-lemma.

\begin{lemma}\label{lem:blender}
Suppose $(\Lambda,f)$ is a $cu$-blender, $P\in\Lambda$ is a hyperbolic fixed point, and $Q$ is a hyperbolic fixed point of index $n_s+1$ such that $W^u(Q)$ contains a vertical disk that belongs to the superposition region $\mathcal{D}$, then there exists a $C^1$-neighbourhood $\mathcal{U}$ of $f$, such that for any $g\in\mathcal{U}$, $W^s(Q_g,g)\subset\overline{W^s(P_g,g)}$.
\end{lemma}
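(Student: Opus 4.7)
The plan is to combine the defining property of the $cu$-blender with Palis' $\lambda$-lemma, in three steps.

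First, I would shrink $\cU$ so that for every $g\in\cU$, the continuation $\Lambda_g$ remains a $cu$-blender with the same superposition region $\cD$ (by Lemma~\ref{lem:blenderhorseshoe and cu blender}(1)), the continuation $Q_g$ remains a hyperbolic fixed point of index $n_s+1$, and $W^u(Q_g)$ still contains a vertical disk $\Delta_g\in\cD$ (by continuity of invariant manifolds and openness of $\cD$). The $cu$-blender property then gives $\Delta_g\cap W^s_{loc}(\Lambda_g)\neq\varnothing$, so there exists $y_g\in W^u(Q_g)\cap W^s(x_g)$ with $x_g\in\Lambda_g$.

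Second, I would invoke the transitivity of $\Lambda_g$: since $P_g\in\Lambda_g$ is a fixed point and all periodic points of a transitive hyperbolic set are homoclinically related, the homoclinic class of $P_g$ equals $\Lambda_g$, and hence $W^s(P_g)$ is dense in $W^s_{loc}(\Lambda_g)$. In particular $y_g\in\overline{W^s(P_g)}$, and more is true: the whole local stable lamination $W^s_{loc}(\Lambda_g)$ near $y_g$---a Cantor family of leaves tangent to the stable cone $\cC^{ss}_{\varepsilon_0}$ and hence transverse (in the cone sense) to $W^u_{loc}(Q_g)$---sits inside $\overline{W^s(P_g)}$.

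Third, I would apply Palis' $\lambda$-lemma at $Q_g$ in backward time. Setting $y'_g:=f^{-k}(y_g)\in W^u_{loc}(Q_g)$ for $k$ large and using the $f^{-1}$-invariance of $\overline{W^s(P_g)}$, the Cantor lamination still sits in $\overline{W^s(P_g)}$ near $y'_g$. The $\lambda$-lemma, in its lamination-adapted form, asserts that the backward iterates of this lamination $C^1$-accumulate on $W^s_{loc}(Q_g)$; consequently $W^s_{loc}(Q_g)\subset\overline{W^s(P_g)}$, and iterating by $f^{-n}$ together with invariance of $\overline{W^s(P_g)}$ gives $W^s(Q_g)=\bigcup_n f^{-n}(W^s_{loc}(Q_g))\subset\overline{W^s(P_g)}$.

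The main technical obstacle is the dimension mismatch: each leaf of $W^s_{loc}(\Lambda_g)$ has dimension $n_s$, one less than $\dim W^s(Q_g)=n_s+1$, so a single stable leaf is one dimension short of satisfying the transversality hypothesis of the classical $\lambda$-lemma with respect to $W^u(Q_g)$. The compensation comes from the transverse Cantor structure of the stable lamination in the strong unstable direction, which effectively provides the missing dimension---this is precisely the mechanism encoded in the $cu$-blender's defining property, and the lamination-adapted version of the $\lambda$-lemma needed here is by now standard in the Bonatti--D\'iaz theory of blenders.
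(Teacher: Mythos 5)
Your first two steps agree with the paper: shrink $\mathcal{U}$ so that the blender, the fixed points, and the vertical disk $\Delta_g\subset W^u(Q_g,g)$ all persist, and use transitivity of $\Lambda_g$ to get $W^s_{loc}(\Lambda_g,g)\subset\overline{W^s(P_g,g)}$. The third step, however, contains a genuine gap, and it is not the route the paper takes.

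You propose to pull back the Cantor lamination $W^s_{loc}(\Lambda_g,g)$ near a point of $W^u_{loc}(Q_g,g)$ by $g^{-n}$ and claim, via a ``lamination-adapted $\lambda$-lemma,'' that these backward iterates $C^1$-accumulate on all of $W^s_{loc}(Q_g,g)$. No such result is available in the generality you need. Each leaf of the lamination is an $n_s$-disk tangent to $\mathcal{C}^{ss}_{\varepsilon_0}$; under backward iteration near $Q_g$ (where $E^c$ is a weak stable direction), $Dg^{-n}$ aligns the tangent planes of the leaves with the strong stable directions, so each leaf converges $C^1$ to a piece of $W^{ss}_{loc}(Q_g,g)$, not to an $(n_s{+}1)$-dimensional piece of $W^s_{loc}(Q_g,g)$. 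The transverse Cantor structure does not supply the missing center dimension: the set of center heights that the lamination hits is a Cantor set, and under the (expansive) backward dynamics in the $c$-direction this is merely rescaled, not filled in. Whether the union of all backward rescalings of that Cantor set is dense in an interval is a delicate thickness-type question; it is not ``precisely the mechanism encoded in the $cu$-blender's defining property,'' which instead says that every vertical $n_u$-disk in the superposition region meets $W^s_{loc}(\Lambda_g,g)$ --- a statement about disks tangent to the unstable cone, not about the stable lamination becoming one dimension fatter under $g^{-1}$.

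The paper's actual argument runs in the opposite time direction and is dimension-safe. Take any $x\in W^s(Q_g,g)$ and any open neighbourhood $V\ni x$. Since $V$ is a full $n$-dimensional open set transverse to $W^s(Q_g,g)$, the classical $\lambda$-lemma (applied in forward time at $Q_g$) shows that for $N$ large, $g^N(V)$ contains an $n_u$-disk $C^1$-close to a compact piece of $W^u(Q_g,g)$ containing $\Delta_g$. By openness of $\mathcal{D}$, $g^N(V)$ therefore contains a vertical disk $\Delta'_g\in\mathcal{D}$, and the blender property yields $\Delta'_g\cap W^s_{loc}(\Lambda_g,g)\neq\varnothing$; hence $g^N(V)\cap\overline{W^s(P_g,g)}\neq\varnothing$, and pulling back by $g^{-N}$ and using invariance of $W^s(P_g,g)$ gives $V\cap\overline{W^s(P_g,g)}\neq\varnothing$. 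Since $V$ was arbitrary, $x\in\overline{W^s(P_g,g)}$. Forward-iterating the ambient neighbourhood is what produces a disk of the correct dimension ($n_u$, tangent to the unstable cone) to which the blender applies; no appeal to a nonstandard backward $\lambda$-lemma for laminations is needed. I would rewrite your third step along these lines.
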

\begin{proof}
Fix $\mathcal{U}$ small enough such that for any $g\in\mathcal{U}$, $\Lambda_g,P_g,Q_g$ are all well defined and $(\Lambda_g,g)$ is a $cu$-blender. Then since $\Lambda_g$ is transitive, $\Lambda_g\subset \overline{W^s(P_g,g)}$. Hence, we have that $W^s_{loc}(\Lambda_g,g)\subset \overline{W^s(P_g,g)}$. Since $W^u(Q)$ contains a vertical disk, denoted by $\Delta$, that belongs to the superposition region $\mathcal{D}$, then $W^u(Q_g,g)$ also contains a vertical disk, denoted by $\Delta_g$, that belongs to $\mathcal{D}$, by the continuity of (the compact part of) $W^u(Q_g,g)$ with respect to $g$  and the openness of $\mathcal{D}$.

Now for any $x\in W^s(Q_g,g)$, and any open neighbourhood $V$ of $x$, there exists $N>0$ large enough such that $g^N(V)$ contains a vertical disk $\Delta'_g$ close enough to $\Delta_g$ such that $\Delta'_g\in\mathcal{D}$. Therefore, $g^N(V)\cap W^s_{loc}(\Lambda_g,g)\neq\varnothing$, which implies $g^N(V)\cap \overline{W^s(P_g,g)}\neq\varnothing$. Now the conclusion follows by using the $g$-invariance of $W^s(P_g,g)$.
\end{proof}

\section{Construction of the example}\label{sec:construction}
We will firstly construct our example on $\mathbb{T}^2\times[0,1]$, and then give the construction on $\mathbb{T}^3$ using a reversing technique.

Suppose $\mathbb{A}:\mathbb{T}^2\rightarrow\mathbb{T}^2$ has four fixed points, namely, $p,q,r$ and $s$. Let $\lambda>3$ be one of the two eigenvalues of $\mathbb{A}$. Denote by
\begin{displaymath}
P=(p,0.5),P_i=(p,i), Q_i=(q,i), R_i=(r,i), S_i=(s,i), i=0,1.
\end{displaymath}
From now on in this paper, we will take $\mathbb{T}^2$ to be the base space, and $\mathbb{T}^2\times[0,1]$ and $\mathbb{T}^3=\mathbb{T}^2\times\mathbb{S}^1$ will be viewed as the product spaces over $\mathbb{T}^2$. The points on $\mathbb{T}^2$ will be denoted by lower case letters, such as $p,q,r,s,x$. And the points on the product spaces will be denoted by corresponding capital letters, such as $P,Q,R,S,X$.

First of all, we need a local chart around point $p$ on $\mathbb{T}^2$ given by the next lemma.
\begin{lemma}\label{lem:adapted domain in 2d}
Consider the system $\mathbb{A}:\mathbb{T}^2\rightarrow\mathbb{T}^2$. There exist a homoclinic point of $p$, denoted by $a$, and a local chart centered at $p$, say $(U(p);(x_s,x_u))$, such that
\begin{itemize}
\item[(1)] $\mathbb{A}(x_s,x_u)=(\lambda^{-1}x_s,\lambda x_u),$ for every $(x_s,x_u)\in[-3,3]_s\times[-3,3]_u$.
\item[(2)] The coordinate of $p$ is $(0_s,0_u)$, and the coordinate of $a$ is $(0_s,1_u)$. There exists $n_0>0$ such that the coordinate of $\mathbb{A}^{2n_0}(a)$ is $(1_s,0_u)$.
\item[(3)] Denote $[-2,2]_s\times[-2,2]_u$ by $C$. Then $C\cap\mathbb{A}^{-2n_0}(C)$ contains two connected components, namely $C_1=[-2,2]_s\times[-2\lambda^{-2n_0},2\lambda^{-2n_0}]_u$ and $C_2=[-2,2]_s\times[1-2\lambda^{-2n_0},1+2\lambda^{-2n_0}]_u$. Moreover, $\mathbb{A}^{2n_0}(x_s,x_u)=(1_s+x_s\lambda^{-2n_0},(x_u-1_u)\lambda^{2n_0})$, for every $(x_s,x_u)\in C_2$.
\item[(4)] $C,\mathbb{A}(C_2),\ldots,$ $\mathbb{A}^{2n_0-1}(C_2)$ are pairwise disjoint.
\end{itemize}
\end{lemma}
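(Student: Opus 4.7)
The plan is to build the chart from the eigenbasis of $\mathbb{A}$ lifted to $\mathbb{R}^2$, taking the homoclinic point $a$ to be the projection of a short, suitably isolated lattice vector.

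\emph{Step 1 (Setup and chart).} View $\mathbb{A}$ as a linear map on $\mathbb{R}^2$ with origin a lift of $p$, and let $s,u\in\mathbb{R}^2$ be unit eigenvectors for $\lambda^{-1},\lambda$. Decompose each $L\in\mathbb{Z}^2$ as $L=L_ss+L_uu$; hyperbolicity of $\mathbb{A}$ on $\mathbb{T}^2$ forces $L_s,L_u\ne 0$ for every nonzero $L$, and the projections $L\mapsto L_s$, $L\mapsto L_u$ have dense image in $\mathbb{R}$. For any $L^{(0)}\in\mathbb{Z}^2\setminus\{0\}$ and integer $n_0\ge 1$, put $v_u:=L_u^{(0)}u$, $v_s:=-\lambda^{-2n_0}L_s^{(0)}s$, and $\Psi(x_s,x_u):=\pi(x_sv_s+x_uv_u)$, where $\pi\colon\mathbb{R}^2\to\mathbb{T}^2$ is the canonical projection. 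Then $a:=\Psi(0,1)=\pi(L_u^{(0)}u)$ is homoclinic to $p$, since $L_u^{(0)}u\in\mathbb{R}u$ while $L_u^{(0)}u-L^{(0)}=-L_s^{(0)}s\in\mathbb{R}s$. The map $\Psi$ is injective on $[-3,3]^2$ as soon as $|v_s|,|v_u|$ are smaller than the injectivity radius of $\mathbb{T}^2$ divided by $6$, arranged by choosing $|L_u^{(0)}|$ small (possible by density of the $u$-projections of the lattice) and $n_0$ large.

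\emph{Step 2 (Properties (1)--(3)).} Linearity of $\mathbb{A}$ in the $(s,u)$-basis gives $\mathbb{A}\circ\Psi(x_s,x_u)=\Psi(\lambda^{-1}x_s,\lambda x_u)$, yielding (1). Property (2) follows from the definition of $a$ together with the identity $\mathbb{A}^{2n_0}(L_u^{(0)}u)-(-\lambda^{-2n_0}L_s^{(0)}s)=\mathbb{A}^{2n_0}(L^{(0)})\in\mathbb{Z}^2$, which gives $\mathbb{A}^{2n_0}(a)=\Psi(1,0)$. For (3), lifting to $\mathbb{R}^2$ shows the points of $C\cap\mathbb{A}^{-2n_0}(C)$ arise from lattice vectors $L\in\mathbb{Z}^2$ with $|L_s|\le 2|L_s^{(0)}|(1+\lambda^{-2n_0})$ and $|L_u|\le 2|L_u^{(0)}|(1+\lambda^{-2n_0})$; the choices $L=0$ and $L=L^{(0)}$ carve out exactly the components $C_1$ and $C_2$, and subtracting the lattice vector $\mathbb{A}^{2n_0}(L^{(0)})$ from $\mathbb{A}^{2n_0}(x_sv_s+x_uv_u)$ produces the explicit formula on $C_2$.

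\emph{Step 3 (Property (4) and main obstacle).} It suffices to verify $\mathbb{A}^k(C_2)\cap C=\varnothing$ for $1\le k\le 2n_0-1$; the pairwise disjointness of the iterates then follows from the inclusion $C_2\subset C$. Such an intersection, lifted to $\mathbb{R}^2$, requires a lattice vector $L$ with $|L_s|$ of order at most $\lambda^{-2n_0}|L_s^{(0)}|$ (the $s$-constraint) and $L_u$ within $2|L_u^{(0)}|$ of $-\lambda^kL_u^{(0)}$ (the $u$-constraint). The main obstacle is to rule out such $L$, and this is where the careful choice of $L^{(0)}$ enters: we take $L^{(0)}$ to be a Dirichlet-type best approximation, so that the only lattice vectors with $|L_s|\le\tfrac12|L_s^{(0)}|$ in the relevant window are the forward iterates $\mathbb{A}^j(L^{(0)})$ for $j\ge 2n_0$. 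For these, $|L_u|=\lambda^j|L_u^{(0)}|\ge\lambda^{2n_0}|L_u^{(0)}|$ exceeds the admissible value $\lambda^k|L_u^{(0)}|$ since $k<2n_0$, so the $u$-constraint fails; the choice $L=0$ also fails it because $\lambda^k\ge\lambda>3>2$. Realising this minimality simultaneously with the injectivity of the chart is the delicate Diophantine step, handled by taking $L^{(0)}$ to be a best approximation to the irrational slope of $s$ by $\mathbb{Z}^2$ and choosing $n_0$ sufficiently large to dominate the implicit constants.
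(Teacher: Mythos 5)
Your route is genuinely different from the paper's. The paper argues softly and dynamically: pick any homoclinic orbit of $p$ whose excursion $\mathbb{A}(b),\ldots,\mathbb{A}^{2m-1}(b)$ avoids the cube, thicken this finite set with pairwise disjoint $su$-boxes of a fixed size $\varepsilon>0$, then replace $b$ by $a=\mathbb{A}^{-2m_1}(b)$ and $\mathbb{A}^{2m}(b)$ by $\mathbb{A}^{2n_0}(a)$ deep inside the local invariant manifolds, rescaling the chart so that $a=(0_s,1_u)$ and $\mathbb{A}^{2n_0}(a)=(1_s,0_u)$. Then $C_2$ is so thin that each $\mathbb{A}^i(C_2)$, $1\le i\le 2n_0-1$, lies inside the $\varepsilon$-box around $\mathbb{A}^i(a)$, and item (4) follows with no arithmetic at all. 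Your Steps 1--2 are correct and turn items (3)--(4) into an explicit lattice question, which is a legitimate alternative; but the arithmetic heart, Step 3, is asserted rather than proved, and the assertion as written does not hold. You must also exclude $\pm\mathbb{A}^j(L^{(0)})$ for $1\le j<2n_0$ (not only $j\ge 2n_0$ and $L=0$), and ``Dirichlet-type best approximation to the slope of $s$'' is not a proof that the list of candidates is exactly what you claim.

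The correct arithmetic input is the following, and with it your argument closes. Since $\lambda$ is a quadratic unit, the linear forms $L\mapsto L_s$ and $L\mapsto L_u$ are Galois conjugates over $\mathbb{Q}(\lambda)$ up to a common scalar, so $Q(L)=L_sL_u$ is a rational multiple of the norm form of a real quadratic order, whence $N_{\min}:=\min_{L\in\mathbb{Z}^2\setminus\{0\}}|L_sL_u|>0$. Choose $L^{(0)}$ realising $N_{\min}$, and instead of invoking density of $u$-projections (which does not preserve minimality of the product) replace $L^{(0)}$ by $\mathbb{A}^{-j_0}(L^{(0)})$ with $j_0$ large to make $|L_u^{(0)}|$ small. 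Now if a nonzero lattice vector $L$ meets your $s$- and $u$-constraints for some $1\le k\le 2n_0-1$, then $|L_s|\le\tfrac83\lambda^{-2n_0}|L_s^{(0)}|$ and $|L_u|\le(\lambda^{k}+2+2\lambda^{k-2n_0})|L_u^{(0)}|$, so $|L_sL_u|\le\tfrac83\bigl(\lambda^{-1}+2\lambda^{-2n_0}+2\lambda^{-2n_0-1}\bigr)N_{\min}<N_{\min}$ once $\lambda>3$ and $n_0$ is large, a contradiction; $L=0$ fails the $u$-constraint because $\lambda^k>2$. Two further remarks: a chart linearising $\mathbb{A}$ around a fixed point of $\mathbb{T}^2$ is automatically of the form $\pi(x_sv_s+x_uv_u)$ and hence centrally symmetric, so $L=-L^{(0)}$ produces a third component of $C\cap\mathbb{A}^{-2n_0}(C)$ around $(0_s,-1_u)$ which your Step 2 overlooks (item (3) only claims $C_1,C_2$ are \emph{among} the components, and the paper's construction faces the same symmetry); and the price of your more explicit route is precisely this arithmetic lemma, which the paper's tubular-neighbourhood argument avoids.
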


\begin{proof}
We firstly choose a sufficiently small neighbourhood $U(p)$ of $p$ and some local chart on $U(p)$ such that item(1) is satisfied. This is trivial since $\mathbb{A}$ is linear Anosov. Now choose some homoclinic point $b$ of $p$, such that $b\in W_{loc}^u(p)$ and $\mathbb{A}^{2m}(b)\in W_{loc}^s(p)$, $m>0$. By taking iterations of $b$ and taking a linear transformation of the local chart, $b$ and $m$ can be chosen such that the following conditions are satisfied simultaneously
\begin{itemize}
\item $b=(0_s,1_u),\mathbb{A}^{2m}(b)=(1_s,0_u)$;
\item $\mathbb{A}(b),\ldots,\mathbb{A}^{2m-1}(b)\notin[-2,2]_s\times[-2,2]_u$.
\end{itemize}
Then for every $1\leq i\leq 2m-1$, we choose some open neighbourhood $U(\mathbb{A}^i(b))$ of $\mathbb{A}^i(b)$ such that $U(\mathbb{A}(b)),\ldots,U(\mathbb{A}^{2m-1}(b))$ and $[-2,2]_s\times[-2,2]_u$ are pairwise disjoint. Let $\varepsilon>0$ be some small positive number such that for every $1\leq i\leq 2m-1$, $U(\mathbb{A}^i(b))$ contains an $su$-box centered at $\mathbb{A}^i(b)$ of size $\varepsilon$, i.e.,
$$\bigcup_{x\in W^u_{\varepsilon}(\mathbb{A}^i(b))}W^s_{\varepsilon}(x)\subset U(\mathbb{A}^i(b)).$$
Lastly, take $a=\mathbb{A}^{-2m_1}(b)\in W^u_{\frac{\varepsilon}{2}}(p)$, $\mathbb{A}^{2n_0}(a)=\mathbb{A}^{2m_1+2m+2m_2}(b)\in W^s_{\frac{\varepsilon}{2}}(p)$, and by another linear transformation of the local chart, we assume $a=(0_s,1_u),\mathbb{A}^{2n_0}(a)=(1_s,0_u)$. Define $C,C_1$ and $C_2$ as in item(3), and we claim that item(4) is satisfied. To see this, it is sufficient to notice that $\mathbb{A}^i(C_2)$ is contained in the $su$-box centered at $\mathbb{A}^i(a)$ of size $\varepsilon$ for every $1\leq i\leq 2n_0-1$.
\end{proof}
\begin{figure}[htbp]
\centering
\includegraphics[width=10cm]{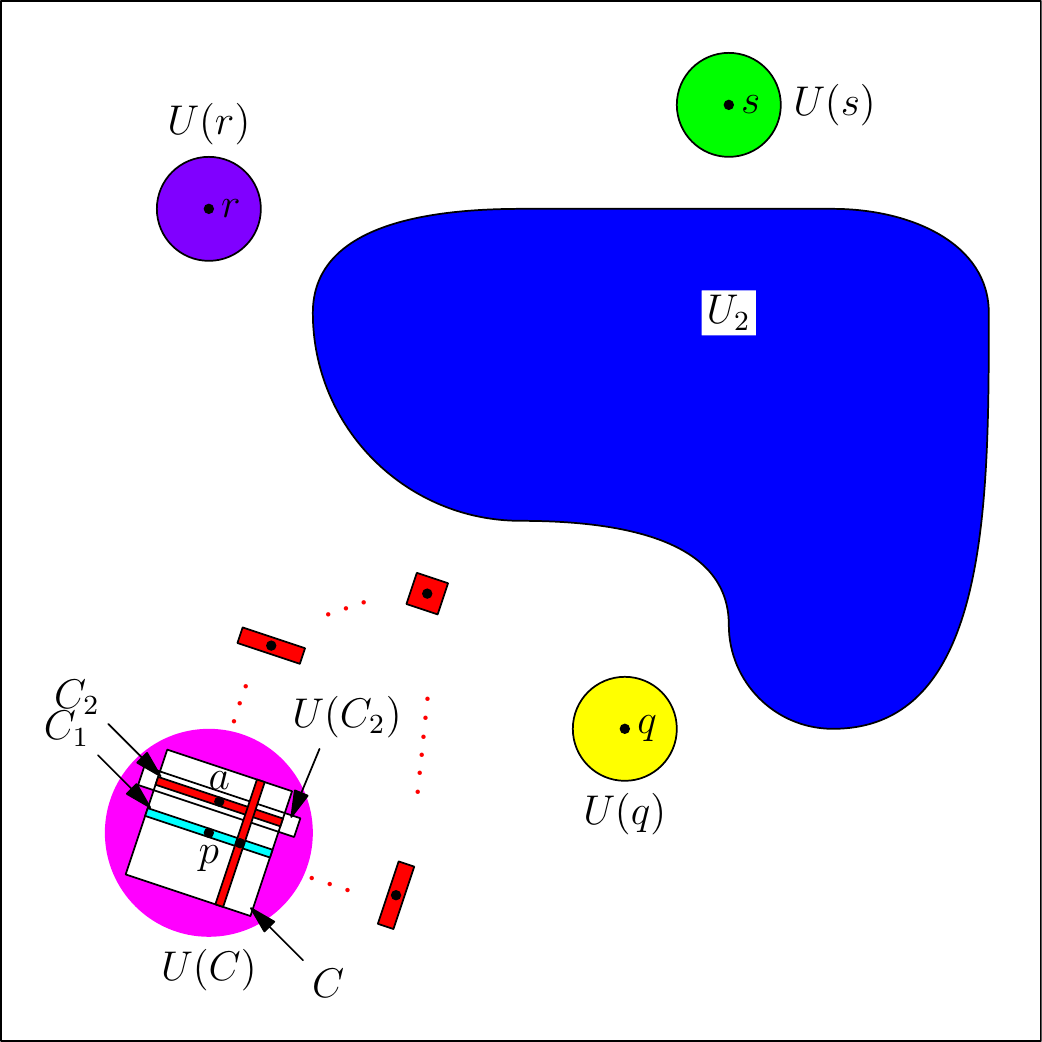}
\caption{Perturbation domains.}
\end{figure}
It can be easily deduced from the proof of the lemma that we can choose the local chart appropriately such that the Lebesgue measure of $\cup_{i=1}^{2n_0-1}\mathbb{A}^i(U(C_2))$ is smaller than any given positive number. Now we fix some $\varepsilon>0$ small enough, and choose a set of open sets contained in $\mathbb{T}^2$ such that
\begin{itemize}
\item[(1)] $U(x)\ni x$, and $\leb(U(x))<\varepsilon$, where $x=q,r,s.$
\item[(2)] $U(C)\supset C$ and $U(C_2)\supset C_2$, where $C,C_2$ are defined as in Lemma~\ref{lem:adapted domain in 2d}, and $U(C_2)\cap C_1=\varnothing$. Moreover, $$\leb(U(C)\bigcup\cup_{i=1}^{2n_0-1}\mathbb{A}^i(U(C_2)))<\varepsilon.$$
\item[(3)] $U_1\subset U_2,$ and $\leb(U_1)>0.5$.
\item[(4)] $U(r),U(s),U(q),U(C),U_2,\mathbb{A}(U(C_2)),\ldots,\mathbb{A}^{2n_0-1}(U(C_2))$ are pairwise disjoint.
\end{itemize}

\begin{remark}
The open sets above are the perturbation domains. In fact, we will perturb $\mathbb{A}\times\id:\mathbb{T}^2\times[0,1]\rightarrow\mathbb{T}^2\times[0,1]$ in $U(r)\times[0,1]$ and $U(s)\times[0,1]$ to get two fibers as in Definition~\ref{def:Kan's example with boundary} (K2). And perturbations will be made in $U_2\times[0,1]$ to guarantee (K4) in Definition~\ref{def:Kan's example with boundary}. As to  $U(C)\times[0,1]$ and $U(C_2)\times[0,1]$, they are used to construct a blender-horseshoe, which is the most important part in our example. Lastly, we will make perturbations in $U(q)\times[0,1]$ to get a hyperbolic fixed point which activates the blender constructed beforehand.
\end{remark}

Now we define a smooth vector field $\mathcal{X}$ on $\mathbb{T}^2\times[0,1]$, such that
\begin{equation}\label{equation:x}
\mathcal{X}(x,\theta)=\left\{\begin{array}{ll}
                             \alpha_1(x)\sin(\pi\theta)\frac{\partial}{\partial \theta},&x\in U(s),\\
                             -\alpha_1(x)\sin(\pi\theta)\frac{\partial}{\partial \theta},&x\in U(r),\\
                             -\alpha_1(x)\sin(2\pi\theta)\frac{\partial}{\partial\theta},&x\in U_2\cup U(C),\\
                             -\alpha_1(x)\beta_1(\theta)\sin(2\pi\theta)\frac{\partial}{\partial\theta},&x\in\cup_{i=0}^{2n_0-1}\mathbb{A}^i(U(C_2)),\\
                             0,&\textrm{otherwise},
                             \end{array}\right.
\end{equation}
where $\alpha_1\in C^{\infty}(\mathbb{T}^2)$ and $\beta_1\in C^{\infty}([0,1])$ are two bump functions satisfying
\begin{displaymath}
\alpha_1(x)\left\{\begin{array}{ll}
                 =1,&x\in\{r,s\}\cup U_1\cup C\bigcup\cup_{i=0}^{2n_0-1}\mathbb{A}^i(C_2),\\
                 =0,&x\notin U(r)\cup U(s)\cup U_2\cup U(C)\bigcup\cup_{i=0}^{2n_0-1}\mathbb{A}^i(U(C_2)),\\
                 \in[0,1],&\textrm{otherwise},
                 \end{array}\right.
\end{displaymath}
and
\begin{displaymath}
\beta_1(\theta)=\left\{\begin{array}{ll}
                       1,&\theta\in(0.5-\varepsilon,0.5+\varepsilon),\\
                       0,&\theta\in[0,\varepsilon)\cup(1-\varepsilon,1],\\
                       \in[0,1],&\textrm{otherwise.}
                       \end{array}\right.
\end{displaymath}

Denote by $\widehat{U(p)}=U(p)\times(0.5-\varepsilon,0.5+\varepsilon)$, and let $(\widehat{U(p)};(x_s,x_u,x_c))$ be a local chart defined on $\mathbb{T}^2\times[0,1]$  such that
\begin{itemize}
\item $(\widehat{U(p)};(x_s,x_u,x_c))$ is compatible with $(U(p);(x_s,x_u))$.
\item The coordinate of $P$ is $(0_s,0_u,0_c)$.
\item The vector field $\mathcal{X}$ can be wrote as $\mathcal{X}(x_s,x_u,x_c)=x_c\frac{\partial}{\partial x_c},$ for every $(x_s,x_u,x_c)\in[-3_s,3_s]\times[-3_u,3_u]\times[-3_c,3_c]$.
\end{itemize}

\begin{figure}[htbp]
\centering\includegraphics[width=10cm]{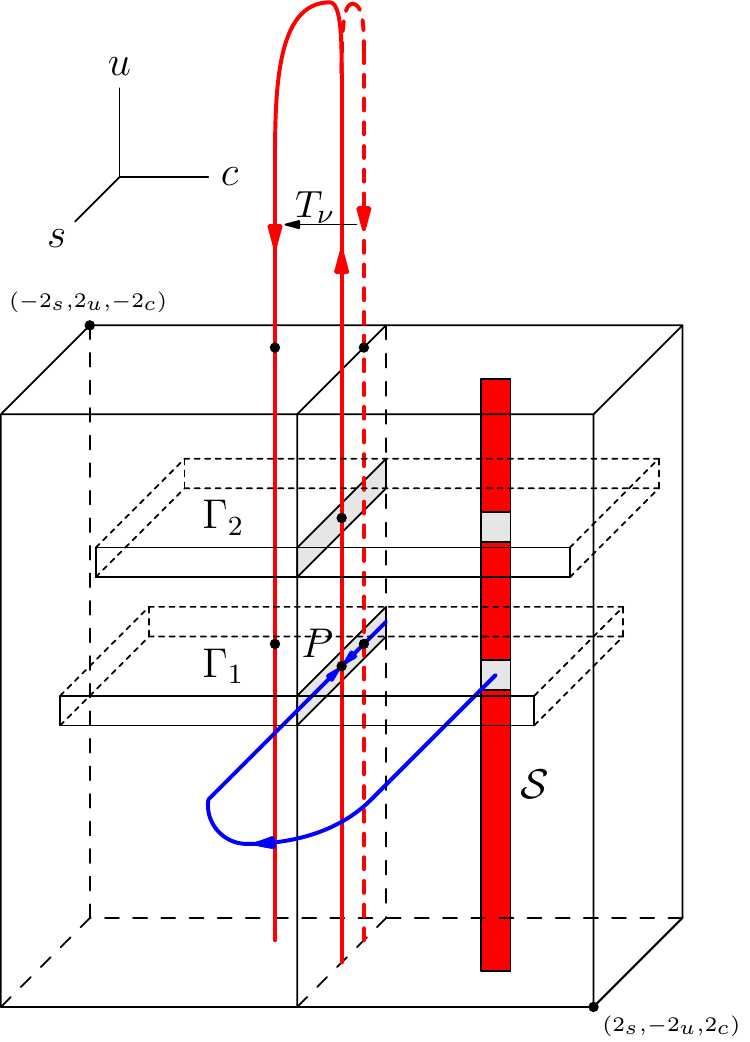}
\caption{Construction of blender-horseshoes.}
\end{figure}

Then define $\Gamma$ to be a subset of $\widehat{U(p)}$ such that it is identified with $[-2,2]_s\times[-2,2]_u\times[-2,2]_c$ under the local chart.

Now consider a family of $C^{\infty}$diffeomorphisms $\{f_{t,\nu}\}_{t,\nu\geq0}$ satisfying
\begin{itemize}
\item $f_{t,\nu}(x,\theta)=(\mathbb{A}\times\id)\circ\mathcal{X}_{t}(x,\theta)$, for all $x\in U(r)\cup U(s)\cup U(C)\cup U_2$, where $\mathcal{X}_t$ is the time $t$ map of $\mathcal{X}$.
\item $f_{t,\nu}^{2n_0}(x_s,x_u,x_c)=T_{\nu}\circ (\mathbb{A}\times\id)^{2n_0}\circ\mathcal{X}_{2n_0t}(x_s,x_u,x_c),$ for all $(x_s,x_u,x_c)\in C_2\times(0.5-0.5\varepsilon,0.5+0.5\varepsilon)$, where $T_{\nu}$ is the translation defined by $T_{\nu}(x_s,x_u,x_c)=(x_s,x_u,x_c-\nu).$
\item $f_{t,\nu}$ is skew product for all $t\geq0$ and $\nu\geq0$, that is, every $f_{t,\nu}$ preserves the center foliation $\mathscr{L}=\{\{x\}\times[0,1]:x\in\mathbb{T}^2\}$.
\item $f_{t,\nu}(x,\theta)=(\mathbb{A}x,\theta)$, for every $x\in\bigcup_{i=0}^{2n_0-1}\mathbb{A}^i(U(C_2))$ and $\theta\in[0,\varepsilon)\cup(1-\varepsilon,1]$.
\end{itemize}

\begin{lemma}\label{lem:parameters of blenderhorseshoe}
Let $\nu_t=e^{2n_0t}-1$, and $\Lambda_t=\cap_{i\in\mathbb{Z}}f^{2n_0i}_{t,\nu_t}(\Gamma)$. Then for $t>0$ small enough, $(\Lambda_t,f^{2n_0}_{t,\nu_t})$ is a blender-horseshoe with reference cube $\Gamma$ and reference saddles $P=(0_s,0_u,0_c)$ and $O=(1_s/(1-\lambda^{-2n_0}),1_u/(1-\lambda^{-2n_0}),1_c)$.
\end{lemma}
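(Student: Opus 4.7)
The plan is to compute $f_{t,\nu_t}^{2n_0}$ explicitly on two sub-cubes $\tilde{\Gamma}_1\subset C_1\times[-2,2]_c$ and $\tilde{\Gamma}_2\subset C_2\times[-2,2]_c$ of $\Gamma$, and then verify the six defining conditions of Definition~\ref{def:blenderhorseshoe} one by one. In the chart on $\widehat{U(p)}$ we have $\mathcal{X}=x_c\partial_{x_c}$, so $\mathcal{X}_t$ acts as $(x_s,x_u,x_c)\mapsto(x_s,x_u,e^t x_c)$ and commutes with $\mathbb{A}\times\id$; combined with the defining identity $f_{t,\nu_t}^{2n_0}=T_{\nu_t}\circ(\mathbb{A}\times\id)^{2n_0}\circ\mathcal{X}_{2n_0 t}$ on the $C_2$-branch together with Lemma~\ref{lem:adapted domain in 2d}(3), this yields
\begin{equation*}
f_{t,\nu_t}^{2n_0}(x_s,x_u,x_c)=\begin{cases}(\lambda^{-2n_0}x_s,\,\lambda^{2n_0}x_u,\,e^{2n_0 t}x_c)&\text{on }\tilde{\Gamma}_1,\\(1+\lambda^{-2n_0}x_s,\,\lambda^{2n_0}(x_u-1),\,1+e^{2n_0 t}(x_c-1))&\text{on }\tilde{\Gamma}_2.\end{cases}
\end{equation*}
The choice $\nu_t=e^{2n_0 t}-1$ is tailored precisely to move the $x_c$-expanding fixed point on the $C_2$-branch from $0$ to $1$, placing $O$ at the claimed coordinates.

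Setting $\mu:=e^{-2n_0 t}$ and solving for the preimages, one finds $\tilde{\Gamma}_1=[-2,2]_s\times[-2\lambda^{-2n_0},2\lambda^{-2n_0}]_u\times[-2\mu,2\mu]_c$ and $\tilde{\Gamma}_2=[-2,2]_s\times[1-2\lambda^{-2n_0},1+2\lambda^{-2n_0}]_u\times[1-3\mu,1+\mu]_c$. Conditions (1)--(3) of Definition~\ref{def:blenderhorseshoe} are then routine: $C_1\cap C_2=\varnothing$ from Lemma~\ref{lem:adapted domain in 2d} gives the Markov separation; the images have $x_s$-width $2\lambda^{-2n_0}$ compactly inside $(-2,2)_s$ since $\lambda>3$; both $\tilde{\Gamma}_i$ lie strictly interior in the $x_u$- and $x_c$-directions, hence avoid $\partial^{uu}\Gamma\cup\partial^u\Gamma$, and $f_{t,\nu_t}^{2n_0}(\tilde{\Gamma}_i)$ avoids $\partial^{ss}\Gamma$. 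The cone conditions in~(2) are automatic because $Df_{t,\nu_t}^{2n_0}$ is constant diagonal on each branch with eigenvalues $\lambda^{-2n_0}<e^{2n_0 t}<\lambda^{2n_0}$; any sufficiently narrow $\varepsilon_0$-cones around the coordinate axes are both invariant and uniformly expanded/contracted provided $t\in(0,\log\lambda)$.

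The substantive content is conditions (4)--(6). The stable manifolds $W^s_{loc}(P)=\{x_c=0,\,x_u=0\}$ and $W^s_{loc}(O)=\{x_c=1,\,x_u=1/(1-\lambda^{-2n_0})\}$ are parallel horizontal lines separated in the $x_c$-direction, so a vertical disk's position \emph{left/right/in-between} these lines is encoded in its (essentially constant) $x_c$-coordinate $c$; condition~(4) follows by taking the cone angle $\varepsilon_0$ small enough that disks hitting $W^s_{loc}(P)$ (resp.\ $W^s_{loc}(O)$) stay in a tiny $x_c$-neighbourhood of $0$ (resp.\ $1$), hence avoid both $\partial^c\Gamma$ and each other. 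Under iteration the two branches act on the $x_c$-coordinate as the affine maps $c\mapsto c/\mu$ and $c\mapsto 1+(c-1)/\mu$, so condition~(5) reduces to direct sign verifications (e.g.\ $c>0$ gives $c/\mu>0$; $c<1$ gives $1+(c-1)/\mu<1$; etc.). The decisive step is condition~(6): for $c\in(0,1)$ the $\Gamma_1$-image lies in $(0,1)$ iff $c<\mu$, while the $\Gamma_2$-image lies in $(0,1)$ iff $c>1-\mu$, and the union $(0,\mu)\cup(1-\mu,1)$ covers $(0,1)$ precisely when $\mu>1/2$, i.e.\ $t<(\log 2)/(2n_0)$. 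This last inequality, the key blender-horseshoe condition, is the only nontrivial quantitative constraint and identifies the range of $t$ for which $(\Lambda_t,f_{t,\nu_t}^{2n_0})$ is a blender-horseshoe with reference cube $\Gamma$ and reference saddles $P,O$.
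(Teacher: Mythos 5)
Your proof is correct and takes the same underlying approach as the paper: compute $f_{t,\nu_t}^{2n_0}$ explicitly in the adapted chart (you obtain the identical affine model, and in fact your $x_c$-range $[1-3\mu,1+\mu]_c$ for the second rectangle tacitly corrects the typographical error in the paper's displayed $\Gamma_2$, which reads $[1-3e^{-3n_0t},1+e^{2n_0t}]_c$). The difference is that the paper then declares ``We will not go into details \ldots and refer the reader to Proposition~5.1 of [BD3]'', proving only the key intersection property as an illustrative lemma, whereas you carry out the verification of all six conditions of Definition~\ref{def:blenderhorseshoe} directly and in doing so extract the explicit quantitative range $t\in\bigl(0,\tfrac{\log 2}{2n_0}\bigr)$ (equivalently $\mu=e^{-2n_0t}>1/2$) for which the covering $(0,\mu)\cup(1-\mu,1)\supset(0,1)$ closes condition~(6). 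That explicit bound is a genuine added value: the paper only asserts ``$t>0$ small enough'', and your argument identifies exactly which inequality is binding (it is not the domination constraints $e^{2n_0t}<\lambda^{2n_0}$, which hold much longer since $\lambda>3$, but the overlap of the two $x_c$-images). The one point that is stated a bit loosely is the reduction of vertical disks to their ``essentially constant'' $x_c$-coordinate $c$: a vertical disk tangent to $\mathcal{C}^{uu}_{\varepsilon_0}$ has $x_c$-oscillation of order $\varepsilon_0$, so to make conditions~(4)--(6) airtight one should fix $\varepsilon_0$ small compared with $\mu-\tfrac12$ (and with the distance from $W^s_{loc}(P)$, $W^s_{loc}(O)$ to $\partial^c\Gamma$), so that the strict inclusions survive the $O(\varepsilon_0)$ slack; this is a routine tightening, not a gap in the idea.
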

\begin{remark}
It is worthwhile pointing out that here the reference cube and reference saddles of the blender-horseshoes are independent of $t$.
\end{remark}

Define $\Gamma_1=[-2,2]_s\times[-2\lambda^{-2n_0},2\lambda^{-2n_0}]_u\times[-2e^{-2n_0t},2e^{-2n_0t}]_c$, $\Gamma_2=[-2,2]_s\times[1-2\lambda^{-2n_0},1+2\lambda^{-2n_0}]_u\times[1-3e^{-3n_0t},1+e^{2n_0t}]_c$. Then $f^{-2n_0}_{t,\nu_t}(\Gamma)\cap\Gamma=\Gamma_1\cup\Gamma_2,$ and
\begin{displaymath}
f^{2n_0}_{t,\nu_t}(x_s,x_u,x_c)=\left\{\begin{array}{lc}
                             (\lambda^{-2n_0}x_s,\lambda^{2n_0}x_u,e^{2n_0t}x_c),&(x_s,x_u,x_c)\in\Gamma_1.\\
                             (\lambda^{-2n_0}x_s+1,\lambda^{2n_0}(x_u-1),e^{2n_0t}(x_c-1)+1),&(x_s,x_u,x_c)\in\Gamma_2.\end{array}\right.
\end{displaymath}

Then it is easy to check that $P\in\Gamma_1$ and $O\in\Gamma_2$ are two hyperbolic fixed points of $f^{2n_0}_{t,\nu_t}$.

We will not go into details of the proof of Lemma~\ref{lem:parameters of blenderhorseshoe}, and refer the reader to Proposition~5.1 of \cite{BD3}. Instead, we will prove the following lemma just to exhibit the main properties of the blender-horseshoe.
\begin{lemma}
For every $t>0$ small enough, $W^s(P,f^{2n_0}_{t,\nu_t})$ intersects every strip $\mathcal{S}$ in between $W^s_{loc}(P,f^{2n_0}_{t,\nu_t})$ and $W^s_{loc}(O,f^{2n_0}_{t,\nu_t})$.
\end{lemma}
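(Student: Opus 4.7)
The plan is to exploit the expanding structure of $f := f^{2n_0}_{t,\nu_t}$ on the center coordinate $x_c$. Let $\pi : \Gamma \to [-2,2]_c$ denote the projection onto $x_c$, and write $\lambda_c := e^{2n_0 t}$. The explicit formulas for $f$ on $\Gamma_1$ and $\Gamma_2$ show that $\pi\circ f$ acts on the center fibre by two affine branches, $F_1(x_c) = \lambda_c x_c$ on $\Gamma_1$ (fixing $0$) and $F_2(x_c) = \lambda_c(x_c - 1) + 1$ on $\Gamma_2$ (fixing $1$), both with expansion rate $\lambda_c$. For $t > 0$ small, $\lambda_c \leq 2$, so the pre-images $J_1 := F_1^{-1}([0,1]) = [0, \lambda_c^{-1}]$ and $J_2 := F_2^{-1}([0,1]) = [1 - \lambda_c^{-1}, 1]$ overlap and together cover $[0, 1]$. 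A vertical strip $\mathcal{S}$ in between has center projection $\pi(\mathcal{S}) \subset (0, 1)$, and my goal is to find a point of $\mathcal{S}$ that some iterate $f^n$ sends onto $W^s_{loc}(P) = \{(x_s, 0, 0) : x_s \in [-2, 2]\}$.

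The key IFS input I will use is density of cylinders. For each finite word $\omega = (i_1, \ldots, i_n) \in \{1, 2\}^n$, the cylinder $C_\omega := F_{i_1}^{-1} \circ \cdots \circ F_{i_n}^{-1}([0, 1])$ is an interval of length $\lambda_c^{-n}$ on which $F_\omega^n := F_{i_n} \circ \cdots \circ F_{i_1}$ is an affine bijection onto $[0, 1]$. Because $J_1 \cup J_2 = [0, 1]$, the level-$n$ cylinders cover $[0, 1]$; their left endpoints are exactly the finite "base-$\lambda_c$" sums with digit alphabet $\{0, 1 - \lambda_c^{-1}\}$, and they become dense in $[0, 1]$ as $n \to \infty$ because the contracting IFS $\{F_1^{-1}, F_2^{-1}\}$ has attractor $[0,1]$. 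Hence for the open interval $I := \pi(\mathcal{S}) \subset (0, 1)$ I can choose $n$ large and $\omega$ with $C_\omega \subset I$.

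Let $R_\omega := \Gamma_{i_1} \cap f^{-1}(\Gamma_{i_2}) \cap \cdots \cap f^{-(n-1)}(\Gamma_{i_n})$ be the Markov rectangle for $\omega$, whose center projection is exactly $C_\omega$, and set $\mathcal{S}^* := \mathcal{S} \cap R_\omega$. By item~(3) of Definition~\ref{def:blenderhorseshoe} (Markov structure) together with the cone-invariance in item~(2), the image $\widetilde{\mathcal{S}} := f^n(\mathcal{S}^*)$ is a bona fide vertical strip in $\Gamma$: its strong-unstable boundary is carried by $f^n$ back onto $\partial^{uu}\Gamma$, and its center projection equals $F_\omega^n(C_\omega) = [0, 1]$. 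Now $\widetilde{\mathcal{S}}$ is tangent to $\mathcal{C}^u_{\varepsilon_0}$, which in our setup is the cone around the $(x_u, x_c)$-plane, so $\widetilde{\mathcal{S}}$ is a graph $x_s = x_s^{\widetilde{\mathcal{S}}}(x_u, x_c)$ whose domain is the $(x_u, x_c)$-projection of the strip, namely $[-2, 2] \times [0, 1]$. In particular, the point $(x_s^{\widetilde{\mathcal{S}}}(0, 0), 0, 0) \in \widetilde{\mathcal{S}}$ lies in $W^s_{loc}(P)$. Pulling back by $f^n$ produces a point of $\mathcal{S}^* \subset \mathcal{S}$ that belongs to $W^s(P)$, as required.

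The main obstacle I expect is the careful bookkeeping behind the third step: verifying that $\mathcal{S}^*$ sits correctly inside $R_\omega$ and that, stage by stage, the iterates $f^k(\mathcal{S}^*)$ remain tangent to $\mathcal{C}^u_{\varepsilon_0}$ with strong-unstable boundary on $\partial^{uu}\Gamma$, so that $\widetilde{\mathcal{S}}$ is a vertical strip in the sense of the definition and its $(x_u, x_c)$-domain is the full rectangle $[-2, 2] \times [0, 1]$. The one-dimensional IFS computations (covering property for $\lambda_c \leq 2$, density of cylinder endpoints, bijectivity of $F_\omega^n$ on $C_\omega$) are routine, and the final graph argument is immediate once $\widetilde{\mathcal{S}}$ is recognized as a vertical strip whose center projection equals $[0, 1]$.
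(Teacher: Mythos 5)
Your approach is genuinely different from the paper's. The paper works one iterate at a time and establishes a dichotomy: for a vertical strip $\mathcal{S}$ in between, $f^{2n_0}_{t,\nu_t}(\mathcal{S}\cap(\Gamma_1\cup\Gamma_2))$ contains a vertical strip $\mathcal{S}'$ that is either again in between with $\width(\mathcal{S}')>\lambda'_t\width(\mathcal{S})$, or already meets $W^s_{loc}(P)$. Iterating, the width bound forces the second alternative. The delicate part (general strips) is handled disk-by-disk: the paper tracks the parameter interval $I$ of $s\in[0,1]$ for which $f^{2n_0}_{t,\nu_t}(\Delta_s\cap\Gamma_i)$ remains in between, and at a boundary parameter $s_1$ the image disk must, by continuity of the homotopy class, meet $W^s_{loc}(P)$. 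Your approach instead codes the center dynamics as an IFS $\{F_1^{-1},F_2^{-1}\}$, picks a cylinder sitting inside $\pi(\mathcal{S})$, and iterates that Markov piece directly to cover the center interval. That is a tidy reorganization, and the covering condition $J_1\cup J_2=[0,1]$ for $e^{2n_0 t}\le 2$ correctly isolates the essential input.

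However, as written, the final step has a genuine gap, and there is also a small computational error feeding into it. First, $\pi(R_\omega)$ is \emph{not} $C_\omega=F_\omega^{-n}([0,1])$; since $\pi(\Gamma_i)=F_i^{-1}([-2,2])$, one gets $\pi(R_\omega)=F_\omega^{-n}([-2,2])=:\tilde C_\omega\supsetneq C_\omega$, an interval four times as long. Second, and more importantly, even granting $\pi(\widetilde{\mathcal S})=[0,1]$, it does not follow that the $(x_u,x_c)$-domain of the graph is the product $[-2,2]\times[0,1]$: the two boundary curves of that domain are the $(x_u,x_c)$-projections of the images of the bounding vertical disks, and these are graphs $x_c=x_c^{(j)}(x_u)$ with small but possibly nonzero slope, so $\min_{x_u}x_c^{(0)}(x_u)=0$ does not force $x_c^{(0)}(0)\le 0$. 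Since $W^s_{loc}(P)=D^s\times\{0_u\}\times\{0_c\}$ is a one-dimensional horizontal disk, hitting it requires precisely that $(0_u,0_c)$ lie in the graph domain, and with your choice of cylinder this corner can be cut off by the tilt. The fix is short: choose $n$ large enough that the \emph{full} Markov range $\tilde C_\omega$ (not just $C_\omega$) sits strictly inside $\interior\,\pi(\mathcal S)$; then $\pi(\widetilde{\mathcal S})=F_\omega^n(\tilde C_\omega)=[-2,2]$ and the two boundary disks of $\widetilde{\mathcal S}$ land on the faces $\{x_c=\pm2\}$ of $\Gamma$, so $(0_u,0_c)$ is comfortably in the interior of the graph domain and the intersection with $W^s_{loc}(P)$ is guaranteed. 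This is essentially the role the paper's disk-by-disk continuity argument plays: it certifies the hit at the boundary parameter $s_1$ without ever having to argue about what the graph domain of an iterated strip looks like.
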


\begin{proof}
For any $t>0$ small enough, we choose the corresponding conefields $\mathcal{C}^s_{\varepsilon_t},\mathcal{C}^u_{\varepsilon_t},\mathcal{C}^{uu}_{\varepsilon_t}$ of size $\varepsilon_t$ sufficiently small, such that for any $v\in\mathcal{C}^u_{\varepsilon_t}$, $\parallel Df^{2n_0}_{t,\nu_t}(v)\parallel>(e^{2n_0t}+1)/2\parallel v\parallel.$ Denote by $\lambda'_t=(e^{2n_0t}+1)/2$.

To prove the lemma, we only have to show that for any vertical strip $\mathcal{S}$ in between $W^s_{loc}(P,f^{2n_0}_{t,\nu_t})$ and $W^s_{loc}(O,f^{2n_0}_{t,\nu_t})$, $f^{2n_0}_{t,\nu_t}(\mathcal{S}\cap(\Gamma_1\cup\Gamma_2))$ contains a vertical strip $\mathcal{S}'$, such that either $\width(\mathcal{S}')>\lambda'_t\width(\mathcal{S})$ or $\mathcal{S}'\cap W^s_{loc}(P,f^{2n_0}_{t,\nu_t})\neq \varnothing$.

To show the main ingredients of the proof, we firstly consider the vertical strips $\mathcal{S}$ in between $W^s_{loc}(P,f^{2n_0}_{t,\nu_t})$ and $W^s_{loc}(O,f^{2n_0}_{t,\nu_t})$ of the type $\{x_s\}\times[-2,2]_u\times[t_1,t_2]_c,$ where $0<t_1<t_2< 1$. If $t_2\leq e^{-2n_0t}$, denote $f^{2n_0}_{t,\nu_t}(\Gamma_1\cap\mathcal{S})$ by $\mathcal{S}'$, then $\mathcal{S}'=\{\lambda^{-2n_0}x_s\}\times[-2,2]_u\times[e^{2n_0t}t_1,e^{2n_0t}t_2]_c$ is a vertical strip in between $W^s_{loc}(P,f^{2n_0}_{t,\nu_t})$ and $W^s_{loc}(O,f^{2n_0}_{t,\nu_t})$, with width $e^{2n_0t}\width(\mathcal{S})>\lambda'_t\width(\mathcal{S})$.

If $e^{-2n_0t}<t_2< 1$, denote $f^{2n_0}_{t,\nu_t}(\mathcal{S}\cap\Gamma_2)$ by $\mathcal{S}'$, then there are two subcases. If $e^{2n_0t}(t_1-1)+1\geq0,$ then $\mathcal{S}'=\{\lambda^{-2n_0}x_s+1\}\times[-2,2]_u\times[e^{2n_0t}(t_1-1)+1,e^{2n_0t}(t_2-1)+1]_c$ is a vertical strip in between $W^s_{loc}(P,f^{2n_0}_{t,\nu_t})$ and $W^s_{loc}(O,f^{2n_0}_{t,\nu_t})$, with width $e^{2n_0t}\width(\mathcal{S})>\lambda'_t\width(\mathcal{S})$. Otherwise, noting that $e^{2n_0t}(t_2-1)+1>0,$ we deduce that $\mathcal{S}'$ is a vertical strip intersecting $W^s_{loc}(P,f^{2n_0}_{t,\nu_t})$.

Now consider a general vertical strip $\mathcal{S}$ in between $W^s_{loc}(P,f^{2n_0}_{t,\nu_t})$ and $W^s_{loc}(O,f^{2n_0}_{t,\nu_t})$, i.e., it is the image of an embedding $\Phi:[0,1]\times D^u\rightarrow \Gamma$ such that $\Phi(\{t\}\times D^u)$ is a vertical disk in between $W^s_{loc}(P,f^{2n_0}_{t,\nu_t})$ and $W^s_{loc}(O,f^{2n_0}_{t,\nu_t})$ for every $t\in[0,1]$, and $\Phi([0,1]\times D^u)$ is tangent to $\mathcal{C}^u_{\varepsilon_t}$.

If $\mathcal{S}\cap\Gamma_1\neq\varnothing$, and $\mathcal{S}\cap\{x\in\Gamma:x_c\geq e^{-2n_0t}\}=\varnothing$, then it is obvious to see that $\mathcal{S}'=f^{2n_0}_{t,\nu_t}(\mathcal{S}\cap\Gamma_1)$ is a vertical strip in between $W^s_{loc}(P,f^{2n_0}_{t,\nu_t})$ and $W^s_{loc}(O,f^{2n_0}_{t,\nu_t})$. Moreover, $\width(\mathcal{S}')>\lambda'_t\width(\mathcal{S})$ since $\mathcal{S}$ is tangent to $\mathcal{C}^u_{\varepsilon_t}$.

Otherwise, there exists $s_0\in(0,1)$, such that $\Delta_{s_0}=\Phi(\{s_0\}\times D^u)\subset\{x\in\Gamma:x_c\geq0.5\}$, and $\Delta'_{s_0}=f^{2n_0}_{t,\nu_t}(\Delta_{s_0}\cap\Gamma_2)$ is a vertical disk in between $W^s_{loc}(P,f^{2n_0}_{t,\nu_t})$ and $W^s_{loc}(O,f^{2n_0}_{t,\nu_t})$. Now define $I$ to be the maximal subinterval of $[0,1]$ containing $s_0$ such that for every $s\in I,$ $\Delta'_{s}=f^{2n_0}_{t,\nu_t}(\Delta_s\cap\Gamma_2)$ is a vertical disk in between $W^s_{loc}(P,f^{2n_0}_{t,\nu_t})$ and $W^s_{loc}(O,f^{2n_0}_{t,\nu_t})$. It is immediate that $I$ has nonempty interior by the continuity of $\Phi(\{\cdot\}\times D^u)$. If $I$ equals $[0,1]$, we are done. Otherwise, at lease one of the extremes of $I$, say $s_1$, is not contained in $I$. Then $f^{2n_0}_{t,\nu_t}(\Delta_{s_1})$ must have nonempty intersection with $W^s_{loc}(P,f^{2n_0}_{t,\nu_t})$ or $W^s_{loc}(O,f^{2n_0}_{t,\nu_t})$. The second subcase can not happen, since if $f^{2n_0}_{t,\nu_t}(\Delta_{s_1})\cap W^s_{loc}(O,f^{2n_0}_{t,\nu_t})\neq\varnothing,$ then $\Delta_{s_1}\cap W^s_{loc}(O,f^{2n_0}_{t,\nu_t})\neq\varnothing$ by computation, which contradict with the assumption that $\Delta_{s_1}$ is in between $W^s_{loc}(P,f^{2n_0}_{t,\nu_t})$ and $W^s_{loc}(O,f^{2n_0}_{t,\nu_t})$. Then the first subcase must hold, i.e., $f^{2n_0}_{t,\nu_t}(\Delta_{s_1})\cap W^s_{loc}(P,f^{2n_0}_{t,\nu_t})\neq\varnothing.$ The proof of the lemma has now been finished.
\end{proof}

From now on, we will denote $f_{t,\nu_t}$ by $f_t$ for simplicity. Define $\Gamma'=\{x\in\Gamma:0<x_c<1,-2<x_s<2\}$. Then it is easy to check that $\Gamma'$ is contained in the superposition domain $\mathbb{D}_t$ of the blender-horseshoe $(\Lambda_t,f^{2n_0}_{t,\nu_t})$ for every $t>0$ small enough. Since $\mathbb{A}$ is Anosov, then $W^u(\Gamma',\mathbb{A}\times\id)\cap \bar{q}$ contains an interval with nonempty interior, where $\bar{q}$ stands for the fiber $\{q\}\times[0,1]$. Choose such an interior point $Q=(q,\theta_0)$, and we remark here that $Q$ can be chosen very close to $(q,0.5)$. Then there exists a $C^1$-neighbourhood $\mathcal{U}$ of $\mathbb{A}\times\id$, such that for every $g\in\mathcal{U}$, $W^{uu}(Q,g)$ contains a vertical disk that belongs to the superposition region $\mathcal{D}_t$ of $(\Lambda_t,f^{2n_0}_{t,\nu_t})$.

Let $\gamma_t$ be a segment in $W^{uu}(Q,f_t)$ containing $Q$ and a vertical disk contained in the superposition region $\mathcal{D}_t$. Denote by $\gamma'_t$ the connected component of $\gamma_t\cap(U(q)\times[0,1])$ containing $Q$, and let $U'(q)\subset U(q)$ be a neighbourhood of $q$ such that $\gamma_t\cap (U'(q)\times[0,1])=\gamma'_t.$

Now let $\alpha_2:\mathbb{T}^2\rightarrow[0,1]$ be a smooth function such that $\alpha_2(q)=1$ and $\alpha_2(x)=0$ for all $x\notin U'(q).$ Let $\beta_2:[0,1]\rightarrow\mathbb{R}$ be a smooth function satisfying
\begin{itemize}
\item $\beta_2(0)=\beta_2(1)=\beta_2(\theta_0)=0.$
\item $\beta_2(\theta)>0$, when $0<\theta<\theta_0$, and $\beta_2(\theta)<0$, when $\theta_0<\theta<1$.
\item $\beta'_2(\theta)=1,$ when $\theta\in[0,\varepsilon)\cup(1-\varepsilon,1].$
\end{itemize}

\begin{figure}[htbp]\label{fig:global}
\centering
\includegraphics[width=10cm]{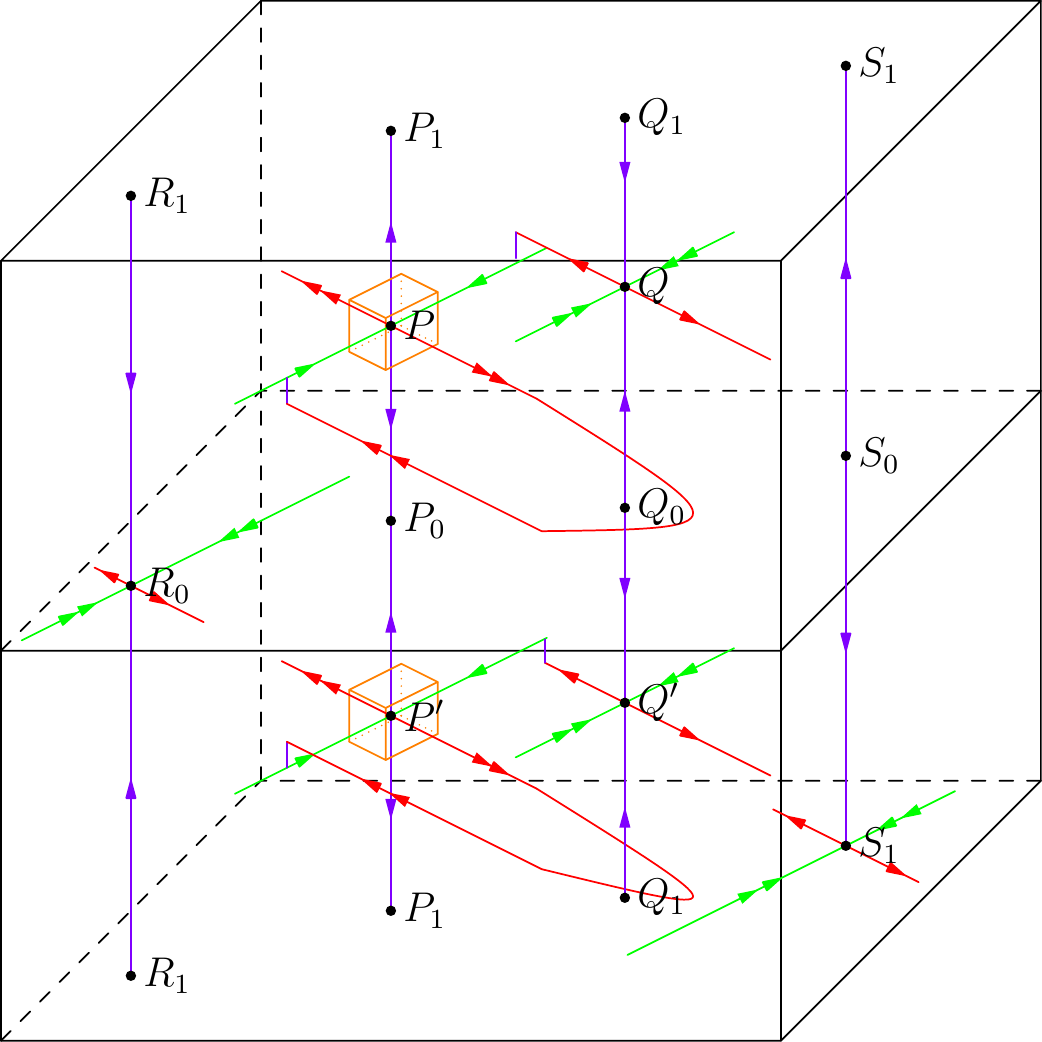}
\caption{Robustly transitive Kan's example on $\mathbb{T}^3$.}
\end{figure}

Define on $\mathbb{T}^2\times[0,1]$ a smooth vector field $\mathcal{Y}(x,\theta)=\alpha_2(x)\beta_2(\theta)\frac{\partial}{\partial\theta}$.

\begin{proposition}\label{prop:construction of kan on boundary manifolds}
For any $t>0$ small enough, we have $\tilde{f}_t=f_t\circ\mathcal{Y}_{t}\in\mathscr{K}^{\infty}(\mathbb{T}^2\times[0,1]).$
\end{proposition}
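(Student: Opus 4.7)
The plan is to verify the four defining conditions (K1)--(K4) of Definition~\ref{def:Kan's example with boundary} for $\tilde{f}_t$, together with the fact that it is a $C^\infty$ skew-product diffeomorphism. The skew-product structure is automatic: $f_t$ is a skew product by the third bullet in its defining formulas and $\mathcal{Y}_t$ is the time-$t$ flow of a purely vertical vector field, so $\tilde{f}_t(x,\theta)=(\mathbb{A}x,\tilde\phi_t(x,\theta))$, where $\tilde\phi_t(x,\theta)=\phi_t(x,\psi_t(x,\theta))$ and $\mathcal{Y}_t(x,\theta)=(x,\psi_t(x,\theta))$.

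For (K1), I would note that $\beta_2(0)=\beta_2(1)=0$ and every coefficient of $\mathcal{X}$ appearing in (\ref{equation:x}) involves one of $\sin(\pi\theta)$, $\sin(2\pi\theta)$ or $\beta_1(\theta)\sin(2\pi\theta)$, each vanishing at $\theta=0,1$; combined with the facts that the $T_{\nu_t}$-branch is used only where $\theta$ is near $0.5$ and that $f_{t,\nu_t}$ is fiberwise trivial on $\bigcup_i\mathbb{A}^i(U(C_2))$ for $\theta\in[0,\varepsilon)\cup(1-\varepsilon,1]$, the boundary tori $\mathbb{T}^2\times\{0,1\}$ are pointwise fixed in the fiber coordinate. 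For (K2), since $U'(q)\subset U(q)$ is disjoint from $U(r)\cup U(s)$, $\alpha_2$ vanishes on $\bar r\cup\bar s$, so $\tilde f_t=f_t$ on those fibers; on $\bar r$ the map reduces to the time-$t$ map of $\dot\theta=-\sin(\pi\theta)$, a Morse--Smale diffeomorphism with sink at $0$ and source at $1$, and symmetrically on $\bar s$. For (K3), both $f_t$ and $\mathcal{Y}_t$ are $C^1$-close to the identity in the fiber direction for small $t$, so $|\partial_\theta\tilde\phi_t|$ is uniformly close to $1$, which lies strictly between $\|\mathbb{A}^{-1}\|^{-1}$ and $\|\mathbb{A}\|$.

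The main step is (K4). Since $\psi_t(x,j)=j$ for $j=0,1$, the chain rule gives
$$\log|\partial_\theta\tilde\phi_t(x,j)|=\log|\partial_\theta\phi_t(x,j)|+t\,\alpha_2(x)\beta_2'(j),\qquad j=0,1,$$
where $\partial_\theta\psi_t(x,j)=e^{t\alpha_2(x)\beta_2'(j)}$ by linearisation of the fiber ODE at the fixed point $j$ (using $\beta_2'(0)=\beta_2'(1)=1$). Similarly, in each region where $f_t=(\mathbb{A}\times\id)\circ\mathcal{X}_t$ one has $\partial_\theta\phi_t(x,j)=e^{t\partial_\theta\mathcal{X}(x,j)}$; computing $\partial_\theta\mathcal{X}(x,0)$ from (\ref{equation:x}) yields $\pi\alpha_1(x)$ on $U(s)$, $-\pi\alpha_1(x)$ on $U(r)$, $-2\pi\alpha_1(x)$ on $(U_2\cup U(C))\setminus U(C_2)$, $0$ on $U(C_2)$ (since $\beta_1(0)=0$), and $0$ elsewhere (including on $\bigcup_{i\ge 1}\mathbb{A}^i(U(C_2))$, where $f_{t,\nu_t}$ is fiberwise trivial near $\theta=0$). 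Integrating over $\mathbb{T}^2$, the dominant negative term satisfies $-2\pi t\int_{U_2}\alpha_1\le -2\pi t\,\leb(U_1)<-\pi t$, while the positive terms are bounded by $t\pi\int_{U(s)}\alpha_1+t\int_{U'(q)}\alpha_2\le 2t\pi\varepsilon$, which is strictly smaller for the small $\varepsilon$ fixed in the construction; hence the required integral is strictly negative. The calculation at $\theta=1$ is symmetric, with the roles of $U(r)$ and $U(s)$ exchanged in sign.

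The principal obstacle is simply the bookkeeping across all perturbation domains: one has to verify that the negative contribution from the ``contracting" region $U_2$, which was designed precisely to enforce (K4), dominates the opposite-sign contributions arising from $U(s)$ at $\theta=0$ and $U(r)$ at $\theta=1$, as well as the positive contribution of the additional perturbation $\mathcal{Y}_t$ concentrated near the fiber $\bar q$. This is ensured by the smallness conditions $\leb(U(r)),\leb(U(s)),\leb(U(q))<\varepsilon$ and $\leb(U_1)>0.5$ imposed at the beginning of the construction.
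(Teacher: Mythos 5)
Your proof is correct and follows the same direct verification of (K1)--(K4) as the paper. You are in fact more careful than the paper's own (K4) computation, which omits the positive contribution $t\int_{U'(q)}\alpha_2\,dx$ coming from the linearisation of $\mathcal{Y}_t$ at $\theta=0,1$ (here $\beta_2'(0)=\beta_2'(1)=1$, so $\theta=0,1$ are sources of $\mathcal{Y}_t$ on $\bar q$); as you note, this term is bounded by $t\varepsilon$ and so does not affect the sign of the integral.
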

\begin{proof}
Denote by $\tilde{f}_t(x,\theta)=(\mathbb{A}x,\phi_{x,t}(\theta))$. Notice that the supports of $\mathcal{X}$ and $\mathcal{Y}$ are disjoint, the fiber diffeomorphisms $\phi_{x,t}$ can be written as the following for $\theta\in[0,\varepsilon)\cup(1-\varepsilon,1]$
\begin{equation}\label{equation:phi}
\phi_{x,t}(\theta)=\left\{\begin{array}{ll}
                         \mathcal{X}_{t}(x,\theta),& \textit{when}\ x\in U(r)\cup U(s)\cup U(C)\cup U_2\\
                         \mathcal{Y}_{t}(x,\theta),& \textit{when}\ x\in U(q).\\
                         \theta,&\textrm{otherwise}.
                         \end{array}\right.
\end{equation}
Now we are going to check the conditions $(K1)$--$(K4)$ in Definition~\ref{def:Kan's example with boundary} one by one.

By definition, for $i=0,1$, $\mathcal{X}(x,i)=0$ and $\mathcal{Y}(x,i)=0$, so $\phi_{x,t}(i)=i$. Hence condition $(K1)$ holds.

$\mathcal{X}(r,\theta)=-\sin(\pi\theta)\frac{\partial}{\partial \theta}$, so $\phi_{r,t}$ has exactly two fixed points, a sink at $\theta=0$ and a source at $\theta=1$. Similarly, $\mathcal{X}(s,\theta)=\sin(\pi\theta)\frac{\partial}{\partial \theta}$, so $\phi_{s,t}$ has exactly two fixed points, a sink at $\theta=1$ and a source at $\theta=0$. Hence condition $(K2)$ holds.

When $x\in U(r)\cup U(s)\cup U(C)\cup U_2$, $|\frac{\partial}{\partial\theta}\phi_{x,t}(\theta)|=\exp\int_0^t\frac{\partial}{\partial\theta}\mathcal{X}(x,\phi_{x,s}(\theta))ds\sim\exp(Ct)\rightarrow1(t\rightarrow0).$ Similar results hold when $x\in U(q)$. Thus $\tilde{f}_t$ is partially hyperbolic when $t$ is close to $0$. Hence condition $(K3)$ holds.

\begin{align}\nonumber
\int_{\mathbb{T}^2}\log\left|\frac{\partial}{\partial\theta}\phi_{x,t}(0)\right|dx&=\int_{U(r)}-\pi\alpha(x)tdx+\int_{U(s)}\pi \alpha(x) tdx+\int_{U_2\cup U(C)}-2\pi\alpha(x)tdx\\
&<\pi t(\leb(U(s)-2\leb(U_1)))<\pi t(\varepsilon-1)<0.\nonumber
\end{align}
Similarly, $\int_{\mathbb{T}^2}\log\left|\frac{\partial}{\partial\theta}\phi_{x,t}(1)\right|dx<\pi t(\varepsilon-1)<0$. Hence condition $(K4)$ holds. Now the proof of the proposition has been completed.
\end{proof}

Lastly, we will define a family of $C^{\infty}$-diffeomorphisms $\{K_t:\mathbb{T}^3\rightarrow\mathbb{T}^3\}_{t\geq0}$ by a reversing skill. Recall that $\mathbb{T}^3=\mathbb{T}^2\times\mathbb{S}^1$, and $\mathbb{S}^1=\mathbb{R}/2\mathbb{Z}$, we extend $\tilde{f}_t$ to $\hat{f}_t:\mathbb{T}^3\rightarrow\mathbb{T}^3$ by
\begin{equation}\label{equation:hat f}
\hat{f}_t(x,\theta)=\left\{\begin{array}{ll}
                 \tilde{f}_t(x,\theta)=(\mathbb{A}x,\phi_{x,t}(\theta)),&\theta\in[0,1]\\
                 (\mathbb{A}x,-\phi_{x,t}(-\theta)),&\theta\in[-1,0]
                 \end{array}\right.
\end{equation}
It is easy to see that $\hat{f}_t$ is the odd extension of $\tilde{f}_t$. And we remark here that $\hat{f}_t(t\geq0)$ are all smooth diffeomorphisms. This is because the odd extensions of $\mathcal{X}$ and $\mathcal{Y}$(denoted by $\hat{\mathcal{X}}$ and $\hat{\mathcal{Y}}$ respectively) are both smooth vector fields on $\mathbb{T}^3$, and $\hat{f}_t$ can be viewed as the smooth perturbations(supported on $\bigcup_{i=1}^{2n_0-1}\mathbb{A}^i(U(C))\times(0.5-\varepsilon,0.5+\varepsilon)$) of $\hat{\mathcal{X}}_t\circ\hat{\mathcal{Y}}_t$. At last, we define our examples $K_t(t\geq0)$ as the composition of $R=\id_{\mathbb{T}^2}\times-\id_{\mathbb{S}^1}:\mathbb{T}^2\times\mathbb{S}^1\rightarrow\mathbb{T}^2\times\mathbb{S}^1$ and $\hat{f}_t(t\geq0)$, i.e., $K_t=R\circ\hat{f}_t$. According to equation~\ref{equation:hat f}, we have
\begin{equation}\label{equation:K_t}
K_t(x,\theta)=\left\{\begin{array}{lr}
                     (\mathbb{A}x,-\phi_{x,t}(\theta)),&0\leq\theta\leq1.\\
                     (\mathbb{A}x,\phi_{x,t}(-\theta)),&-1\leq\theta\leq0.
                     \end{array}\right.
\end{equation}

From the arguments above and Proposition~\ref{prop:construction of kan on boundary manifolds}, we get the following proposition as wanted.
\begin{proposition}\label{prop:construction of kan on boundaryless manifold}
For any $t>0$ small enough, $K_t\in\mathscr{K}^{\infty}(\mathbb{T}^3)$.
\end{proposition}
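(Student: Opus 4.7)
The plan is to verify, for $t>0$ sufficiently small, that $K_t$ is a $C^{\infty}$ skew-product diffeomorphism over $\mathbb{A}$ whose fibre maps satisfy conditions (K1)--(K4) of Definition~\ref{def:Kan's example on boundaryless manifolds}. Smoothness and the skew-product form of $K_t$ have already been observed in the excerpt (since $\hat f_t$ is a smooth perturbation of $\hat{\mathcal{X}}_t\circ\hat{\mathcal{Y}}_t$ and $R=\id_{\mathbb{T}^2}\times(-\id_{\mathbb{S}^{1}})$ acts only on the fibre coordinate), so everything reduces to analysing the circle diffeomorphisms
\[
\psi_{x,t}(\theta) = \begin{cases} -\phi_{x,t}(\theta), & \theta\in[0,1], \\ \phi_{x,t}(-\theta), & \theta\in[-1,0], \end{cases}
\]
obtained from the fibre maps $\phi_{x,t}$ of $\tilde f_t$, which already lie in $\mathscr{K}^{\infty}(\mathbb{T}^2\times[0,1])$ by Proposition~\ref{prop:construction of kan on boundary manifolds}.

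Conditions (K1), (K3) and (K4) I would verify essentially by inspection. Since $\phi_{x,t}(0)=0$ and $\phi_{x,t}(1)=1$, we obtain $\psi_{x,t}(0)=0$ and $\psi_{x,t}(1)=-1\equiv 1$ in $\mathbb{S}^{1}=\mathbb{R}/2\mathbb{Z}$, which is (K1). Because $\psi_{x,t}$ is built from $\phi_{x,t}$ by pre- and post-composition with isometric reflections, $|\partial_\theta\psi_{x,t}(\theta)|=|\partial_\theta\phi_{x,t}(|\theta|)|$, so the pointwise bound (K3) transfers directly from the corresponding bound for $\phi_{x,t}$, and the integrals at $\theta=0$ and $\theta=1$ appearing in (K4) coincide with those already computed in the proof of Proposition~\ref{prop:construction of kan on boundary manifolds}.

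The real content is (K2). For $x=r$ and $x=s$ I would prove that $\psi_{x,t}$ is a Morse--Smale diffeomorphism of $\mathbb{S}^{1}$ with exactly two fixed points, located at $0$ and $1$, of the required sink/source type. The key observation is that $\psi_{x,t}$ interchanges the two half-circles $[0,1]$ and $[-1,0]$, and a direct computation gives
\[
\psi_{x,t}^{\,2}(\theta)=\phi_{x,t}^{\,2}(\theta)\qquad\textrm{for }\theta\in[0,1],
\]
with the symmetric identity on $[-1,0]$. Since $\phi_{r,t}$ is Morse--Smale on $[0,1]$ with fixed points only at the sink $0$ and the source $1$, the same is true for $\phi_{r,t}^{\,2}$; hence $\psi_{r,t}^{\,2}$ has exactly these two hyperbolic fixed points on $[0,1]$, so $\psi_{r,t}$ has no periodic points other than $0$ and $1$ on $\mathbb{S}^{1}$. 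The identities $|\psi_{r,t}'(0)|=|\phi_{r,t}'(0)|<1$ and $|\psi_{r,t}'(1)|=|\phi_{r,t}'(1)|>1$ then identify $0$ as a sink and $1$ as a source, with the symmetric picture holding for $x=s$; this is precisely (K2).

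The main obstacle is essentially bookkeeping: one has to pin down the fixed points of $\psi_{r,t}$ on $\mathbb{S}^{1}$ correctly, taking into account the identification modulo $2\mathbb{Z}$ and the fact that, although $\psi_{x,t}$ reverses orientation locally at each fixed point, the global dynamics is governed by $\phi_{x,t}^{\,2}$. Once the clean relation $\psi_{x,t}^{\,2}=\phi_{x,t}^{\,2}$ on each half-circle is in hand, Morse--Smaleness transfers automatically from $\phi_{x,t}$ to $\psi_{x,t}$, and the whole verification of (K2) reduces to properties already established for $\tilde f_t$ in Proposition~\ref{prop:construction of kan on boundary manifolds}.
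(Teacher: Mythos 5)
Your proposal is correct and takes essentially the same route as the paper: the paper states Proposition~\ref{prop:construction of kan on boundaryless manifold} as an immediate consequence of the construction and Proposition~\ref{prop:construction of kan on boundary manifolds}, i.e.\ a direct verification of (K1)--(K4) for the reversed map $K_t=R\circ\hat f_t$. Your observation that $\psi_{x,t}^{\,2}=\phi_{x,t}^{\,2}$ on each half-circle (and that this forces the fixed points $0$ and $1$ to be the only periodic points of $\psi_{r,t}$, $\psi_{s,t}$, with the correct hyperbolic type since $|\psi_{x,t}'(i)|=|\phi_{x,t}'(i)|$) simply fills in the small amount of bookkeeping the paper leaves implicit.
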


\section{Proof of the Theorem~\ref{theo:main theorem}}\label{sec:proofs}
Before we move on to the detailed proofs, we summarize here the main properties of the examples $\{K_t\}_{t\geq0}$ constructed in the previous section(see figure~\ref{fig:global}):
\begin{enumerate}
\item $K_0=\mathbb{A}\times-\id_{\mathbb{S}^1}.$ And for any $t>0$, $K_t\in\mathscr{K}^{\infty}(\mathbb{T}^3)$, i.e. $K_t$ satisfies item(K1)--(K4) in Definition~\ref{def:Kan's example on boundaryless manifolds}.
\item For any $t>0$, $K_t$ is a skew product reversing the orientation of the fibers, see equation~\ref{equation:K_t}.
\item For any $t>0$, $K_t$ admits a fixed fiber $\bar{p}=\{p\}\times\mathbb{S}^1$ which is Morse--Smale with exactly four fixed points, two sinks $P_0=(p,0),P_1=(p,1)$ and two sources $P=(p,0.5),P'=(p,-0.5)$. Moreover $K_t$ admits a blender-horseshoe with reference cube $\Gamma$(a small cube around $P$, contained in $\mathbb{T}^2\times(0,1)$) and reference saddles $P,O$, where $\Gamma,P,O$ are all independent of $t>0$. Symmetrically, $K_t$ admits a blender-horseshoe with reference cube $\Gamma'$(a small cube around $P'$, contained in $\mathbb{T}^2\times(-1,0)$) and reference saddles $P',O'$, where $\Gamma',P',O'$ are all independent of $t>0$.
\item For any $t>0$, $K_t$ admits a fixed fiber $\bar{q}=\{q\}\times\mathbb{S}^1$ which is Morse--Smale with exactly four fixed points, two sources $Q_0=(q,0),Q_1=(q,1)$ and two sinks $Q=(q,\theta_0),Q'=(q,-\theta_0)$. Moreover, $W^u(Q,K_t)$ contains a $u$-disk which belongs to the superposition region of the $cu$-blender $\Lambda_t=\cap_{n\in\mathbb{Z}}K_t^n(\Gamma)$. Symmetrically, $W^u(Q',K_t)$ contains a $u$-disk which belongs to the superposition region of the $cu$-blender $\Lambda'_t=\cap_{n\in\mathbb{Z}}K_t^n(\Gamma')$.

\end{enumerate}
\subsection{Robust transitivity}
In this subsection, we will prove that the examples constructed in the previous section is robustly transitive. The following criterion will be used.
\begin{lemma}[\cite{BDV}, Lemma~7.3]\label{lem:criterium of robust transitivity}
Let $f:M\rightarrow M$ be a diffeomorphism, and $p$ be a hyperbolic periodic point. Suppose that invariant manifolds of $\orb(p)$ are both (robustly) dense in $M$. Then $f$ is (robustly) transitive.
\end{lemma}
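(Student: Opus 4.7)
The plan is to use the standard Palis $\lambda$-lemma (inclination lemma) to convert density of the invariant manifolds of $\orb(p)$ into topological transitivity. It is enough to show that for any two nonempty open sets $U, V \subset M$ there is $n > 0$ with $f^n(U) \cap V \neq \emptyset$; I would handle the parenthetical robust version at the end, by invoking persistence of hyperbolic continuations and the $C^1$ continuity of local invariant manifolds.

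Write $\orb(p) = \{p_1, \dots, p_k\}$ with $k$ the period. My first step would be to use density of $W^s(\orb(p))$ to pick $x \in U \cap W^s(p_j)$ for some $j$, shrink $U$ to a small neighborhood of $x$, and choose inside it a $C^1$-embedded disk $D^u$ of dimension $\dim E^u(p)$ through $x$ that is transverse to $W^s(p_j)$ at $x$. Symmetrically, density of $W^u(\orb(p))$ supplies $y \in V \cap W^u(p_i)$ and a small $C^1$-embedded disk $D^s \subset V$ of dimension $\dim E^s(p)$ through $y$, transverse to $W^u(p_i)$ at $y$.

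Next I would apply Palis' $\lambda$-lemma to $f^k$, which fixes each $p_\ell$: iterates $(f^k)^n(D^u)$ converge to $W^u_{loc}(p_j)$ in the $C^1$ topology on compact subsets. Composing with $f^{i-j}$ transports the accumulating disks from a neighborhood of $p_j$ to one of $p_i$, so $f^{i-j+nk}(D^u)$ contains subdisks arbitrarily $C^1$-close to a fixed compact piece of $W^u_{loc}(p_i)$ for all large $n$. The symmetric $\lambda$-lemma argument for $f^{-k}$ applied to $D^s$ produces, for large $m$, subdisks of $f^{-mk}(D^s)$ arbitrarily $C^1$-close to a fixed compact piece of $W^s_{loc}(p_i)$. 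Since $W^u_{loc}(p_i)$ and $W^s_{loc}(p_i)$ meet transversely at $p_i$, openness of transverse intersection then forces the approximating disks to meet for all $n, m$ sufficiently large, which after unwinding gives $f^{i-j+(n+m)k}(U) \cap V \neq \emptyset$.

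For the robust version, the hyperbolic continuation $p(g)$ and its local stable/unstable manifolds vary continuously in $g$ in the $C^1$ topology, so the entire construction runs uniformly over any $C^1$-neighborhood of $f$ on which density of $W^s(\orb_g(p(g)))$ and $W^u(\orb_g(p(g)))$ is assumed to persist. The step I expect to need the most care is this final open-transversality step: I must \emph{first} fix the compact pieces of the local invariant manifolds and a definite transverse neighborhood of $p_i$, and only \emph{then} let $n, m \to \infty$, so that the $C^1$-closeness supplied by the $\lambda$-lemma genuinely converts into an honest intersection of the approximating disks rather than a merely asymptotic statement.
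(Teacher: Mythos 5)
Your proof is correct. The paper supplies no argument for this statement, merely citing \cite{BDV} (Lemma~7.3), and what you have written is exactly the standard $\lambda$-lemma proof that such a citation points to; the robust parenthetical is, as you say, immediate from hyperbolic continuation once the pointwise statement is in hand. One simplification worth noting: the pincer can be made one-sided. Once the $\lambda$-lemma forces $f^{i-j+nk}(D^u)$ to accumulate in the $C^1$ topology on arbitrary compact pieces of $W^u_{loc}(p_i)$, and hence (pushing forward by fixed powers of $f$) on arbitrary compact pieces of $W^u(\orb(p))$, density of $W^u(\orb(p))$ alone gives a compact piece of it inside $V$ and therefore a forward iterate of $D^u$ entering $V$; there is no need to also iterate $D^s$ backward and appeal to the transversality of $W^u_{loc}(p_i)$ and $W^s_{loc}(p_i)$. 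Your two-sided version is nonetheless fully correct, and the caveat you flag at the end --- fix the compact local pieces and the transverse configuration near $p_i$ first, and only then send $n,m\to\infty$ --- is precisely where a careless version of this argument would break down, so it is good that you isolated it.
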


The following lemma can be deduced from Theorem~7.1 of \cite{HPS} directly.
\begin{lemma}[\cite{HPS}]\label{lem:openness of skew product}
Let $M,N$ be compact boundaryless manifolds, and $F:M\times N\rightarrow M\times N, (x,y)\mapsto(f(x),\phi_x(y))$ be a normally hyperbolic skew--product $C^1$-diffeomorphism. Assume that $f:M\rightarrow M$ is Anosov, then there exists a $C^1$-neighbourhood $\mathcal{U}$ of $F$ in $\diff^1(M\times N)$, such that for any $G\in\mathcal{U}$,
\begin{enumerate}
\item $G$ is conjugate by $\Phi_G$ to a skew--product $G^*:(x,y)\rightarrow(f(x),\phi^G_x(y))$, which depends continuously on $G$.
\item For any $x\in M$, $\Phi_G^{-1}(\{x\}\times N)$ is diffeomorphic to $N$, and $C^1$-close to $\Phi_F^{-1}(\{x\}\times N)=\{\phi^{-1}(x)\}\times N$.
\end{enumerate}
\end{lemma}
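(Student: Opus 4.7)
The plan is to realize the trivial fibration $\{\{x\}\times N : x\in M\}$ as a normally hyperbolic $F$-invariant foliation, and then invoke the plaque-persistence portion of Theorem~7.1 of \cite{HPS} to transport it to a nearby $G$. Because $f$ is Anosov, the hypothesis ``normally hyperbolic skew-product'' gives that the contraction/expansion rates of $Df$ on $TM$ strictly dominate those of the fiber derivatives $D\phi_x$ on $TN$. HPS then yields, for every $G$ in a sufficiently small $C^1$-neighborhood $\mathcal{U}$ of $F$, a $G$-invariant continuous foliation $\mathcal{F}_G$ of $M\times N$ by $C^1$ compact leaves diffeomorphic to $N$, with each leaf $C^1$-close to one of the original fibers $\{x\}\times N$, and with the assignment $G\mapsto\mathcal{F}_G$ continuous in an appropriate topology.

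Next I would handle the base dynamics. The leaf space of $\mathcal{F}_G$ is a topological manifold $M_G$ homeomorphic to $M$, and $G$ descends to a homeomorphism $\bar G: M_G\to M_G$ which is $C^0$-close to $\bar F = f$. By structural stability of the Anosov diffeomorphism $f$, there is a homeomorphism $h_G: M \to M_G$, close to the natural identification, conjugating $f$ to $\bar G$. Label the leaves accordingly: for $x\in M$ let $L^G_x$ be the leaf of $\mathcal{F}_G$ corresponding to $h_G(x)$; then $G(L^G_x) = L^G_{f(x)}$. Because each $L^G_x$ is a graph over $\{x\}\times N$ in a tubular neighborhood for $G$ sufficiently $C^1$-close to $F$, one can choose, continuously in $G$ and $x$, a diffeomorphism $\psi_{x,G}: \{x\}\times N\to L^G_x$ that is $C^1$-close to inclusion. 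Setting $\Phi_G(x,y) := \psi_{x,G}(x,y)$, the conjugate $G^* := \Phi_G^{-1}\circ G\circ \Phi_G$ sends $\{x\}\times N$ to $\{f(x)\}\times N$, hence has the skew-product form $G^*(x,y) = (f(x), \phi^G_x(y))$ claimed in item~(1). The $C^1$-closeness assertion of item~(2) is read off directly from the HPS estimate on leaves (up to the natural identification of the leaf parameter via $h_G$).

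The genuine difficulty, and the main obstacle, is the base-matching step: HPS alone produces the invariant foliation and tells us that $G$ permutes leaves, but gives no canonical identification of the leaf space of $\mathcal{F}_G$ with $M$. Without the structural stability of $f$ one would only obtain a skew product over a map topologically conjugate to $f$, not over $f$ itself; it is precisely the Anosov structural stability that allows the base of $G^*$ to be the \emph{same} $f$. The continuity of $G\mapsto(\Phi_G, G^*)$ then follows by combining the continuous dependence of $\mathcal{F}_G$ on $G$ from HPS with the continuous dependence of the Anosov conjugacy $h_G$ on the $C^0$-small perturbation $\bar G$ of $f$.
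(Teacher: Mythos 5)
The paper offers no written proof and simply cites Theorem~7.1 of \cite{HPS} ``directly,'' so you are filling in the argument on your own. Your overall framework (persist the fiber foliation, label its leaves by $M$, build $\Phi_G$ from graph maps over the fibers) is the right outline, but the base-matching step contains a genuine gap.

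You claim the descended map $\bar G$ on the leaf space of $\mathcal{F}_G$ is $C^0$-close to $f$ and then invoke \emph{structural stability} of the Anosov diffeomorphism $f$ to produce a conjugacy $h_G$. This does not work: structural stability of Anosov diffeomorphisms requires the perturbation to be a $C^1$-small $C^1$-diffeomorphism; it is false for $C^0$-small perturbations, and $\bar G$ is in general only a homeomorphism of a topological manifold. (The best one gets from $C^0$-closeness is Walters' topological stability, which yields only a semi-conjugacy, not a conjugacy.) So the step ``by structural stability of $f$ there is a homeomorphism $h_G$ conjugating $f$ to $\bar G$'' is unjustified, and your own closing remark — that without Anosov structural stability one would only get a skew product over a map topologically conjugate to $f$ — signals a misreading of what HPS Theorem~7.1 actually supplies. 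The theorem's conclusion is not merely ``a persistent invariant foliation''; it is a \emph{leaf conjugacy}: a homeomorphism $h_G$ of $M\times N$ carrying $\mathcal{F}$ to $\mathcal{F}_G$ and satisfying $h_G\bigl(F(L)\bigr)=G\bigl(h_G(L)\bigr)$ for every leaf $L$. Since the leaf space of $\mathcal{F}$ is $M$ and the induced leaf-space map of $F$ is exactly $f$, this leaf conjugacy already identifies the leaf space of $\mathcal{F}_G$ with $M$ in a way that makes $G$ induce $f$ on leaves. Taking $\Phi_G$ to be (a fiberwise smoothing of) $h_G$ finishes items (1) and (2); no separate structural-stability argument is needed or available. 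What the Anosov hypothesis on $f$ is actually used for — and what your proposal does not address — is to verify the \emph{plaque expansivity} hypothesis of HPS Theorem~7.1 for the compact-leaf fiber foliation of $F$, which holds because the induced map $f$ on the leaf space is expansive.
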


Recall that for any $x\in\mathbb{T}^2$, we define $\bar{x}$ to be the fiber $\{x\}\times\mathbb{S}^1$. Using Lemma~\ref{lem:openness of skew product}, there exists a $C^1$-neighbourhood $\mathscr{U}_t$ of $K_t$ such that for any $g\in\mathcal{U}_t$ and any $x\in\mathbb{T}^2$, we can define the continuation of $\bar{x}$ to be $\Phi_g^{-1}(\{x\}\times\mathbb{S}^1)$, and denote it by $\bar{x}_g$.

\begin{proposition}\label{prop:robust transitivity}
Let $\{K_t\}_{t\geq0}$ be as above. Then for every $t>0$ small enough, $K_t$ is robustly transitive, i.e., there exists a $C^1$-neighbourhood $\mathcal{U}_t$ of $K_t$, such that every $g\in\mathcal{U}_t$ is transitive.
\end{proposition}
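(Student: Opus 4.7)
The plan is to apply Lemma~\ref{lem:criterium of robust transitivity} to the hyperbolic periodic orbit $\orb(Q)=\{Q,Q'\}$: it suffices to show, for every $g$ in a small $C^1$-neighbourhood $\mathcal{U}_t$ of $K_t$, that both $W^u(\orb(Q_g),g)$ and $W^s(\orb(Q_g),g)$ are dense in $\mathbb{T}^3$. The map $K_t$ interchanges the halves $\mathbb{T}^2\times[0,1]$ and $\mathbb{T}^2\times[-1,0]$, so each half is $g^2$-invariant for $g\in\mathcal{U}_t$. By the symmetry between the constructions on $(P,Q,\Lambda_t)$ and $(P',Q',\Lambda'_t)$, it is enough to establish density of $W^u(Q_g,g^2)$ and $W^s(Q_g,g^2)$ in the upper half $\mathbb{T}^2\times[0,1]$; density in the lower half follows from the parallel construction, and together they give density in $\mathbb{T}^3$.

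For $W^u(Q_g,g^2)$, I would combine the $cu$-blender with Palis' $\lambda$-lemma. The strong-unstable segment $\gamma_t\subset W^u(Q,K_t)$ contains by construction a vertical disk in the superposition region $\mathcal{D}_t$ of $\Lambda_t$; by Lemma~\ref{lem:blenderhorseshoe and cu blender}(1), the openness of $\mathcal{D}_t$, and $C^1$-continuity of compact pieces of unstable manifolds, the analogous configuration persists in $W^u(Q_g,g)$ for every $g\in\mathcal{U}_t$. The blender property produces a transverse intersection of this vertical disk with $W^s_{loc}(\Lambda_g)$; transitivity of $\Lambda_g$ and the density of $W^s(P_g,g^2)$ in $W^s_{loc}(\Lambda_g)$ then yield a transverse intersection of $W^u(Q_g,g^2)$ with $W^s(P_g,g^2)$, and Palis' $\lambda$-lemma gives $\overline{W^u(Q_g,g^2)}\supset W^u(P_g,g^2)$. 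Since $P_g$ is a source of the Morse--Smale fiber dynamics on $\bar{p}_g$, the $2$-dimensional center-unstable manifold $W^u(P_g,g^2)$ contains the entire open arc $(P_{0,g},P_{1,g})\subset\bar{p}_g$, and taking its strong-unstable saturation along the dense leaf $W^u_{\mathbb{A}}(p)\subset\mathbb{T}^2$ produces a set dense in $\mathbb{T}^2\times[0,1]$.

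The density of $W^s(Q_g,g^2)$ is handled dually: $Q_g$ is a sink of the Morse--Smale fiber dynamics on $\bar{q}_g$, so $W^s(Q_g,g^2)$ already contains the open arc $(Q_{0,g},Q_{1,g})\subset\bar{q}_g$; its strong-stable saturation projects onto the dense leaf $W^s_{\mathbb{A}}(q)$ and gives density in $\mathbb{T}^2\times[0,1]$ by the same type of holonomy argument. The symmetric statements on the lower half, coming from the blender $\Lambda'_t$ and the periodic orbit $\{P',Q'\}$, then complete the verification of the hypotheses of Lemma~\ref{lem:criterium of robust transitivity}.

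The principal obstacle is the final step in each density assertion: showing that the strong-(un)stable saturation of an open arc in the invariant fiber $\bar{p}_g$ or $\bar{q}_g$ is dense in $\mathbb{T}^2\times[0,1]$ in the three-dimensional sense, rather than merely having dense projection onto $\mathbb{T}^2$. This requires exploiting that $K_t$ is a smooth perturbation of the product $K_0=\mathbb{A}\times(-\id_{\mathbb{S}^1})$, whose strong foliations preserve the $\theta$-coordinate, together with $C^0$-continuity of the strong foliations of $g$ over a compact fundamental domain, to conclude that above a dense set of $y$ in the relevant $\mathbb{A}$-leaf, the fiber intersection of the saturation with $\bar{y}_g$ contains an arc of length close to $1$. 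The smallness of $t$ and the uniformity of the $C^1$-perturbation in $\mathcal{U}_t$ should make this quantitative estimate tractable.
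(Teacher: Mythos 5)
There is a genuine gap in your argument for density of $W^u(\orb(Q_g),g)$. You want to apply the inclination lemma to conclude $\overline{W^u(Q_g,g^2)}\supset W^u(P_g,g^2)$, but the dimensions do not match: since $Q_g$ is a center-sink on the fiber $\bar q_g$, its unstable manifold $W^u(Q_g,g^2)=W^{uu}(Q_g,g^2)$ is one-dimensional, while $P_g$ is a center-source, so $W^u(P_g,g^2)$ is two-dimensional. Forward iterates of a one-dimensional disk cannot $C^1$-accumulate on a two-dimensional unstable manifold; at best they would accumulate on the one-dimensional strong-unstable leaf $W^{uu}(P_g,g^2)$, which is not dense in the center direction. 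Moreover, the blender property only guarantees that the vertical disk in $W^u(Q_g,g^2)$ meets $W^s_{loc}(\Lambda_g)$, i.e.\ the local stable lamination of the whole hyperbolic set; it does not force the intersection to occur on the particular stable leaf $W^s(P_g,g^2)$, so the claimed transverse intersection of $W^u(Q_g,g^2)$ with $W^s(P_g,g^2)$ is unjustified. In short, the blender mechanism transfers \emph{stable} density (as in Lemma~\ref{lem:blender}, which yields $W^s(Q_g,g^2)\subset\overline{W^s(P_g,g^2)}$), not unstable density; choosing $\orb(Q)$ as the base orbit forces you into the wrong direction.

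The paper sidesteps this entirely by applying Lemma~\ref{lem:criterium of robust transitivity} to $\orb(P)$ rather than $\orb(Q)$. For $\orb(P)$, unstable density is immediate and does not need the blender: since $P,P'$ are the two center-sources in the periodic circle $\bar p$, one has $\overline{W^u(\orb(P),K_t)}=\overline{W^u(\bar p,K_t)}=\TT^3$, and this is robust by the leaf conjugacy of Lemma~\ref{lem:openness of skew product}. Stable density of $\orb(P)$ is then what the blender is used for: $\overline{W^s(\orb(Q_g),g)}=\overline{W^s(\bar q_g,g)}=\TT^3$ (again by normal hyperbolicity), and Lemma~\ref{lem:blender} gives $W^s(Q_g,g^2)\subset\overline{W^s(P_g,g^2)}$, $W^s(Q'_g,g^2)\subset\overline{W^s(P'_g,g^2)}$, hence $\overline{W^s(\orb(P_g),g)}\supset\overline{W^s(\orb(Q_g),g)}=\TT^3$. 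Your $W^s(\orb(Q_g))$ paragraph is essentially the same as this last step in the paper, and your concluding remark about proving full three-dimensional density of strong saturations is handled cleanly by the normal hyperbolicity/leaf-conjugacy lemma, so that part of your concern is well placed but resolvable; the real obstruction is the unstable half, which your choice of base orbit cannot close.
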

\begin{proof}
We will prove the proposition by using the argument of Lemma~\ref{lem:criterium of robust transitivity}. In fact, we only have to show that
\begin{claim}
$W^u(\orb(P),K_t)$ and $W^s(\orb(P),K_t)$ are both robustly dense in $\mathbb{T}^3$.
\end{claim}
\noindent\emph{Proof of the claim:} Denote $P'=(p,-0.5)$, then $\orb(P)=\{P,P'\}$. Similarly, denote $Q'=(q,-\theta_0)$, then $\orb(Q,K_t)=\{Q,Q'\}$. Firstly, note that
$$\overline{W^u(\orb(P),K_t)}=\overline{W^u(P,K_t)}\cup\overline{W^u(P',K_t)}=\overline{W^u(\bar{p},K_t)}=\mathbb{T}^3.$$
Since $K_t$ is normally hyperbolic skew product diffeomorphism, we can use Lemma~\ref{lem:openness of skew product} to conclude that there exists a $C^1$-neighbourhood $\mathcal{U}_t$ of $K_t$ such that $\overline{W^u(\bar{p}_g,g)}=\mathbb{T}^3$ still holds for every $g\in\mathcal{U}_t,$ and hence $\overline{W^u(\orb(P_g),g)}=\mathbb{T}^3$. Analogous argument shows that $\overline{W^s(\bar{q}_g,g)}=\mathbb{T}^3$ holds for every $g\in\mathcal{U}_t,$ and hence $\overline{W^s(\orb(Q_{g}),g)}=\mathbb{T}^3.$

According to the construction, $(\Lambda_t,K_t^{2n_0})$ is a blender-horseshoe with reference cube $\Gamma$ and reference saddles $P$ and $O$. And $W^u(Q,K_t^2)$ contains a vertical disk that belongs to the superposition region $\mathcal{D}_t$. Then by using of Lemma~\ref{lem:blenderhorseshoe and cu blender} and Lemma~\ref{lem:blender}, we have $W^s(Q_{g},g^2)\subset \overline{W^s(P_g,g^2)}.$ Symmetrically, we have $W^s(Q'_{g},g^2)\subset\overline{W^s(P'_g,g^2)}.$ Therefore,
\begin{align}
\overline{W^s(\orb(P_g),g)}&=\overline{W^s(P_g,g^2)}\cup\overline{W^s(P'_g,g^2)}\nonumber\\
                           & \supset\overline{W^s(Q_{g},g^2)}\cup\overline{W^s(Q'_{g},g^2)}\nonumber\\
                           &=\overline{W^s(\orb(Q_{g}),g)}=\mathbb{T}^3.\nonumber
\end{align}

The proof of the claim has now been finished, thus the proof of the proposition has been finished too.
\end{proof}

\subsection{Having two physical measures means Kan--like}
In this subsection, we will prove Theorem \ref{theo:two-pm}. First we need a lemma for Kan's examples on $\mathbb{T}^3$.

\begin{lemma}\label{lem:trans-inters}
For every $f\in\mathscr{K}^{1+}(\mathbb{T}^3)$, denote $R_0=(r,0)$ and $S_1=(s,1)$ to be two fixed point of $f$ with indices 2 from the definition (K2) of Kan's examples. Then there exists a $C^1$-neighborhood ${\cal U}$ of $f$, and a constant $L>2$, such that for any $g\in{\cal U}$ and $X\in\mathbb{T}^3$, we have
$$
W^{uu}_L(X,g)\pitchfork W^s_L(R_{0,g},g)\neq\varnothing, \qquad or \qquad
W^{uu}_L(X,g)\pitchfork W^s_L(S_{1,g},g)\neq\varnothing.
$$
\end{lemma}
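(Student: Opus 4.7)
My plan is to establish the lemma first for $g=f$, then extend to a $C^1$-neighborhood $\mathcal{U}$ using continuous dependence of invariant manifolds together with the $C^1$-openness of transverse intersections. The first step identifies the two stable manifolds explicitly. Because $f$ is a skew product over $\mathbb{A}$, its center-stable foliation coincides with $\{W^s(x,\mathbb{A})\times\mathbb{S}^1:x\in\mathbb{T}^2\}$, and both tori $\mathbb{T}^2\times\{0,1\}$ are $f$-invariant by (K1). Inside the cylinder $\mathscr{C}_r:=W^s(r,\mathbb{A})\times\mathbb{S}^1$ the dynamics of $f$ is a skew product over the contracting $\mathbb{A}|_{W^s(r,\mathbb{A})}$ whose fiber maps converge to the Morse--Smale $\phi(r,\cdot)$; by (K2) this makes $R_0$ a 2-dimensional sink in $\mathscr{C}_r$ and forces
\[
W^s(R_0,f)=\mathscr{C}_r\setminus\big(W^s(r,\mathbb{A})\times\{1\}\big),
\]
and symmetrically $W^s(S_1,f)=\mathscr{C}_s\setminus\big(W^s(s,\mathbb{A})\times\{0\}\big)$ with $\mathscr{C}_s:=W^s(s,\mathbb{A})\times\mathbb{S}^1$; each stable manifold is thus the corresponding $cs$-cylinder minus a $1$-dimensional forbidden curve lying in one of the invariant tori.

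Next I would relate $uu$-arcs of $f$ to $\mathbb{A}$-unstable arcs on the base. The domination $|\partial_\theta\phi|<\|\mathbb{A}\|$ from (K3) makes $\mathcal{F}^{uu}_f$ a graph foliation over $\mathcal{F}^u_\mathbb{A}$, yielding a uniform constant $C$ so that every $uu$-arc of length $L$ projects under $\pi$ to an unstable arc of $\mathbb{A}$ of length at least $L/C$, together with a uniform $K<1$ bounding the $\partial_\theta$-component of any unit tangent vector to $\mathcal{F}^{uu}_f$ (the bound $K<1$ follows from compactness of $\mathbb{T}^3$ plus transversality of $E^{uu}$ to $E^c$). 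Minimality of the Anosov unstable foliation on $\mathbb{T}^2$ then gives a constant $\rho=\rho(\mathbb{A})>0$ such that any unstable arc of length $\ell$ transversally crosses $W^s_1(r,\mathbb{A})\cup W^s_1(s,\mathbb{A})$ at least $\rho\ell-O(1)$ times, and by the equidistribution of unstable arcs on the torus the two types of crossings interleave at a definite positive rate.

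The crux is to rule out, for $L$ large, the pathological configuration in which every intersection $\gamma\cap\mathscr{C}_r$ sits at $\theta=1$ and every intersection $\gamma\cap\mathscr{C}_s$ sits at $\theta=0$ for $\gamma:=W^{uu}_L(X,f)$. In that configuration the fiber coordinate $\theta$ along $\gamma$ would have to jump between $0$ and $1$ each time $\pi(\gamma)$ alternates between a $W^s_1(r,\mathbb{A})$-crossing and a $W^s_1(s,\mathbb{A})$-crossing, consuming total variation at least $1$ per alternation. Since the total variation of $\theta$ along $\gamma$ is at most $KL$ while the number of forced alternations grows linearly in $L$ with coefficient determined by $\rho$, for $L$ sufficiently large the two linear counts become incompatible; thus $\gamma$ meets either $W^s(R_0,f)$ or $W^s(S_1,f)$ transversally within a length-$L$ window. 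With $L$ fixed, shrink $\mathcal{U}$ so that partial hyperbolicity, $\mathcal{F}^{uu}_g$, the hyperbolic continuations $R_{0,g},R_{1,g},S_{0,g},S_{1,g}$ and their local stable manifolds all vary $C^0$-continuously with $g\in\mathcal{U}$; transverse intersections survive and give the lemma for every $g\in\mathcal{U}$.

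The main obstacle I anticipate is the quantitative matching of the two linear growth rates in paragraph three: one must verify that the slope bound $K<1$ extracted from (K3) is genuinely dominated by the Anosov alternation rate $\rho$ for large $L$. When the direct comparison is tight (for instance when $\|\partial_\theta\phi\|_\infty$ is close to $\|\mathbb{A}\|$), a more careful count is required, exploiting the full Anosov expansion gap $\|\mathbb{A}\|\cdot\|\mathbb{A}^{-1}\|>1$ in (K3) through either a high-iterate refinement $f^n$ of the base or an equidistribution estimate controlling the rate of $r/s$-type alternations along $\pi(\gamma)$; the $f$-invariance of $W^s(R_0,f)$ and $W^s(S_1,f)$ then allows pulling any crossing produced at a high iterate back to a length-$L$ window of $\gamma$.
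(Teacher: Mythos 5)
There is a genuine gap in your third paragraph, and it is the one you yourself flag. The quantitative comparison between the slope bound $K<1$ and the alternation rate $\rho$ has no reason to come out the right way: condition (K3) only gives $\|\mathbb{A}^{-1}\|^{-1}<|\partial_\theta\phi|<\|\mathbb{A}\|$, so the $E^{uu}$-cone can be arbitrarily wide and $K$ arbitrarily close to $1$, while nothing in the definition forces $\rho$ to be large. Your proposed remedies do not close the gap: passing to $f^n$ does not change $K$ because $K$ is a property of the strong unstable foliation itself, not of the map, and $\mathcal{F}^{uu}_{f^n}=\mathcal{F}^{uu}_f$; an equidistribution estimate still only produces a rate $\rho$ depending on $\mathbb{A}$, with no reason to exceed $K$.

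The deeper point is that the entire variation count is unnecessary, because the ``pathological configuration'' is impossible for a qualitative reason already visible in your second paragraph. The forbidden curves $W^s(r,\mathbb{A})\times\{1\}$ and $W^s(s,\mathbb{A})\times\{0\}$ lie inside the invariant tori $T_1=\mathbb{T}^2\times\{1\}$ and $T_0=\mathbb{T}^2\times\{0\}$. By (K1) one has $\phi(\cdot,0)\equiv 0$ and $\phi(\cdot,1)\equiv 1$, so $D_x\phi$ vanishes along these tori and their tangent bundles are $Df$-invariant; together with the domination in (K3) this forces $E^{uu}_f|_{T_i}=E^u_{\mathbb{A}}$, so $T_0$ and $T_1$ are $u$-saturated. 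Consequently any single strong unstable leaf either lies entirely in $T_0$, or entirely in $T_1$, or is disjoint from both --- it can never meet both tori, so even a \emph{single} alternation in your sense is impossible. In the first case every crossing of $\mathscr{C}_r$ has $\theta=0\neq 1$ and the leaf meets $W^s(R_0,f)$; in the second every crossing of $\mathscr{C}_s$ has $\theta=1\neq 0$ and the leaf meets $W^s(S_1,f)$; in the third every crossing of $\mathscr{C}_r$ has $\theta\neq 1$, and again the leaf meets $W^s(R_0,f)$. All these intersections are automatically transverse since $\mathcal{F}^{uu}$ is uniformly transverse to the $cs$-cylinders. The uniform $L$ then follows from compactness of $\mathbb{T}^3$: the bad sets $B_L=\{X:\ W^{uu}_L(X,f)\pitchfork W^s_L(R_0,f)=\varnothing\ \text{and}\ W^{uu}_L(X,f)\pitchfork W^s_L(S_1,f)=\varnothing\}$ are closed, decreasing in $L$, and have empty intersection by the above, so $B_L=\varnothing$ for some finite $L$; the extension from $f$ to a $C^1$-neighborhood $\mathcal{U}$ is by $C^1$-openness of transverse intersection and continuity of local invariant manifolds, exactly as in your last paragraph. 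This qualitative reading is also what the paper's terse proof (``from the definition of $f$, we only need to take $L$ large enough'') is pointing at.
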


\begin{proof}
From the definition of $f$, we only need to take the constant $L$ large enough, then at least one of these two properties holds. Since the transverse intersection is an open property, and the stable(unstable) manifolds of uniform size $L$ vary continuously with respect to the diffeomorphism, we have proved this lemma.
\end{proof}

\begin{lemma}\label{lem:kan is mcc}
For every $f\in\mathscr{K}^{1+}(\mathbb{T}^3)$, $f$ has mostly contracting center.
\end{lemma}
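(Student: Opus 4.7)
The plan is to verify the equivalent characterization from Proposition~\ref{prop:mcc equivalent property}, namely that every ergodic Gibbs $u$-state of $f$ has negative center Lyapunov exponent. First I would identify two ``obvious'' candidates: by condition (K1) the tori $T_0=\mathbb{T}^2\times\{0\}$ and $T_1=\mathbb{T}^2\times\{1\}$ are $f$-invariant and $f|_{T_i}=\mathbb{A}$; the normalized Lebesgue measures $\mu_0,\mu_1$ on these tori are $f$-invariant, ergodic, and are Gibbs $u$-states (since Lebesgue is the unique Gibbs $u$-state of the linear Anosov $\mathbb{A}$). Their center Lyapunov exponents are
\[
\lambda^c(\mu_i)=\int_{\mathbb{T}^2}\log|\partial_\theta\phi(x,i)|\,dx<0
\]
by condition (K4).

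Next, I would show that any ergodic Gibbs $u$-state $\mu$ of $f$ has its support containing $T_0$ or $T_1$. By Proposition~\ref{prop:gibbs ustate basic properties}(2), $\supp\mu$ is $u$-saturated. Pick $X\in\supp\mu$; applying Lemma~\ref{lem:trans-inters} to $g=f$, the local strong unstable disk $W^{uu}_L(X,f)$ meets either $W^s_L(R_0,f)$ or $W^s_L(S_1,f)$ transversally. Any such intersection point $Y$ lies in $\supp\mu$ (by $u$-saturation), and since $f^n(Y)$ converges to $R_0$ (resp.\ $S_1$), the closedness and forward invariance of $\supp\mu$ force $R_0\in\supp\mu$ (WLOG). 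Using $u$-saturation once more, $W^u_f(R_0)\subset\supp\mu$; but because $T_0$ is $f$-invariant and the partially hyperbolic splitting on $T_0$ has $E^u$ tangent to $T_0$, this strong unstable leaf equals $W^u_{\mathbb{A}}(r)\times\{0\}$, which is dense in $T_0$ by density of unstable leaves of the linear Anosov $\mathbb{A}$. Therefore $T_0\subset\supp\mu$.

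Since $T_0$ is an $f$-invariant closed set, ergodicity gives $\mu(T_0)\in\{0,1\}$. In the case $\mu(T_0)=1$, the restriction $\mu|_{T_0}$ is an $\mathbb{A}$-invariant probability measure on $T_0$ whose conditionals along the unstable foliation of $f$ (which on $T_0$ coincides with that of $\mathbb{A}$) are Lebesgue; hence $\mu|_{T_0}$ is a Gibbs $u$-state of $\mathbb{A}$, and by uniqueness for linear Anosov maps, $\mu=\mu_0$, giving $\lambda^c(\mu)=\lambda^c(\mu_0)<0$. The analogous argument handles the case $S_1\in\supp\mu$ and yields $\mu=\mu_1$.

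The main obstacle is ruling out the possibility $\mu(T_0)=0$ (and symmetrically $\mu(T_1)=0$) while $T_0\subset\supp\mu$. The plan here is as follows: by ergodicity and $T_0\subset\supp\mu$, $\mu$-a.e.\ orbit visits every neighborhood of $T_0$ with positive frequency; on a small neighborhood of $T_0$ the fibered derivative $\log|\partial_\theta\phi(x,\theta)|$ is uniformly close to $\log|\partial_\theta\phi(x,0)|$, whose integral against Lebesgue on $\mathbb{T}^2$ is strictly negative by (K4). Combined with an invariance-principle / Pesin-theoretic argument on the $u$-saturated set $\supp\mu$, a positive-measure set of points in any $u$-disk intersecting a small neighborhood of $T_0$ produces orbits with negative center Lyapunov contribution and these points asymptotically converge to $T_0$; this forces $\mu$-generic orbits to also converge to $T_0$, contradicting $\mu(T_0)=0$. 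Hence $\mu\in\{\mu_0,\mu_1\}$ and in either case $\lambda^c(\mu)<0$, completing the proof.
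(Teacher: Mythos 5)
Your proposal gets the setup right: the candidates $\mu_0,\mu_1$, the verification via (K4) that they are ergodic Gibbs $u$-states with negative center exponent, and (via Lemma~\ref{lem:trans-inters} and $u$-saturation of supports) the observation that any ergodic Gibbs $u$-state $\mu$ must have $T_0\subset\supp\mu$ or $T_1\subset\supp\mu$, with $\mu(T_i)=1$ forcing $\mu=\mu_i$. Up to there the argument is sound.

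The gap is in ruling out $\mu(T_0)=0$ while $T_0\subset\supp\mu$. You invoke an ``invariance-principle / Pesin-theoretic argument'' and assert that points near $T_0$ with negative center contribution ``asymptotically converge to $T_0$'' and that ``this forces $\mu$-generic orbits to also converge to $T_0$,'' but neither claim is justified. Having a negative $\theta$-derivative near $T_0$ for a while does not make an orbit converge to $T_0$; convergence happens only for the (positive Lebesgue measure, but far from all) points that actually land on Pesin stable manifolds of $T_0$-points with negative exponent. More importantly, there is no mechanism by which a positive Lebesgue measure set of converging points in some $u$-disk forces the ergodic measure $\mu$ (which a priori lives on $\supp\mu\setminus T_0$ and could even have positive center exponent) to concentrate on $T_0$. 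Ergodicity and $T_0\subset\supp\mu$ do not give positive-frequency visits in a way that controls the Lyapunov exponent. So the crucial case is left open.

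The paper avoids this case split entirely and in fact never shows $T_0\subset\supp\nu$. Instead: since $\nu$ is an ergodic Gibbs $u$-state, some $u$-disk $D$ has Lebesgue-a.e.\ point in $B(\nu)$; Lemma~\ref{lem:trans-inters} pushes $f^n(D)$ to cross $W^s_L(R_0)$, hence $f^n(D)$ accumulates on $W^u(R_0)$, which is dense in $T_0$; because $\lambda^c(\mu_0)<0$, a positive-measure set $D'_0\subset T_0$ has $2$-dimensional Pesin local stable manifolds of uniform size varying absolutely continuously; $f^{n_0}(D)$ eventually cuts $\bigcup_{X\in D'_0}W^s_{loc}(X)\subset B(\mu_0)$ in a set of positive Lebesgue measure, so $\leb(D\cap B(\mu_0))>0$, contradicting $\leb(D\setminus B(\nu))=0$. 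This is the piece of Pesin theory you need; without spelling it out your final paragraph does not close the argument.
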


\begin{proof}
This is a well-known result, and we write down its proof for completeness. Let $\mu_0$ and $\mu_1$ be the normalized  Lebesgue measures restricted to the invariant tori $T_0=\mathbb{T}^2\times\{0\}$ and $T_1=\mathbb{T}^2\times\{1\}$, respectively. Since $\mathbb{A}:\mathbb{T}^2\rightarrow\mathbb{T}^2$ is a hyperbolic automorphism, $\mu_0$ and $\mu_1$ are the unique ergodic Gibbs $u$-states supported on $T_0$ and $T_1$, respectively. Recall that the center Lyapunov exponents of $\mu_0$ and $\mu_1$ are both negative, according to $(K4)$, so they are both physical measures by Proposition~\ref{prop:gibbs ustate basic properties}. According to Proposition~\ref{prop:mcc equivalent property}, to show $f$ has mostly contracting center, it suffices to show that $f$ has no other ergodic Gibbs $u$-states.

By contradiction, assume there exists an ergodic Gibbs $u$-state $\nu\notin\{\mu_0,\mu_1\}$. Then there exists a $u$-disk $D$ which intersect $B(\nu)$ on a full Lebesgue measure subset $D_0\subset D$. Since $\nu$ is an invariant measure, this also works for $f^n(D)$. According to lemma \ref{lem:trans-inters}, there exists $n$ large enough, such that $f^n(D)\pitchfork W^s_L(R_0)\neq\varnothing$  or $f^n(D)\pitchfork W^s_L(S_1)\neq\varnothing$. Without loss of generality, we assume the first case happens. Again, since $\mu_0$ is an ergodic Gibbs $u$-state, there exists a $u$-disk $D'\subset T_0$ and its subset $D'_0$ with positive Lebesgue measure such that every $X\in D'_0$ belongs to $B(\mu_0)$ and has 2-dimensional Pesin local stable manifold $W^s_{loc}(X)$ with uniform size. Moreover, the Pesin local stable manifolds vary absolutely continuously according to the classical Pesin theory.

Since $W^u(R_0)$ is dense in $T_0$, and $f^n(D)$ is arbitrarily close to $W^u(R_0)$ as $n$ tends to infinity, there exists $n_0$ large enough such that $f^{n_0}(D)$ cuts $\cup_{X\in D'_0}W^s_{loc}(X).$ The intersection is contained in $B(\mu_0)$ (since $\cup_{X\in D'_0}W^s_{loc}(X)\subset B(\mu_0)$), and has positive Lebesgue measure by the absolute continuity of $\{W^s_{loc}(X)\}_{X\in D'_0}$. This means $D$ itself has a positive Lebesgue measure subset contained in $B(\mu_0)$, which contradicts with the fact that $D$ has a full Lebesgue measure subset $D_0$ such that $D_0\subset B(\nu)$.
\end{proof}

\begin{proposition}\label{prop:two physical measures}
Let $K_t$ and $\mathcal{U}_t$ be defined as in Proposition~\ref{prop:robust transitivity}. Then by shrinking $\mathcal{U}_t$ if necessary, for any $g\in\mathcal{U}_t\cap\diff^{1+}(\mathbb{T}^3)$, if $g$ has two physical measures, then $g\in\mathscr{K}^{1+}_l(\mathbb{T}^3)$.
\end{proposition}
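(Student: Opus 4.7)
My plan is to establish intermingling in three stages: (i) use mostly contracting center to single out a specific pair of physical measures associated to the index-$2$ fixed points of (K2), (ii) show that every $u$-disk meets both of their basins in positive one-dimensional Lebesgue measure, and (iii) transfer this to an arbitrary open set via Fubini along the unstable foliation.

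First, I combine Lemma~\ref{lem:kan is mcc} (Kan's examples have mostly contracting center) with Lemma~\ref{lem:openness of mcc} and shrink $\mathcal{U}_t$ so that every $g\in\mathcal{U}_t\cap\diff^{1+}(\mathbb{T}^3)$ has mostly contracting center. Proposition~\ref{prop:mcc equivalent property} then supplies finitely many ergodic physical measures for $g$, each a Gibbs $u$-state with negative center Lyapunov exponent and $u$-saturated support, with supports pairwise disjoint.

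The index-$2$ hyperbolic saddles $R_0=(r,0)$ and $S_1=(s,1)$ from (K2) persist as $R_{0,g},S_{1,g}$. Applying Proposition~\ref{prop:gibbs ustate basic properties}(1) to a tiny $u$-disk $D_0\subset W^u_{loc}(R_{0,g})$, every weak-$*$ accumulation of the Birkhoff averages of $m_{D_0}/m_{D_0}(D_0)$ is a Gibbs $u$-state whose closed $u$-saturated support contains $\overline{W^u(R_{0,g})}\ni R_{0,g}$. Ergodic decomposition together with Proposition~\ref{prop:mcc equivalent property} and pairwise disjointness of supports then isolate a physical measure $\nu^0_g$ with $R_{0,g}\in\supp(\nu^0_g)$; the symmetric argument at $S_{1,g}$ produces $\nu^1_g$ with $S_{1,g}\in\supp(\nu^1_g)$. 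Upon shrinking $\mathcal{U}_t$ further, the weak-$*$ upper semicontinuity of Gibbs $u$-states on mostly-contracting-center systems (combined with the fact that $\supp(\mu_0)=T_0\neq T_1=\supp(\mu_1)$ for $K_t$) confines $\supp(\nu^0_g),\supp(\nu^1_g)$ to disjoint small neighborhoods of $T_0,T_1$, so $\nu^0_g\neq\nu^1_g$. Under the hypothesis that $g$ has at least two physical measures, these two will be the pair witnessing $g\in\mathscr{K}^{1+}_l(\mathbb{T}^3)$ once intermingling of their basins is proved.

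To prove intermingling, I reduce via absolute continuity of the unstable foliation (Fubini over $u$-plaques of a local product neighborhood inside the open set) to showing that every $u$-disk $D$ satisfies $\leb_D(D\cap B(\nu^i_g))>0$ for $i=0,1$. I fix $D$ and treat $i=0$; the other case is symmetric. The $u$-length of $g^n(D)$ grows exponentially, so for $n$ large it exceeds $2L$ with $L$ as in Lemma~\ref{lem:trans-inters}; moreover, robust transitivity of $g$ (Proposition~\ref{prop:robust transitivity}) lets me arrange $g^n(D)$ to enter a prescribed small neighborhood of $R_{0,g}$, where the first alternative of Lemma~\ref{lem:trans-inters} is the one that can be realized. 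Lemma~\ref{lem:trans-inters} then yields a point $y\in g^n(D)$ with $W^{uu}_L(y)\pitchfork W^s_L(R_{0,g})\neq\varnothing$, and Palis' $\lambda$-lemma provides $m>0$ together with a $u$-subdisk $D'\subset g^{n+m}(D)$ that is $C^1$-close to a compact piece of $W^u_{loc}(R_{0,g})\subset\supp(\nu^0_g)$.

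The Gibbs-$u$-state machinery closes the argument: the iterates $g^i(D')$ accumulate in Hausdorff sense on $\overline{W^u(R_{0,g})}$, so every weak-$*$ accumulation $\mu^\infty$ of $\tfrac{1}{N}\sum_{i=0}^{N-1}g^i_*m_{D'}/m_{D'}(D')$ has $\supp(\mu^\infty)\subset\overline{W^u(R_{0,g})}\subset\supp(\nu^0_g)$; ergodic decomposition plus disjointness of supports of distinct physical measures forces every ergodic component of $\mu^\infty$ to coincide with $\nu^0_g$, hence $\mu^\infty=\nu^0_g$. By Proposition~\ref{prop:gibbs ustate basic properties}(4), Lebesgue-a.e.~$x\in D'$ lies in $B(\nu^0_g)$, and the absolute continuity of $g^{-(n+m)}$ combined with $g$-invariance of $B(\nu^0_g)$ pulls this back to $\leb_D(D\cap B(\nu^0_g))>0$, as desired.

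\textbf{Main obstacle.} The delicate point is guaranteeing access to \emph{both} stable manifolds $W^s(R_{0,g})$ and $W^s(S_{1,g})$ starting from an arbitrary $u$-disk $D$, since Lemma~\ref{lem:trans-inters} only provides one of the two alternatives per point. Combining that lemma with robust transitivity of $g$ to shuttle some forward iterate of $D$ close to each of the two index-$2$ saddles in turn is the key technical maneuver, and it is precisely here that the proof genuinely relies on both property (K2) of Kan's examples and the blender-based transitivity mechanism built in Section~\ref{sec:construction}.
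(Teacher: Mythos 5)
The central gap is in your intermingling step. You assert that ``robust transitivity of $g$ \ldots lets me arrange $g^n(D)$ to enter a prescribed small neighborhood of $R_{0,g}$.'' Transitivity controls intersections of iterates of \emph{open} sets; a $u$-disk $D$ is a one-dimensional, Lebesgue-null subset of $\mathbb{T}^3$, and transitivity of the map gives no control over where iterates of such a set go. In the very setting of Theorem~\ref{theo:two-pm} this is in fact false: the supports of $\mu_1,\mu_2$ turn out to be disjoint $u$-saturated invariant $su$-tori, so the unstable foliation is not minimal and a $u$-disk lying in one torus never approaches a saddle in the other; even a $u$-disk in the open region between the tori need not have iterates entering a given neighborhood merely because $g$ is transitive. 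Lemma~\ref{lem:trans-inters} only guarantees that a long $u$-arc meets \emph{one} of $W^s_L(R_{0,g})$, $W^s_L(S_{1,g})$; reaching the second one is exactly where a real mechanism is needed, and transitivity cannot supply it. You flag this as the ``main obstacle,'' but the resolution you propose does not address it.

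The paper's proof of Theorem~\ref{theo:two-pm} (of which Proposition~\ref{prop:two physical measures} is an immediate consequence) fills this gap with structure that your proposal skips. Two $\lambda$-lemma arguments --- Claim~\ref{claim:unique intersection} and the subsequent $s$-saturation claim --- show that each $\supp(\mu_i)$ meets every center circle in exactly one point and is both $s$- and $u$-saturated, hence is a topological torus transverse to the center foliation, with $\supp(\mu_1)\supset\overline{W^{uu}(R_{0,g})}$ and $\supp(\mu_2)\supset\overline{W^{uu}(S_{1,g})}$. Only once this torus picture is established does the intermingling argument go through: a $u$-curve disjoint from both tori has forward iterates meeting both $W^s_{loc}(R_{0,g})$ and $W^s_{loc}(S_{1,g})$, and then the Pesin/absolute-continuity step (which you also carry out correctly) and Fubini finish. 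Relatedly, your appeal to weak-$*$ upper semicontinuity of Gibbs $u$-states to ``confine $\supp(\nu^0_g),\supp(\nu^1_g)$ to small neighborhoods of $T_0,T_1$'' is also too weak: weak-$*$ convergence does not control supports, so this does not by itself yield disjointness or the torus geometry the argument actually needs. The mostly-contracting ingredients (Lemmas~\ref{lem:kan is mcc}, \ref{lem:openness of mcc}, Proposition~\ref{prop:mcc equivalent property}), the $\lambda$-lemma approximation of $W^u_{loc}(R_{0,g})$, and the Fubini reduction are all sound and coincide with the paper's, but they do not compensate for the unjustified transitivity step and the missing structural claims.
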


It is easy to see that Proposition~\ref{prop:two physical measures} can be deduced from Theorem~\ref{theo:two-pm} immediately. So we only have to prove Theorem~\ref{theo:two-pm}.

\begin{proof}[Proof of Theorem~\ref{theo:two-pm}]
We take the $C^1$-neighborhood ${\cal U}$ small enough, such that it satisfies Lemma \ref{lem:trans-inters} and every $g\in{\cal U}\cap\diff^{1+}(\mathbb{T}^3)$ has mostly contracting center. Moreover, we require that every $g\in{\cal U}\cap\diff^{1+}(\mathbb{T}^3)$ is still partially hyperbolic with uniformly compact center foliation.

Now assume $\mu_1$ and $\mu_2$ are two different ergodic Gibbs $u$-states of some $g\in{\cal U}\cap\diff^{1+}(\mathbb{T}^3)$. Then $\supp (\mu_1)$ and $\supp (\mu_2)$ are two disjoint compact $u$-saturated sets, according to Proposition~\ref{prop:mcc equivalent property}. We have the following claim.

\begin{claim}\label{claim:unique intersection}
For every $x\in\mathbb{T}^2$, $\bar{x}_g$ intersects $\supp (\mu_i)$ with exactly one point, for $i=1,2$.
\end{claim}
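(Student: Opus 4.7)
The plan is to identify $\supp(\mu_i)$ with one of the two Hirsch--Pugh--Shub continuations $T_{0,g}, T_{1,g}$ of the normally hyperbolic invariant tori $T_0=\mathbb{T}^2\times\{0\}$ and $T_1=\mathbb{T}^2\times\{1\}$ of $f$. These continuations are $C^1$-close to $T_0, T_1$ and in particular remain graphs over $\mathbb{T}^2$ transverse to the center foliation, so each fiber $\bar{x}_g$ meets $T_{0,g}$ (resp.\ $T_{1,g}$) in exactly one point, which is exactly what we want.

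For existence of the intersection, I would use the uniformly compact center foliation to form the quotient $\pi\colon\mathbb{T}^3\to\mathbb{T}^3/\mathcal{F}^c\cong\mathbb{T}^2$. This map semiconjugates $g$ to $\mathbb{A}$ and sends strong unstable leaves of $g$ to unstable leaves of $\mathbb{A}$, so $\pi_*\mu_i$ is an $\mathbb{A}$-invariant Gibbs $u$-state on $\mathbb{T}^2$; by uniqueness of the SRB of $\mathbb{A}$ it must equal Lebesgue. In particular $\pi(\supp\mu_i)=\mathbb{T}^2$, which gives $\bar{x}_g\cap\supp\mu_i\neq\varnothing$ for every $x\in\mathbb{T}^2$.

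For the ``exactly one point'' part, I would pick any $Y\in\supp\mu_i$. Lemma~\ref{lem:trans-inters} yields a transverse intersection of $W^{uu}_L(Y,g)$ with either $W^s_L(R_{0,g},g)$ or $W^s_L(S_{1,g},g)$; assume the former. Since $\supp\mu_i$ is closed, $g$-invariant, and $u$-saturated, the intersection point and its forward iterates all lie in $\supp\mu_i$, so the limit $R_{0,g}$ lies in $\supp\mu_i$. Then $u$-saturation gives $W^u(R_{0,g},g)\subset\supp\mu_i$, and since $R_{0,g}\in T_{0,g}$ and $g|_{T_{0,g}}$ is Anosov with dense unstable leaves, the closure of $W^u(R_{0,g},g)$ equals $T_{0,g}$; hence $T_{0,g}\subset\supp\mu_i$.

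To finish, observe that the Lebesgue measure $\tilde{\mu}$ on $T_{0,g}$ is the SRB of the Anosov $g|_{T_{0,g}}$, and because normal hyperbolicity forces $E^u\subset TT_{0,g}$, its ambient unstable conditionals coincide with those of $g|_{T_{0,g}}$, making $\tilde{\mu}$ an ergodic Gibbs $u$-state of $g$ with support $T_{0,g}$. Proposition~\ref{prop:mcc equivalent property} (distinct ergodic Gibbs $u$-states of a mostly contracting system have pairwise disjoint supports) then forces $\mu_i=\tilde{\mu}$, since $\supp\tilde{\mu}\cap\supp\mu_i\supset T_{0,g}\neq\varnothing$; hence $\supp\mu_i=T_{0,g}$, and the symmetric case produces $\supp\mu_i=T_{1,g}$. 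The main obstacle is this final identification step: in particular, checking that the Anosov SRB on $T_{0,g}$ is a genuine Gibbs $u$-state of the ambient system so that Proposition~\ref{prop:mcc equivalent property} applies, and leveraging the dichotomy in Lemma~\ref{lem:trans-inters} to guarantee that $\supp\mu_i$ must absorb either $R_{0,g}$ or $S_{1,g}$.
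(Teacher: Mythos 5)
Your proof rests on a false premise: the invariant tori $T_0=\mathbb{T}^2\times\{0\}$ and $T_1=\mathbb{T}^2\times\{1\}$ are \emph{not} normally hyperbolic, so Hirsch--Pugh--Shub theory does not furnish continuations $T_{0,g},T_{1,g}$ for a general $g$ near $f$. The normal (center) dynamics at a point $(x,0)\in T_0$ is multiplication by $\partial_\theta\phi(x,0)$, and condition (K2) forces this to be $<1$ at $x=r$ (sink) and $>1$ at $x=s$ (source): the normal bundle over $T_0$ is neither uniformly contracted nor uniformly expanded, so the tori are merely invariant, not normally hyperbolic submanifolds. What is normally hyperbolic is the center \emph{foliation} (this is Lemma~\ref{lem:openness of skew product}), which is a strictly weaker statement. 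Indeed the entire dichotomy of Theorem~\ref{theo:main theorem} hinges on the fact that under some perturbations one of these tori genuinely breaks (Propositions~\ref{prop:break invariant torus} and~\ref{prop:unique physical measure}, and the Corollary), so any argument that produces $T_{0,g},T_{1,g}$ unconditionally would contradict the robustly topologically mixing case.

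Your opening moves are fine: the quotient argument giving $\pi(\supp\mu_i)=\mathbb{T}^2$, and the deduction via Lemma~\ref{lem:trans-inters}, closedness, invariance and $u$-saturation that $R_{0,g}$ (or $S_{1,g}$) is absorbed into $\supp\mu_i$. But every step after that --- ``$\overline{W^u(R_{0,g},g)}=T_{0,g}$'', ``Lebesgue on $T_{0,g}$ is an ambient Gibbs $u$-state'' --- invokes the nonexistent torus, so the uniqueness of the fiber intersection, which is the whole content of the claim, is assumed rather than proved. Under the standing hypothesis of two distinct ergodic Gibbs $u$-states one can eventually show the tori do survive, but that must be \emph{deduced}, and the paper's logic runs in the opposite direction. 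It argues by contradiction: if $\bar{x}_g$ met $\supp\mu_1$ in two points $X_1,X_2$, slide along $W^{uu}(X_i,g)$ to two points $Z_1,Z_2\in\bar{z}_g\cap W^{ss}(\bar{r}_g,g)$; since $W^{ss}(R_{1,g},g)\cap\bar{z}_g$ is a singleton, at least one $Z_i$ lies in $W^s(R_{0,g},g)$, hence $W^{uu}(R_{0,g},g)\subset\supp\mu_1$; applying Lemma~\ref{lem:trans-inters} to any $Y\in\supp\mu_2$ forces $W^{uu}(R_{0,g},g)\subset\supp\mu_2$ as well (or the symmetric alternative with $S_{1,g}$), contradicting disjointness of the supports from Proposition~\ref{prop:mcc equivalent property}.
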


\begin{proof}[Proof of the claim]
Argue by contradiction. Assume that there exist $x\in\mathbb{T}^2$ and two points $X_1,X_2\in\bar{x}_g$, such that $X_1,X_2\in\supp (\mu_1)$. Notice that for any point $Y\in \supp (\mu_2)$, from Lemma \ref{lem:trans-inters}, we know that either $W_L^{uu}(Y,g)\pitchfork W_L^s(R_{0,g},g)\neq\varnothing$, or $W_L^{uu}(Y,g)\pitchfork W_L^s(S_{1,g},g)\neq\varnothing$. Without loss of generality, we assume the first case holds. Then by using of $\lambda$-lemma and the invariance of $\supp (\mu_2)$, we have $W^{uu}(R_{0,g},g)\subset \supp (\mu_2)$.

Since $X_1$ and $X_2$ are in the same center leaf, there exist $z\in\mathbb{T}^2$ and two points $Z_1,Z_2\in\bar{z}_g$, such that
\begin{displaymath}
Z_i\in W^{uu}(X_i,g)\cap W^{ss}(\bar{r}_g), \qquad i=1,2.
\end{displaymath}
However, noticing that $W^{ss}(R_{1,g},g)\cap\bar{z}_g$ consists of a unique point, we conclude that at least one of $Z_1$ and $Z_2$ belongs to $W^s(R_{0,g},g)$. Applying $\lambda$-lemma again, we have
$W^{uu}(R_{0,g},g)\subset \supp (\mu_1)$. This is a contradiction since $\supp (\mu_1)\cap \supp (\mu_2)=\varnothing$.
\end{proof}

From the proof of the claim, we can see that $\supp (\mu_i)$ can not intersects $W^s(R_{0,g},g)$ and $W^s(S_{1,g},g)$ simultaneously, where $i=1,2$. Therefore, following the same arguments as above, we can assume that $W^{uu}(R_{0,g},g)\subset \supp (\mu_1)$, and $W^{uu}(S_{1,g},g)\subset \supp (\mu_2)$.

Since $\supp (\mu_i)$ is compact, the map $x\in\mathbb{T}^2\mapsto\supp(\mu_i)\cap\bar{x}_g$ is lower semi-continuous, where $i=1,2$. However, Claim~\ref{claim:unique intersection} shows us each of the intersections in the maps consists of a unique point. This implies that the maps are both continuous. That is, $\supp (\mu_1)$ and $\supp (\mu_2)$ are two topological tori transverse to the center foliation. Combining with the assumption in the previous paragraph, we conclude
\begin{displaymath}
\overline{W^{uu}(R_{0,g},g)}=\supp (\mu_1), \qquad {\rm and} \qquad \overline{W^{uu}(S_{1,g},g)}=\supp (\mu_2).
\end{displaymath}

\begin{figure}[htbp]
\centering
\begin{overpic}{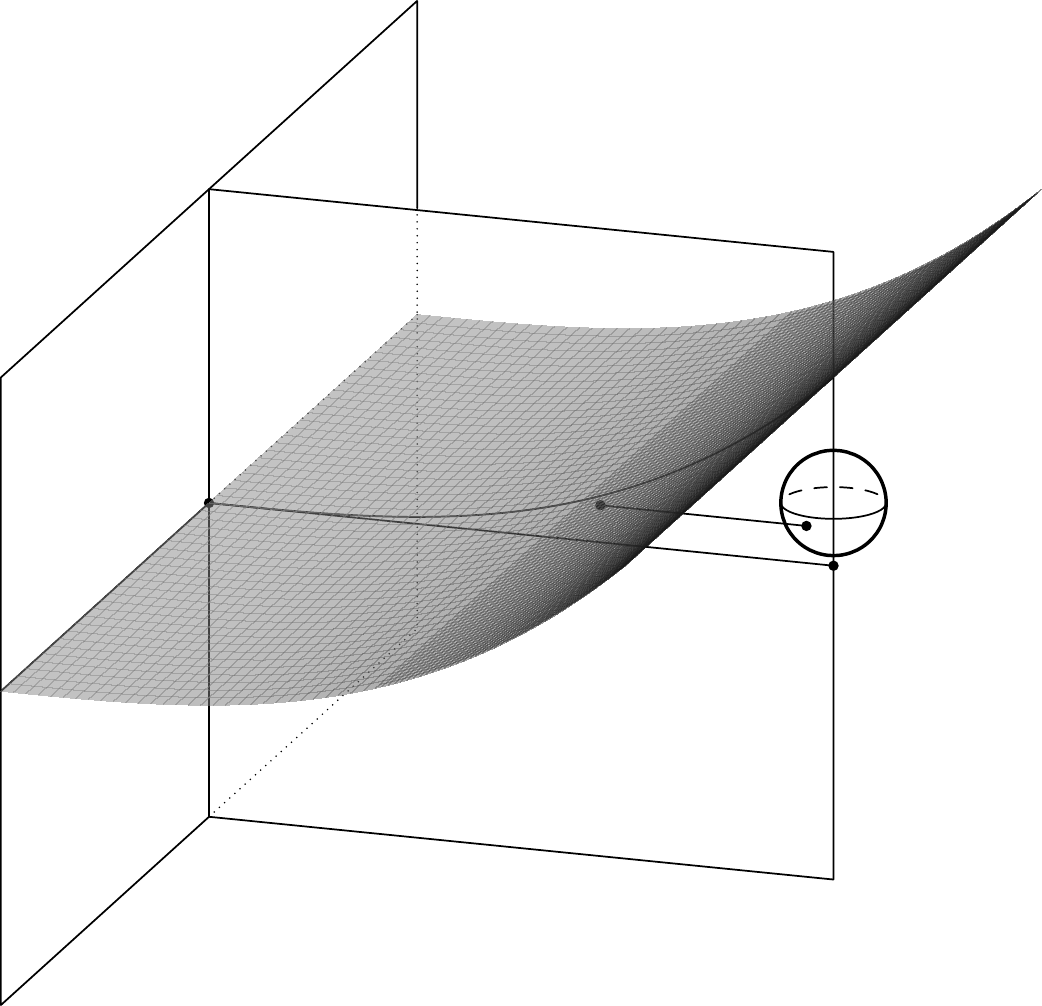}
\put(80,40){$Y_0$}
\put(85,50){$V$}
\put(20,50){$X_0$}
\put(76,47){$Y$}
\put(55,50){$X$}
\put(45,70){$W^c(W_{loc}^{ss}(X_0,g),g)$}
\put(30,85){$W^c(W_{loc}^{uu}(X_0,g),g)$}
\put(90,59){$\supp(\mu_1)$}
\put(90,60){\vector(-1,1){5}}
\put(51,53){$\sigma$}
\put(50,53){\vector(-1,-1){5}}
\end{overpic}
\caption{$\supp(\mu_1)$ is $s$-saturated.}
\end{figure}

\begin{claim}
Both $\supp (\mu_1)$ and $\supp (\mu_2)$ are $s$-saturated, thus they are $su$-tori.
\end{claim}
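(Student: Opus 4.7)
The plan is to prove $\supp(\mu_1)$ is $s$-saturated; the argument for $\supp(\mu_2)$ is entirely symmetric. Since $\supp(\mu_1)$ is compact and $g$-invariant, it suffices to show $W^{ss}_{loc}(X_0,g) \subset \supp(\mu_1)$ for every $X_0 \in \supp(\mu_1)$, and then extend to the entire strong stable leaf by iterating $g^{-n}$.

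Following the picture suggested by the figure, fix $X_0 \in \supp(\mu_1)$ and consider the $2$-dimensional surface $V := W^c(W^{ss}_{loc}(X_0,g), g)$ obtained by saturating the local strong stable manifold with the center foliation. Since $\supp(\mu_1)$ is a continuous torus transverse to the center foliation and projects homeomorphically to $\mathbb{T}^2$, the intersection $\sigma := V \cap \supp(\mu_1)$ is a continuous curve through $X_0$ that projects homeomorphically onto a small piece of $W^s_\mathbb{A}(x_0)$ in the base. For any $Y \in W^{ss}_{loc}(X_0,g)$ with base $y$, there is then a unique $Y^* \in \sigma$ with base $y$, and $Y$, $Y^*$ lie on the same center fiber $\bar{y}_g$. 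The problem reduces to showing $Y = Y^*$.

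Iterating forward, strong stable contraction gives $d(g^n X_0, g^n Y) \to 0$ exponentially at a rate bounded by $\|Dg|_{E^s}\|^n$. On the other hand $g^n X_0$ and $g^n Y^*$ both lie on the continuous graph $\supp(\mu_1) = \mathrm{graph}(h)$, and the base orbit $\mathbb{A}^n y$ is contracted to $\mathbb{A}^n x_0$ at rate $\lambda^{-n}$ along $W^s_\mathbb{A}$, so continuity of $h$ gives $d(g^n X_0, g^n Y^*) \to 0$. Hence $g^n Y, g^n Y^*$ lie on the same center fiber $\overline{\mathbb{A}^n y}_g$ with center-fiber distance tending to $0$.

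The main technical obstacle is to conclude $Y = Y^*$ from this forward convergence, since by property (K3) the backward center-fiber dynamics can expand by a factor up to $\|\mathbb{A}\|^n$, and the mere continuity of $h$ provides no quantitative rate for the convergence $d(g^n X_0, g^n Y^*) \to 0$. I plan to close this gap by appealing to the H\"older regularity of strong unstable holonomies for $C^{1+}$ partially hyperbolic diffeomorphisms, which, together with the $u$-saturation of $\supp(\mu_1)$, forces the graph function $h$ to be H\"older along the $\mathbb{A}$-stable direction via the local product structure. Combined with the domination inequality $\|Dg|_{E^s}\|\cdot\|Dg^{-1}|_{E^c}\| < 1$, this yields a decay rate for $d(g^n X_0, g^n Y^*)$ strictly faster than the backward center expansion, so pulling back the center-fiber distance $d_c(g^n Y, g^n Y^*) \to 0$ by $g^{-n}$ forces $d_c(Y, Y^*) = 0$. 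This gives $W^{ss}_{loc}(X_0,g) \subset \supp(\mu_1)$, and combined with the already established $u$-saturation, $\supp(\mu_1)$ is an $su$-torus as claimed.
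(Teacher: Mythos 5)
Your proposal takes a genuinely different route from the paper's, and it contains a gap that I do not think you can close with the tools you invoke.

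The paper argues by contradiction using the \emph{pair} of supports and the density of the stable manifolds of $R_{0,g}$ and $S_{1,g}$ coming from (K2). If $\supp(\mu_1)$ were not $s$-saturated, the strong stable holonomy of the curve $\sigma$ produces an interior point $Z_0$ outside $\supp(\mu_1)$ whose small neighbourhood $V$ has the property that every $Y\in V$ satisfies $W^{ss}(Y,g)\cap\supp(\mu_1)\neq\varnothing$ (this uses continuity of the graph plus $u$-saturation). Then $V$ meets both $W^s(R_{0,g},g)$ and $W^s(S_{1,g},g)$, and forward iteration forces \emph{both} $R_{0,g}$ and $S_{1,g}$ into $\supp(\mu_1)$; but Lemma~\ref{lem:trans-inters} and $u$-saturation force at least one of them into $\supp(\mu_2)$ as well, contradicting $\supp(\mu_1)\cap\supp(\mu_2)=\varnothing$. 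Note that the disjointness of the two supports and (K2) are load-bearing; your argument uses neither, which is already a warning sign, since you would be proving the stronger statement that any $u$-saturated invariant continuous graph is automatically $s$-saturated.

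The concrete gap is in the regularity step. H\"older regularity of strong unstable holonomies, together with $u$-saturation of $\supp(\mu_1)$, controls the modulus of continuity of the graph function $h$ \emph{along} $W^u_{\mathbb{A}}$-leaves. It gives nothing along $W^s_{\mathbb{A}}$-leaves, and the local product structure cannot transfer it: to compare $h(z)$ and $h(z')$ with $z'\in W^s_{\mathbb{A}}(z)$ you either move directly along the $s$-leaf (no control) or decompose through the heteroclinic point $[z,z']$, in which case the $u$-holonomy controls one leg and the other leg is again a pure $s$-displacement — precisely what you are trying to bound. In fact H\"older regularity of $h$ in the $s$-direction is morally equivalent to the $s$-saturation you are trying to prove, so this step is circular. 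Moreover, even granting some H\"older exponent $\alpha\in(0,1)$, your rate comparison needs $\lambda^{\alpha}>\|Dg^{-1}|_{E^c}\|$, which is strictly stronger than the domination $\lambda>\|Dg^{-1}|_{E^c}\|$ you cite, so the estimate does not close as written. I would recommend abandoning the quantitative graph-transform approach and using the paper's soft argument, which sidesteps regularity entirely.
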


\begin{proof}[Proof of the claim]
We only prove this claim for $\supp (\mu_1)$. Suppose $\supp (\mu_1)$ is not $s$-saturated, then there exist a point $X_0\in \supp (\mu_1)$ and a point $Y_0\in W^{ss}_{loc}(X_0,g)\cap \bar{y}_g$, such that $Y_0\notin \supp (\mu_1)$.  Since $\supp(\mu_1)$ is compact, there exists an open neighbourhood $U$ of $y$ in $\bar{y}_g$, such that $U\cap \supp (\mu_1)=\varnothing.$

Let $\sigma=W^c(W^{ss}_{loc}(X_0,g),g)\cap \supp (\mu_1)$, then it is a continuous segment. Now using the strong stable holonomy map, there exists a segment $l\subset \bar{y}_g$ such that $l=h^s_g(\sigma)$. Let $Z_0\in l\cap \mathscr{U}$ be an interior point of $l$. Now we can choose a sufficiently small neighbourhood $V$ of $Z_0$ in $\mathbb{T}^3$, such that for every point $Y\in V,$ there exists $X\in \supp (\mu_1)\cap W^{ss}(Y,g)$. This is because $\supp (\mu_1)$ is a $u$-saturated continuous surface.

Recall that both $W^s(R_{0,g},g)$ and $W^s(S_{1,g},g)$ are dense in $\mathbb{T}^3$, there exists two points $Y^0\in \mathscr{V}\cap W^s(R_{0,g},g)$, $Y^1\in \mathscr{V}\cap W^s(S_{1,g},g)$. So from $\lambda$-lemma, we know that both $R_{0,g}$ and $S_{1,g}$ belongs to $\supp (\mu_1)$. Since $\supp (\mu_2)$ is also $u$-saturated, so Lemma \ref{lem:trans-inters} and $\lambda$-lemma also implies at least one of $R_{0,g}$ and $S_{1,g}$ belongs to $\supp (\mu_2)$, which implies $\supp (\mu_1)\cap\supp (\mu_2)\neq\varnothing$.

This contradiction means $\supp (\mu_1)$ must be $s$-saturated.
\end{proof}

Now we move on to prove that the basins of $\mu_1$ and $\mu_2$ are intermingled. The ingredients of the proof are quite similar to Kan's examples, and we just sketch the main arguments.

Since $g$ also has mostly contracting center, the center Lyapunov exponent of $\mu_i$ is still negative. For any curve $\gamma$ which is transverse to the $E^{cs}_g$ direction and does not intersect with $\supp (\mu_1)$ nor $\supp (\mu_2)$, from Lemma \ref{lem:trans-inters}, up to take some forward iteration, that $\gamma$ crosses the local surfaces $W^s_{loc}(R_{0,g},g)$ and $W^s_{loc}(S_{1,g},g)$. Therefore,  for sufficient large $n$, $f^n(\gamma)$ will contain some disks sufficiently close to $\supp (\mu_1)$ and $\supp (\mu_2)$, and hence intersect with $B(\mu_1)$ and $B(\mu_2)$ both in a set of positive Lebesgue measure. Notice that $B(\mu_1)$ and $B(\mu_2)$ are both $g$-invariant, we conclude that $\gamma$ itself intersects with $B(\mu_1)$ and $B(\mu_2)$ both in a set of positive Lebesgue measure. Now the conclusion follows by using Fubini's Theorem.
\end{proof}

\subsection{Having a unique physical measure means mixing property}
In this subsection, we will prove that for the examples constructed in the previous section, if they admit a unique physical measure under perturbations, then the perturbed ones must be topologically mixing. Combined with Proposition \ref{prop:two physical measures}, the proof of Theorem~\ref{theo:main theorem} will be finished.

Recall that in the previous subsection, the perturbed systems remain to have two invariant tori. While in this subsection, we will show that if the perturbed systems admit a unique physical measure, then at least one of its invariant tori is broken. The following proposition provides us a characterization of the broken torus, which will be used later.

\begin{proposition}\label{prop:break invariant torus}
Let $K_t$ and $\mathcal{U}_t$ be defined as in Proposition~\ref{prop:robust transitivity}, $T$ be one of its two invariant tori, and $A\in\bar{a},B\in\bar{b}$ be any two hyperbolic periodic points contained in $T$. By shrinking $\mathcal{U}_t$ if necessary, assume that the continuation of $A$ and $B$ are well defined for any $g\in\mathcal{U}_t$. Then for any $g\in\mathcal{U}_t$,
\begin{itemize}
\item either $g$ admits an invariant torus $T_g$ which is $s,u$-saturated, and $T_g$ is close to $T$ in  Hausdorff metric,
\item or there exists $x\in W^u(a,\mathbb{A})\cap W^s(b,\mathbb{A})$ such that $W^{uu}(A_g,g)\cap\bar{x}_g\neq W^{ss}(B_g,g)\cap\bar{x}_g$.
\end{itemize}
\end{proposition}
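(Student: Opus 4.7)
The plan is a contrapositive argument: assume that for every $x\in W^u(a,\mathbb{A})\cap W^s(b,\mathbb{A})$ one has $W^{uu}(A_g,g)\cap\bar{x}_g = W^{ss}(B_g,g)\cap\bar{x}_g$, and from this construct an $s,u$-saturated $g$-invariant topological $2$-torus $T_g$ close to $T$ in Hausdorff metric. After replacing $A$ and $B$ by iterates if necessary, one may assume both are fixed by a common power of $K_t$, so that their continuations are fixed by the same power of $g$; the hypothesis then applies to their whole orbits.

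First I would define $T_g:=\overline{W^{uu}(\orb(A_g),g)}$, a closed $g$-invariant $u$-saturated set, and show it coincides with $\overline{W^{ss}(\orb(B_g),g)}$. Since $\mathbb{A}$ is Anosov with $a,b$ periodic, the intersection $W^u(a,\mathbb{A})\cap W^s(b,\mathbb{A})$ is dense in $\mathbb{T}^2$; lifting this density through the center foliation, the subset of $W^{uu}(\orb(A_g),g)$ whose base projection lies in $W^u(a,\mathbb{A})\cap W^s(b,\mathbb{A})$ is dense in $W^{uu}(\orb(A_g),g)$. The standing assumption places this subset inside $W^{ss}(\orb(B_g),g)$, so $T_g\subseteq\overline{W^{ss}(\orb(B_g),g)}$; the reverse inclusion is symmetric, and consequently $T_g$ is both $s$- and $u$-saturated.

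Next I would upgrade $T_g$ to a topological $2$-torus close to $T$. Using Lemma~\ref{lem:openness of skew product} I pass to a leaf-conjugate skew product $g^{\ast}$ over $\mathbb{A}$ in which center fibers are the vertical circles $\{x\}\times\mathbb{S}^1$, and in which $W^{uu}(A_g^{\ast},g^{\ast})$ projects onto $W^u(a,\mathbb{A})$. On any compact piece of this unstable leaf near $a$ the perturbed manifold is $C^1$-close to $W^{uu}(A,K_t)=W^u(a,\mathbb{A})\times\{\theta_T\}\subset T$, and in particular is a graph over its projection. Joint continuity of strong (un)stable holonomies combined with the $s,u$-saturation shown above should allow one to propagate this local graph structure: any fibrewise accumulation point of $T_g$ over a base point $x$ must lie in both $\overline{W^{uu}(\orb(A_g),g)}$ and $\overline{W^{ss}(\orb(B_g),g)}$, and near $T$ the local graph property forces $T_g\cap\bar{x}_g$ to consist of a single point $\sigma_g(x)$ depending continuously on $x$. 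The graph of such a continuous section $\sigma_g:\mathbb{T}^2\to\mathbb{S}^1$ is an embedded $2$-torus, and Hausdorff proximity $T_g\approx T$ follows from the $C^0$-closeness $\sigma_g\approx\theta_T$ inherited from the $C^1$-closeness of the invariant foliations of $g^{\ast}$ and $K_t$ near $T$.

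The main obstacle is this last step: ruling out that $T_g$ is a ``fractal'' $s,u$-saturated invariant set rather than an honest continuous graph over $\mathbb{T}^2$. Globally the strong unstable leaf $W^{uu}(A_g,g)$ may wind through $\mathbb{T}^3$ and a priori meet a given fiber $\bar{x}_g$ in many points, while the hypothesis only constrains its values over the (dense) subset $W^u(a)\cap W^s(b)$. The key will be to exploit simultaneously the $u$-saturation coming from $A_g$ and the $s$-saturation coming from $B_g$: any putative second sheet over some $x$ would be transported by both foliations and, together with normal hyperbolicity of $T$ for $K_t$ and the $C^1$-persistence of the local graph property, would violate the uniqueness of the center-stable and center-unstable plaques near $T$, thus yielding the desired contradiction and completing the construction of $T_g$.
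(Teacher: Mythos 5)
Your contrapositive setup and the first part of the argument match the paper: you correctly reduce to showing that, under the standing assumption, the closure $\overline{W^{uu}(\orb(A_g),g)}$ coincides with $\overline{W^{ss}(\orb(B_g),g)}$ and hence is a closed, $g$-invariant, $s$- and $u$-saturated set. The density of $W^u(a,\mathbb{A})\cap W^s(b,\mathbb{A})$ in $\mathbb{T}^2$ is used in the same way.

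However, there is a genuine gap in the second half. You correctly flag the crucial step --- showing that $T_g$ meets each fiber $\bar y_g$ in exactly one point, i.e.\ ruling out a ``fractal'' $s,u$-saturated set --- but you do not actually prove it, and the heuristic you offer (uniqueness of center-stable/center-unstable plaques near $T$ plus normal hyperbolicity of $T$) does not work as stated. The closure $T_g$ is not a priori confined to a neighbourhood of $T$, so local plaque uniqueness near $T$ has no purchase on a putative second sheet of $T_g$ over some $y$ that sits far from $T$; nor does the $C^1$-closeness of local invariant manifolds near $A_g$ control the global closure. What is needed is an argument that brings the hypothesis itself to bear on an arbitrary fiber $\bar y_g$, and this is exactly what the paper supplies: given two points $Y^1,Y^2\in T_g\cap\bar y_g$ at center distance $\delta>0$, one chooses approximating points $X^i_N$ over base points $x^i_N\in W^u(a)\cap W^s(b)$ close to $y$, takes the transverse intersection $z\in W^u_{loc}(x^1_N)\cap W^s_{loc}(x^2_N)$ (which still lies in $W^u(a)\cap W^s(b)$), and uses the continuity/absolute continuity of the strong stable and strong unstable holonomies to produce points $Z^1\in W^{uu}(A_g,g)\cap\bar z_g$ and $Z^2\in W^{ss}(B_g,g)\cap\bar z_g$ with $\dd_c(Z^1,Z^2)>\delta/2$. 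This contradicts the assumption at $z$. Your proposal would be complete if you replaced the plaque-uniqueness heuristic by this heteroclinic-point argument; as written it asserts the conclusion of the key claim without a proof.
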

\begin{remark}
If the first case happens, we call $T_g$ the \emph{continuation} of $T$. And if the second case happens, we say that the invariant torus $T$ is \emph{broken} under $g$.
\end{remark}
\begin{proof}
Assume that for every $x\in W^u(a,\mathbb{A})\cap W^s(b,\mathbb{A})$, we have $W^{uu}(A_g,g)\cap\bar{x}_g= W^{ss}(B_g,g)\cap\bar{x}_g$, we will prove that $g$ admits an invariant torus $T_g$ which is $s,u$-saturated and close to $T$ in  Hausdorff metric.

Denote $\Lambda_g=\overline{\{W^{uu}(A_g,g)\cap\bar{x}_g:x\in W^u(a,\mathbb{A})\cap W^s(b,\mathbb{A})\}}.$ For any $y\in\mathbb{T}^2$, we claim that $\#(\Lambda_g\cap \bar{y}_g)=1.$ Argue by contradiction, we suppose that there exists a point $y_0\in\mathbb{T}^2$, such that $\Lambda_g\cap\bar{y}_{0,g}$ contains at least two points, namely $Y^1$ and $Y^2$. By definition, there exist two sequence of points $\{x_n^i\in W^u(a,\mathbb{A})\cap W^s(b,\mathbb{A})\}_{n>0}(i=1,2)$ and $X_n^i\in\bar{x}_{n,g}^i\cap\Lambda_g$ such that $X_n^i$ tends to $Y^i$, $i=1,2$.
Suppose the center distance of $Y^1$ and $Y^2$ is bigger than some positive number, say $\delta$. We now choose $N\in\mathbb{N}$ large enough such that $\dd(X_N^i,Y^i)$ is much smaller that $\delta$, $i=1,2$. Then the distance of $x_N^1$ and $x_N^2$ in $\mathbb{T}^2$ is much smaller than $\delta$, and we can thus assume that there exists a point $z\in W^u_{loc}(x_N^1)\cap W^s_{loc}(x_N^2)$. Then using leaf conjugacy property, we see $W^{uu}_{loc}(\bar{x}_{N,g}^1,g)\cap W^{ss}_{loc}(\bar{x}_{N,g}^2,g)=\bar{z}_g.$ Due to the absolute continuity of the strong stable and strong unstable holonomy of $g$, there exist two points $Z^1,Z^2\in\bar{z}_g$ satisfying
\begin{itemize}
\item[(1)] $\{Z^1\}=W^{uu}_{loc}(X_{N,g}^1,g)\cap\bar{z}_g=W^{uu}(A_g,g)\cap\bar{z}_g,$
\item[(2)] $\{Z^2\}=W^{ss}_{loc}(X_{N,g}^2,g)\cap\bar{z}_g=W^{ss}(B_g,g)\cap\bar{z}_g,$
\item[(3)] $\dd_c(Z^1,Z^2)>\frac{\delta}{2}.$
\end{itemize}
This implies that $W^{uu}(A_g,g)\cap\bar{z}_g\neq W^{ss}(B_g,g)\cap\bar{z}_g$, which contradicts with our assumption. This contradiction implies our claim is true, i.e., $\#(\Lambda_g\cap \bar{y}_g)=1$ for every $y\in\mathbb{T}^2$. This means $\Lambda_g$ is actually a topological torus transverse to the center foliation, and we denote it by $T_g$ from now on. Bearing in mind $T_g$ contains a dense subset of $W^{uu}(A_g,g)$(resp. $W^{ss}(B_g,g)$), we get $T_g\supset W^{uu}(A_g,g)$(resp. $T_g\supset W^{ss}(B_g,g)$). Hence, $T_g$ is $u$-minimal(resp. $s$-minimal) and $u$-saturated(resp. $s$-saturated). $T_g$ is close to $T$ with respect to Hausdorff metric, because $\overline{W^{uu}(A_g,g)}$ varies lower semi-continuously as $g$ changes.
\end{proof}

Now we are ready to prove the main proposition of this subsection.
\begin{proposition}\label{prop:unique physical measure}
Let $K_t$ and $\mathcal{U}_t$ be defined as in Proposition~\ref{prop:robust transitivity}. Then by shrinking $\mathcal{U}_t$ if necessary, for any $g\in\mathcal{U}_t\cap\diff^{1+}(\mathbb{T}^3)$, if $g$ has a unique physical measure, then $g$ is robustly topologically mixing.
\end{proposition}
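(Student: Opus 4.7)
The plan is to combine two observations: first, that having a unique physical measure forces at least one of the two invariant tori $T_0=\mathbb{T}^2\times\{0\}$ and $T_1=\mathbb{T}^2\times\{1\}$ of $K_t$ to be broken under $g$; and second, that this broken-torus condition together with the robust blenders forces topological mixing on an entire $C^1$-neighborhood of $g$.

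For the first step, I would argue by contrapositive, reusing the ingredients behind Theorem~\ref{theo:two-pm}. If both $T_0$ and $T_1$ persisted as $s,u$-saturated $g$-invariant tori, then the normalized Lebesgue measure on each would be a $g$-invariant ergodic Gibbs $u$-state whose center Lyapunov exponent is still negative (by $C^1$-openness of condition~(K4)), hence a physical measure by Proposition~\ref{prop:gibbs ustate basic properties}. This would produce two distinct physical measures of $g$, contradicting uniqueness. Thus at least one torus, say $T_1$, is broken, and Proposition~\ref{prop:break invariant torus} applied to $T_1$ with any two hyperbolic fixed points $R_1,S_1\in T_1$ yields a point $x\in W^u(r,\mathbb{A})\cap W^s(s,\mathbb{A})$ with
$$W^{uu}(R_{1,g},g)\cap\bar x_g \;\neq\; W^{ss}(S_{1,g},g)\cap\bar x_g.$$
This holonomy discrepancy is a $C^1$-open condition on $g$, which will deliver the robustness of the conclusion.

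The second step uses the blenders. As in the proof of Proposition~\ref{prop:robust transitivity}, the $cu$-blenders $(\Lambda_t,g^{2n_0})$ and $(\Lambda'_t,g^{2n_0})$ persist and are robustly activated by $W^u(Q_g,g)$ and $W^u(Q'_g,g)$, yielding
$$\overline{W^s(\orb(P_g,g))}=\overline{W^s(\orb(P'_g,g))}=\mathbb{T}^3$$
and symmetrically for the unstable manifolds, for every $g$ in a $C^1$-neighborhood of $K_t$. Given nonempty open sets $U$ and $V$, I would find $N$ such that for every $n\geq N$, a forward iterate of $U$ contains a vertical disk in the superposition region of one of the two blenders, while a suitable backward iterate of a vertical disk inside $V$ lies close enough to $W^s(P_g,g)$ or $W^s(P'_g,g)$ that the $\lambda$-lemma forces $g^n(U)\cap V\neq\varnothing$.

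The main obstacle is to upgrade the transitivity conclusion (``$g^n(U)\cap V\neq\varnothing$ for infinitely many $n$'') to mixing (``for all sufficiently large $n$''). The broken-torus hypothesis is exactly what rules out the period-$2$ obstruction that makes $K_t$ itself non-mixing: if both $T_0,T_1$ persisted, then $g^2$ would preserve each component of $\mathbb{T}^3\setminus(T_{0,g}\cup T_{1,g})$ while $g$ would swap them. With $T_1$ broken, the holonomy discrepancy produces, after iteration, $u$-disks that cross freely between what used to be the two hemispheres, so that $g^n(U)$ meets the superposition region of both blenders for all sufficiently large $n$ of either parity. Combined with the robust density of the invariant manifolds of the reference saddles, this yields topological mixing; and the $C^1$-openness of every invoked property (broken torus, blenders, transitivity, density of invariant manifolds) delivers the robustness.
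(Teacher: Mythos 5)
Your first step is sound and matches the paper: if both invariant tori had $s,u$-saturated continuations, each would carry an ergodic Gibbs $u$-state with negative center exponent, hence a physical measure, contradicting uniqueness; so at least one torus is broken, Proposition~\ref{prop:break invariant torus} gives a holonomy mismatch, and the $C^1$-openness of this mismatch is what will deliver robustness. This is exactly how the paper begins.

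The gap is in the second step. The blender argument of Proposition~\ref{prop:robust transitivity} only gives $\overline{W^s(\orb(P_g),g)}=\mathbb{T}^3$, which is density of the stable set of the period-two orbit $\{P_g,P'_g\}$; it does \emph{not} give $\overline{W^s(P_g,g^2)}=\mathbb{T}^3$, which is what the mixing criterion actually requires. (Indeed for $K_t$ itself, $\overline{W^s(P,K_t^2)}=\mathbb{T}^2\times[0,1]\neq\mathbb{T}^3$, which is precisely why $K_t$ fails to be mixing.) You correctly name this as ``the main obstacle,'' but the resolution you offer --- ``the holonomy discrepancy produces $u$-disks that cross freely\dots so that $g^n(U)$ meets the superposition region of both blenders for all sufficiently large $n$ of either parity'' --- is an assertion, not an argument, and it is not obvious. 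What is missing is the concrete mechanism: the broken-torus dichotomy, applied to the pair $P_0,Q_0$ (not $R_1,S_1$ --- one really wants a point of $\bar p$, whose adjacent center arcs lie in $W^u(P_g,g^2)$ and $W^u(P'_g,g^2)$, paired with a point of $\bar q$, which feeds into the blenders), produces a transverse intersection $W^{ss}(Q_{0,g},g)\pitchfork W^u(P'_g,g^2)\neq\varnothing$ (or with $W^u(P_g,g^2)$). The $\lambda$-lemma then gives $\overline{W^s(Q_{0,g},g)}\supset W^s(P'_g,g^2)$, and combined with the blender inclusions $W^s(Q_g,g^2)\subset\overline{W^s(P_g,g^2)}$, $W^s(Q'_g,g^2)\subset\overline{W^s(P'_g,g^2)}$ and $W^s(Q_{0,g},g)\subset\overline{W^s(Q_g,g)}$, one gets the chain
$\overline{W^s(P_g,g^2)}\supset\overline{W^s(Q_g,g^2)}\supset\overline{W^s(Q_{0,g},g)}\supset\overline{W^s(P'_g,g^2)}\supset\overline{W^s(Q'_g,g^2)}$,
which forces $\overline{W^s(P_g,g^2)}\supset\overline{W^s(\bar q_g,g)}=\mathbb{T}^3$. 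An analogous argument with the pair $P_0,S_0$ gives $\overline{W^u(P_g,g^2)}=\mathbb{T}^3$, and density of both invariant manifolds of the $g^2$-fixed point $P_g$ yields mixing. This chain of inclusions is what makes ``crossing between hemispheres'' rigorous, and without it the step from transitivity to mixing is not established.
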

\begin{proof}
By shrinking $\mathcal{U}_t$ if necessary, suppose that the continuations of $R_0,S_1,P_0,Q_0,P_1,$ and $Q_1$ are all well defined.
We first claim that at least one of the two invariant tori of $K_t$ is broken under $g$. Otherwise, the two invariant tori $T_1\ni R_0$ and $T_2\ni S_1$ both have a continuation, denoted by $T_{1,g}$ and $T_{2,g}$ respectively. Since $T_{i,g}$ are both $u$-saturated and disjoint to each other, each of them support a Gibbs $u$-state, which is also a physical measure. This contradicts with the assumption that $g$ admits a unique physical measure.

Suppose now the invariant torus $T:=\mathbb{T}^2\times\{0\}$ is broken. To prove $g$ is topological mixing, we will prove both $W^s(P_g,g^2)$ and $W^u(P_g,g^2)$ are dense in $\mathbb{T}^3$.

Now that $T$ is broken, for $P_0$ and $Q_0$, there exists a point $x\in W^u(p,\mathbb{A})\cap W^s(q,\mathbb{A})$ such that $W^{uu}(P_{0,g},g)\cap\bar{x}_g\neq W^{ss}(Q_{0,g},g)\cap\bar{x}_g$. Then $W^s(Q_{0,g},g)=W^{ss}(Q_{0,g},g)$ intersects transversely with either $W^u(P_g,g^2)$ or $W^u(P'_g,g^2)$. We suppose $W^s(Q_{0,g},g)\cap W^u(P'_g,g^2)\neq\varnothing.$ Then $W^s(P'_g,g^2)\subset\overline{W^s(Q_{0,g},g)}$, due to Palis' $\lambda$-lemma. Recall that the existence of blender-horseshoes implies $W^s(Q_{g},g^2)\subset \overline{W^s(P_g,g^2)}$ and $W^s(Q'_{g},g^2)\subset \overline{W^s(P'_g,g^2)}$. Noting that $W^s(Q_{0,g},g)\subset \overline{W^s(Q_{g},g)}$, we conclude that
\begin{displaymath}
\overline{W^s(P_g,g^2)}\supset\overline{W^s(Q_g,g^2)}\supset\overline{W^s(Q_{0,g},g)}\supset\overline{W^s(P'_g,g^2)}\supset\overline{W^s(Q'_g,g^2)}.
\end{displaymath}
Therefore,
\begin{displaymath}
\overline{W^s(P_g,g^2)}\supset\overline{W^s(Q_{g},g^2)}\cup\overline{W^s(Q'_{g},g^2)}=\overline{W^s(\bar{q}_g,g)}=\mathbb{T}^3.
\end{displaymath}
Hence $\overline{W^s(P'_g,g^2)}=\overline{g(W^s(P_g,g^2))}=\mathbb{T}^3$.

Again, since $T$ is broken under $g$, for $P_0$ and $S_0$, there exists a heteroclinic point $y\in W^u(p,\mathbb{A})\cap W^s(s,\mathbb{A})$ such that $W^{uu}(P_{0,g},g)\cap\bar{y}_g\neq W^{ss}(S_{0,g},g)\cap\bar{y}_g$. Then $W^s(S_{0,g},g)$ intersects transversely with either $W^u(P_g,g^2)$ or $W^u(P'_g,g^2)$. We suppose $W^s(S_{0,g},g)\cap W^u(P_g,g^2)\neq\varnothing.$  And again, using Palis' $\lambda$-lemma, we get $W^u(S_{0,g},g)\subset\overline{W^u(P_g,g)}$, which immediately implies $\overline{W^u(P_g,g)}=\mathbb{T}^3.$

To show that $g$ is robustly mixing, it is sufficient to notice that if the invariant torus $T$ of $K_t$ is broken under $g$, then it is robustly broken.
\end{proof}

Now we can prove the corollary.

\begin{proof}[Proof of Corollary]
Fix some $t_0>0$ small enough. From the proof of Proposition \ref{prop:break invariant torus}, we see that we can take some sooth perturbation $g$ of $K_{t_0}$ such that one $su$-torus of $K_{t_0}$ has a continuation, and the other one is broken. Then $g$ is robustly mixing, according to Proposition~\ref{prop:unique physical measure}. However, the existence of an $su$-torus implies that neither the strong stable foliation nor the strong unstable foliation of $g$ is minimal.
\end{proof}

\begin{remark}
For the case where we require the diffeomorphism to preserve the orientation of the center foliation(see Remark~\ref{remark}), it is a little bit tricky. We just sketch the main ingredients of the proof. According to equation~\ref{equation:hat f}, for any $t>0$, $\hat{f}_t$ is a smooth function on $\mathbb{T}^3$ preserving the orientation of the center foliation. We now take $g$ to be a smooth perturbation of some $\hat{f}_t$ such that
\begin{itemize}
\item $g^{-1}\circ \hat{f}_t$ is a skew product diffeomorphism over $\id_{\mathbb{T}^2}:\mathbb{T}^2\rightarrow\mathbb{T}^2$;
\item $g$ preserves the orientation of the center foliation;
\item $g^{-1}\circ \hat{f}_t$ is supported on a small neighbourhood of the invariant torus $\mathbb{T}^2\times\{0\}$ of $\hat{f}_t$;
\item $W^s(Q_{0,g},g)\pitchfork W^u(P'_g,g)\neq\varnothing$ and $W^s(S_{0,g},g)\pitchfork W^u(P_g,g)\neq\varnothing$.
\end{itemize}
Now by the same arguments as in Proposition~\ref{prop:unique physical measure}, we can show that both $W^u(P_g,g)$ and $W^s(P_g,g)$ are robustly dense in $\mathbb{T}^3$. Hence $g$ is robustly topologically mixing. However, since $g$ still admits an invariant $su$-torus $\mathbb{T}^2\times\{1\}$, we know that neither the strong stable foliation nor the strong unstable foliation of $g$ is minimal.
\end{remark}

\noindent Cheng Cheng, School of Mathematical Sciences, Peking University, Beijing 100871, China\\
E-mail address: chcheng@pku.edu.cn
\vspace{0.5cm}

\noindent Shaobo Gan, School of Mathematical Sciences, Peking University, Beijing 100871, China\\
E-mail address: gansb@pku.edu.cn
\vspace{0.5cm}

\noindent Yi Shi, School of Mathematical Sciences, Peking University, Beijing 100871, China\\
E-mail address: shiyi@math.pku.edu.cn

\end{document}